\documentclass{amsart}
\usepackage[margin=1in]{geometry}
\usepackage{amsmath, amsthm, amssymb, color, url}
\usepackage[final]{showkeys}

\providecommand{\url}[1]{\url{#1}}

\newcommand{\CC}{\mathbb{C}}

\newcommand{\bb}{\mathfrak{b}}
\newcommand{\HH}{\mathcal{H}}
\newcommand{\ZZ}{\mathbb{Z}}

\newcommand{\ux}{\underline{x}}

\newcommand{\gt}{\mathsf{gt}}
\newcommand{\sgt}{\mathsf{sgt}}
\newcommand{\wx}{\wtilde{x}}
\newcommand{\wpsi}{\wtilde{\psi}}
\newcommand{\wD}{\wtilde{D}}
\newcommand{\wC}{\mathsf{C}}
\newcommand{\bbmu}{\boldsymbol{\mu}}
\newcommand{\wbbmu}{\widetilde{\bbmu}}

\newcommand{\wmu}{\wtilde{\mu}}
\newcommand{\wnu}{\wtilde{\nu}}
\newcommand{\wlambda}{\wtilde{\lambda}}
\newcommand{\ulambda}{\underline{\lambda}}
\newcommand{\wulambda}{\wtilde{\ulambda}}
\newcommand{\bbsigma}{\boldsymbol{\sigma}}
\newcommand{\bbeta}{\boldsymbol{\eta}}
\newcommand{\bbxi}{\boldsymbol{\xi}}
\newcommand{\blambda}{\bar{\lambda}}
\newcommand{\btau}{\bar{\tau}}
\newcommand{\bmu}{\bar{\mu}}
\newcommand{\bnu}{\bar{\nu}}

\newcommand{\wtau}{\wtilde{\tau}}
\newcommand{\utau}{\underline{\tau}}
\newcommand{\wutau}{\wtilde{\utau}}
\newcommand{\wrho}{\wtilde{\rho}}
\newcommand{\wPhi}{\wtilde{\Phi}}
\newcommand{\wPsi}{\wtilde{\Psi}}

\newcommand{\id}{\text{id}}

\newcommand{\Tr}{\text{Tr}}
\newcommand{\Sym}{\text{Sym}}

\newcommand{\wt}{\text{wt}}

\newcommand{\eps}{\varepsilon}

\newcommand{\one}{\mathbf{1}}

\newcommand{\wtilde}{\widetilde}

\newcommand{\Res}{\text{Res}}
\newcommand{\wRes}{\wtilde{\Res}}
\newcommand{\bRes}{\mathsf{Res}}

\newcommand{\End}{\text{End}}

\newcommand{\gl}{\mathfrak{gl}}

\theoremstyle{definition}
\newtheorem{theorem}{Theorem}[section]

\newtheorem{corr}[theorem]{Corollary}
\newtheorem{lemma}[theorem]{Lemma}
\newtheorem{prop}[theorem]{Proposition}
\newtheorem*{remark}{Remark}

\usepackage{hyperref}
\hypersetup{
    colorlinks,
    citecolor=blue,
    filecolor=black,
    linkcolor=black,
    urlcolor=black
}

\begin{document}

\title{A representation-theoretic proof of the branching rule for Macdonald polynomials}
\author{Yi Sun}
\date{\today}
\email{yisun@math.mit.edu}

\begin{abstract}
We give a new representation-theoretic proof of the branching rule for Macdonald polynomials using the Etingof-Kirillov Jr. expression for Macdonald polynomials as traces of intertwiners of $U_q(\gl_n)$ given in \cite{EK}.  In the Gelfand-Tsetlin basis, we show that diagonal matrix elements of such intertwiners are given by application of Macdonald's operators to a simple kernel.  An essential ingredient in the proof is a map between spherical parts of double affine Hecke algebras of different ranks based upon the Dunkl-Kasatani conjecture of \cite{Dun, Eno, ES, Kas}.
\end{abstract}

\maketitle

\tableofcontents

\section{Introduction}

The Macdonald polynomials $P_\lambda(x; q, t)$ are a two-parameter family of symmetric polynomials indexed by partitions $\lambda$ which form an orthogonal basis for the ring of symmetric functions with respect to a $(q, t)$-deformation of the standard inner product.  They were originally introduced by Macdonald (see \cite{Mac}) as a generalization of many known families of special functions, including Schur functions, Jack and Hall-Littlewood polynomials, and Heckman-Opdam hypergeometric functions.  Macdonald proved a branching rule for the $P_\lambda(x; q, t)$ and conjectured three additional symmetry, evaluation, and norm identities collectively known as Macdonald's conjectures.  These conjectures were proven by Cherednik using techniques from double affine Hecke algebras in \cite{Che2}.  Etingof and Kirillov~Jr. realized the Macdonald polynomials in \cite{EK} in terms of traces of intertwiners of the quantum group $U_q(\gl_n)$; using this interpretation, they gave new proofs of Macdonald's conjectures in \cite{EK2}.  

The purpose of this paper is to give a representation-theoretic proof and interpretation of Macdonald's branching rule from the perspective of quantum groups.  We give a new expression for diagonal matrix elements of $U_q(\gl_n)$-intertwiners in the Gelfand-Tsetlin basis as the application of Macdonald's difference operators to a simple kernel.  We then show that the resulting summation expression for $P_\lambda(x; q, t)$ becomes Macdonald's branching rule after a summation by parts procedure. A key ingredient which is of independent interest is the construction of a map $\Res_l$ between spherical parts of double affine Hecke algebras of different ranks.  Our construction makes essential use of the Dunkl-Kasatani conjecture stated in \cite{Dun, Kas} and proven in \cite{Eno, ES} and is compatible with Cherednik's $SL_2(\ZZ)$-action on spherical DAHA.

In the remainder of the introduction, we summarize our motivations, give precise statements of our results, and explain how they relate to other recent work.

\subsection{Macdonald polynomials}

Let $\rho = \left(\frac{n-1}{2}, \ldots, \frac{1 - n}{2}\right)$ and let $e_r$ denote the elementary symmetric polynomial. For a partition $\lambda$, the Macdonald polynomial $P_\lambda(x; q^2, t^2)$ is the joint polynomial eigenfunction with leading term $x^\lambda$ and eigenvalue $e_r(q^{2\lambda} t^{2\rho})$ of the operators
\[
D_{n, x}^r(q^2, t^2) = t^{r(r-n)} \sum_{|I| = r} \prod_{i \in I, j \notin I} \frac{t^2 x_i - x_j}{x_i - x_j} T_{q^2, I},
\]
where $T_{q^2, I} = \prod_{i \in I} T_{q^2, i}$ and $T_{q^2, i}f(x_1, \ldots, x_n) = f(x_1, \ldots, q^2 x_i, \ldots, x_n)$ so that we have
\[
D_{n, x}^r(q^2, t^2) P_\lambda(x, q^2, t^2) = e_r(q^{2\lambda}t^{2\rho}) P_\lambda(x, q^2, t^2).
\]
Note that our normalization of $D_{n, x}^r(q^2, t^2)$ differs from that of \cite{Mac}.  An integral signature $\lambda$ is a sequence $\lambda = (\lambda_1 \geq \cdots \geq \lambda_n)$ with $\lambda_i - \lambda_j \in \ZZ$, and it is dominant if $\lambda_i \geq \lambda_{i+1}$.  We extend the definition of Macdonald polynomials to arbitrary signatures by setting 
\[
P_{(\lambda_1 + c, \ldots, \lambda_n + c)}(x; q^2, t^2) = (x_1 \cdots x_n)^c P_\lambda(x; q^2, t^2).
\]
We say that integral signatures $\mu = (\mu_1 \geq \cdots \geq \mu_{n-1})$ and $\lambda = (\lambda_1 \geq \cdots \geq \lambda_n)$ interlace if
\[
\lambda_1 \geq \mu_1 \geq \lambda_2 \geq \cdots \geq \mu_{n-1} \geq \lambda_n.
\]
Denote interlacing by $\mu \prec \lambda$ and write $|\lambda| = \sum_i \lambda_i$. A Gelfand-Tsetlin pattern subordinate to $\lambda$ is an interlacing sequence 
\[
\bbmu = \{\mu^l\}_{1 \leq l \leq n} = \{\mu^1 \prec \mu^2 \prec \cdots \prec \mu^{n-1} \prec \mu^n = \lambda\}
\]
ending in $\lambda$.  Define the $q$-Pochhammer symbol by 
\[
(u; q)_\infty = \prod_{n \geq 0} (1 - u q^n).
\]
In \cite{Mac}, Macdonald showed that $P_\lambda(x)$ satisfies the following branching rule, which yields an explicit summation expression for $P_\lambda(x)$ over Gelfand-Tsetlin patterns subordinate to $\lambda$. 

\begin{theorem}[{\cite[VI.7.13']{Mac}}] \label{thm:branch}
The Macdonald polynomials satisfy the branching rule
\[
P_\lambda(x_1, \ldots, x_n; q, t) = \sum_{\mu \prec \lambda} \psi_{\lambda / \mu}(q, t) P_\mu(x_1, \ldots, x_{n-1}; q, t) x_n^{|\lambda| - |\mu|}
\]
where the branching coefficient is
\[
\psi_{\lambda / \mu}(q, t) = \prod_{1 \leq i \leq j \leq \ell(\mu)} \frac{(q^{\mu_i - \mu_j} t^{j - i + 1}; q)_\infty(q^{\lambda_i - \lambda_{j+1}} t^{j - i + 1}; q)_\infty (q^{\lambda_i - \mu_j + 1}t^{j - i}; q)_\infty (q^{\mu_i - \lambda_{j+1} + 1} t^{j - i}; q)_\infty}{(q^{\mu_i - \mu_j + 1} t^{j - i}; q)_\infty (q^{\lambda_i - \lambda_{j + 1} + 1} t^{j - i}; q)_\infty (q^{\lambda_i - \mu_j} t^{j - i + 1}; q)_\infty (q^{\mu_i - \lambda_{j+1}} t^{j - i + 1}; q)_\infty}.
\]
\end{theorem}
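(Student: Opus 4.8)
The plan is to deduce the branching rule from the Etingof--Kirillov Jr. realization of $P_\lambda$ as a renormalized trace of a $U_q(\gl_n)$-intertwiner, expanding that trace in the Gelfand--Tsetlin basis and then reorganizing the resulting sum via a summation-by-parts procedure. First I would recall from \cite{EK} that, writing $t = q^k$, there is an intertwiner $\Phi_\lambda \colon V_\lambda \to V_\lambda \,\widehat\otimes\, N$ for a suitable auxiliary $U_q(\gl_n)$-module $N$ with one-dimensional zero weight space, such that $P_\lambda(x; q^2, t^2)$ equals $\Tr_{V_\lambda}(\Phi_\lambda\, x^h)$ up to an explicit normalizing factor, where $x^h$ acts on a weight vector by the corresponding monomial in $x_1, \ldots, x_n$. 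Expanding this trace in the Gelfand--Tsetlin basis $\{v_{\bbmu}\}$ of $V_\lambda$, indexed by patterns $\bbmu = \{\mu^1 \prec \cdots \prec \mu^{n} = \lambda\}$, whose weight has $i$-th coordinate $|\mu^i| - |\mu^{i-1}|$ (with $\mu^0 := 0$), produces a summation expression
\[
P_\lambda(x_1, \ldots, x_n; q, t) \;\propto\; \sum_{\bbmu} M_{\bbmu}(q,t)\; x_1^{|\mu^1|} x_2^{|\mu^2| - |\mu^1|} \cdots x_n^{|\lambda| - |\mu^{n-1}|},
\]
in which $M_{\bbmu}(q,t)$ is the component of $\Phi_\lambda v_{\bbmu}$ along $v_{\bbmu}$ tensored with the zero weight vector of $N$. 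The problem is then to identify $M_{\bbmu}$ and re-sum.

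The heart of the argument, which I would carry out next, is to show that each diagonal matrix element $M_{\bbmu}$ is obtained by applying a composition of Macdonald's difference operators $D^r_{n,x}(q^2,t^2)$ to a \emph{simple} kernel, namely a product of $q$-Pochhammer factors in the entries $\mu^l_i$. I would build this level by level: the transition from $\mu^{l-1}$ to $\mu^l$ inside $\Phi_\lambda$ is a rank-one intertwiner computation, and assembling the $n$ levels converts the matrix element into a difference-operator action on the kernel. This is where the map $\Res_l$ between the spherical parts of the double affine Hecke algebras of ranks $n$ and $l$ enters, together with the Dunkl--Kasatani conjecture of \cite{Dun, Kas} proven in \cite{Eno, ES}: they ensure that the rank-$n$ operator, restricted to the locus that governs the first $l$ variables, acts compatibly with the rank-$l$ Macdonald operator, which is exactly what keeps the kernel simple and makes the reorganization below go through. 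Matching the scalar prefactors against the coefficient $\psi_{\lambda/\mu}$ of Theorem~\ref{thm:branch} is the most computation-intensive part here, requiring care with the normalizations of the Gelfand--Tsetlin basis and of $D^r_{n,x}$.

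Finally I would separate out the variable $x_n$. Grouping the sum by the penultimate row $\mu := \mu^{n-1}$ rewrites the expression above as $\sum_{\mu \prec \lambda} x_n^{|\lambda| - |\mu|} S_\mu(x_1, \ldots, x_{n-1})$ with $S_\mu = \sum_{\bbmu' \colon \mu^{n-1} = \mu} M_{(\bbmu', \lambda)}\, x_1^{|\mu^1|} \cdots x_{n-1}^{|\mu| - |\mu^{n-2}|}$, and a summation by parts in $S_\mu$ — using the explicit Pochhammer form of the kernel, the compatibility supplied by $\Res_{n-1}$, and the eigenrelation $D^r_{n,x}(q^2,t^2)P_\lambda = e_r(q^{2\lambda}t^{2\rho})P_\lambda$ — collapses $S_\mu$ to $\psi_{\lambda/\mu}(q,t)$ times the rank-$(n-1)$ summation expression, which by induction on $n$ (the base case being immediate) is $\psi_{\lambda/\mu}(q,t)\,P_\mu(x_1, \ldots, x_{n-1}; q, t)$. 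The main obstacle, and the step I would attack first, is the transition at the top level: a priori $\Phi_\lambda$ does not restrict to a scalar multiple of $\Phi_\mu$ on the $U_q(\gl_{n-1})$-isotypic components of $V_\lambda$, and it is precisely the matrix-element formula — powered by the Dunkl--Kasatani input through $\Res_l$ — together with the summation by parts that shows these contributions reassemble into the branching coefficient $\psi_{\lambda/\mu}$.
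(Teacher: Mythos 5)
Your outline follows essentially the same route as the paper: the Etingof--Kirillov trace, expansion in the Gelfand--Tsetlin basis with a level-by-level factorization of the diagonal matrix elements, identification of each matrix element as Macdonald difference operators applied to an explicit Pochhammer-type kernel, and a summation by parts that reassembles the contributions into $\psi_{\lambda/\mu}$ times the rank-$(n-1)$ trace. Three points, however, need attention. First, the trace is taken over $L_{\lambda+(k-1)\rho}$, not over a module with highest weight $\lambda$, so the patterns in your sum are subordinate to $\lambda+(k-1)\rho$ and end in $\lambda+(k-1)\rho$ rather than in $\lambda$; the whole point of the adjoint/summation-by-parts step is to pass from that index set to $\{\mu\prec\lambda\}$, and one also needs the vanishing statement (Lemma \ref{lem:exclude}) that only suitably shifted patterns contribute. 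Second, your description of the DAHA map is not what the paper constructs: the map is $\bRes_l(q^2)\colon e\HH_{nl}(q^{-2l},q^2)e\to e\HH_n(q^{-2},q^{2l})e$, from rank $nl$ down to rank $n$, induced by the multiwheel specialization $X_i^a\mapsto q^{1-l+2a}X_i$ whose kernel is controlled by the Dunkl--Kasatani conjecture. It is a folding of $l$ geometrically spaced copies of each variable onto one variable, not a restriction of a rank-$n$ operator ``to the locus governing the first $l$ variables,'' and its role is to convert the Clebsch--Gordan sum of \cite{AS} (a sum over $\nu$ with $\mu_i-(k-1)\le\nu_i\le\mu_i$) into the product $\prod_{a=1}^{k-1}D_{n-1,q^{2\bmu}}(q^{2a};q^{-2},q^{2(k-1)})$ applied to the kernel. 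Third, the construction only makes sense at $t=q^k$ for positive integers $k$, so your argument as written proves the branching rule only at those specializations; to obtain the theorem for general $(q,t)$ you must add that $\psi_{\lambda/\mu}(q,t)$ is a rational function of $q$ and $t$ and is therefore determined by its values at $(q^2,q^{2k})$ for all positive integers $k$. None of these is a fatal obstruction, but all three must be supplied to close the argument.
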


\begin{corr} \label{corr:mac-sum}
The Macdonald polynomials admit the summation formula
\[
P_\lambda(x; q, t) = \sum_{\mu^1 \prec \cdots \prec \mu^{n-1} \prec \mu^n = \lambda} \prod_{i = 1}^n \psi_{\mu^i / \mu^{i-1}}(q, t) \prod_{i = 1}^n x_i^{|\mu^i| - |\mu^{i-1}|}.
\]
\end{corr}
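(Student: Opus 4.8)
The plan is to obtain Corollary~\ref{corr:mac-sum} by iterating the one-step branching rule of Theorem~\ref{thm:branch}, i.e.\ by a straightforward induction on the number of variables $n$. For the base case $n = 1$, the only Gelfand-Tsetlin pattern subordinate to $\lambda = (\lambda_1)$ is the trivial one $\mu^1 = \lambda$, and with the convention $\mu^0 = \emptyset$ the product defining $\psi_{\mu^1/\mu^0}(q,t)$ is empty, hence equal to $1$; since $P_{(\lambda_1)}(x_1; q, t) = x_1^{\lambda_1}$, the asserted identity reduces to $x_1^{\lambda_1} = x_1^{|\mu^1| - |\mu^0|}$, which holds. (This is also just the $n = 1$ instance of Theorem~\ref{thm:branch}.)

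For the inductive step, assume the formula for polynomials in $n-1$ variables. Applying Theorem~\ref{thm:branch} once gives
\[
P_\lambda(x_1, \ldots, x_n; q, t) = \sum_{\nu \prec \lambda} \psi_{\lambda/\nu}(q, t)\, P_\nu(x_1, \ldots, x_{n-1}; q, t)\, x_n^{|\lambda| - |\nu|},
\]
and expanding each $P_\nu(x_1, \ldots, x_{n-1}; q, t)$ via the inductive hypothesis as a sum over patterns $\mu^1 \prec \cdots \prec \mu^{n-1} = \nu$ turns the right-hand side into a double sum. Setting $\mu^n = \lambda$ and $\mu^{n-1} = \nu$, the two summations merge into a single sum over all patterns $\mu^1 \prec \cdots \prec \mu^{n-1} \prec \mu^n = \lambda$ subordinate to $\lambda$; on each such term the branching coefficients multiply as $\bigl(\prod_{i=1}^{n-1}\psi_{\mu^i/\mu^{i-1}}(q,t)\bigr)\psi_{\mu^n/\mu^{n-1}}(q,t) = \prod_{i=1}^{n}\psi_{\mu^i/\mu^{i-1}}(q,t)$ and the monomials telescope as $\bigl(\prod_{i=1}^{n-1} x_i^{|\mu^i|-|\mu^{i-1}|}\bigr) x_n^{|\mu^n|-|\mu^{n-1}|} = \prod_{i=1}^{n} x_i^{|\mu^i|-|\mu^{i-1}|}$, which is exactly the claimed formula.

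There is no genuine obstacle here; the argument is purely formal. The only points deserving a line of care are the bookkeeping of the telescoping exponents $|\mu^i| - |\mu^{i-1}|$ and the remark that applying Theorem~\ref{thm:branch} at each rank is consistent with the extension of $P_\lambda$ to arbitrary (not necessarily dominant nonnegative) signatures via $P_{(\lambda_1 + c, \ldots, \lambda_n + c)}(x) = (x_1 \cdots x_n)^c P_\lambda(x)$, since both sides of the branching rule transform compatibly under the simultaneous shift $\lambda \mapsto \lambda + (1,\ldots,1)$ and $\mu \mapsto \mu + (1,\ldots,1)$.
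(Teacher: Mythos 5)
Your proof is correct and is exactly the standard iteration of Theorem \ref{thm:branch} that the paper implicitly relies on (the paper states Corollary \ref{corr:mac-sum} without proof as an immediate consequence of the branching rule). The induction, the handling of the empty product $\psi_{\mu^1/\mu^0} = 1$, and the telescoping of the exponents are all as intended.
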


\subsection{The quantum group $U_q(\gl_n)$}

For a generic value of $q^{1/2}$, let $U_q(\gl_n)$ be the associative algebra with generators $e_i, f_i$ for $i = 1, \ldots, n-1$ and $q^{\pm \frac{h_i}{2}}$ for $i = 1, \ldots, n$ and relations
\begin{align*}
q^{\frac{h_i}{2}} e_i q^{-\frac{h_i}{2}} &= q^{\frac{1}{2}} e_i, \qquad q^{\frac{h_i}{2}} e_{i-1} q^{-\frac{h_i}{2}} = q^{-\frac{1}{2}} e_{i-1}, \qquad q^{\frac{h_i}{2}} f_{i} q^{-\frac{h_i}{2}} = q^{-\frac{1}{2}} f_i, \qquad q^{\frac{h_i}{2}} f_{i-1} q^{-\frac{h_i}{2}} = q^{\frac{1}{2}} f_{i-1}\\
[q^{\frac{h_i}{2}}, e_j] = [&q^{\frac{h_i}{2}}, f_j] = 0 \text{ for $j \neq i, i - 1$}, \qquad [e_i, f_j] = \delta_{ij} \frac{q^{h_i - h_{i+1}} - q^{h_{i+1} - h_i}}{q - q^{-1}}, \qquad [e_i, e_j] = [f_i, f_j] = 0 \text{ for $|i - j| > 1$}\\
q^{\frac{h_i}{2}} \cdot q^{-\frac{h_i}{2}} &= 1, \qquad e_i^2 e_j - (q + q^{-1}) e_i e_j e_i + e_j e_i^2 = 0, \qquad f_i^2 f_j - (q + q^{-1}) f_i f_j f_i + f_j f_i^2 = 0 \text{ for $|i - j| = 1$}.
\end{align*}
We take the coproduct on $U_q(\gl_n)$ defined by 
\begin{align*}
\Delta(e_i) &= e_i \otimes q^{\frac{h_{i+1} - h_i}{2}} + q^{\frac{h_i - h_{i+1}}{2}} \otimes e_i\\
\Delta(f_i) &= f_i \otimes q^{\frac{h_{i+1} - h_i}{2}} + q^{\frac{h_i - h_{i+1}}{2}} \otimes f_i \\
\Delta(q^{\frac{h_i}{2}}) &= q^{\frac{h_i}{2}} \otimes q^{\frac{h_i}{2}}.
\end{align*}
Denote the subalgebra generated by $f_i$ and $q^{\frac{h_i}{2}}$ by $U_q(\bb_-)$.  For each $r < n$, the subalgebra generated by $e_1, \ldots, e_{r-1}$, $f_1, \ldots, f_{r-1}$, and $q^{\frac{h_1}{2}}, \ldots, q^{\frac{h_r}{2}}$ forms a copy of $U_q(\gl_r)$ within $U_q(\gl_n)$. Finally, we denote the finite dimensional irreducible $U_q(\gl_n)$-representation corresponding to a dominant integral signature $\lambda$ by $L_\lambda$.

\subsection{Etingof-Kirillov~Jr. approach to Macdonald polynomials}

In \cite{EK}, Etingof and Kirillov Jr. gave an interpretation of Macdonald polynomials in terms of representation-valued traces of $U_q(\gl_n)$.  Let $W_{k-1}$ denote the $U_q(\gl_n)$-representation
\[
L_{((k - 1)(n - 1), - (k - 1), \ldots, -(k - 1))} = \Sym^{(k-1)n}(\CC^n) \otimes (\det)^{-(k - 1)},
\]
and choose an isomorphism $W_{k-1}[0] \simeq \CC \cdot w_{k-1}$ for some $w_{k-1} \in W_{k-1} [0]$ which spans the $1$-dimensional zero weight space $W_{k-1}[0]$.  Define the weight $\rho_n = \left(\frac{n-1}{2}, \ldots, \frac{1 - n}{2}\right)$. Writing $\rho$ for $\rho_n$, for a signature $\lambda$, there exists a unique intertwiner 
\[
\Phi_\lambda^n: L_{\lambda + (k - 1)\rho} \to L_{\lambda + (k - 1)\rho} \otimes W_{k-1}
\]
normalized to send the highest weight vector $v_{\lambda + (k-1)\rho}$ in $L_{\lambda + (k-1)\rho}$ to 
\[
v_{\lambda + (k - 1)\rho} \otimes w_{k-1} + (\text{lower order terms}),
\]
where $(\text{lower order terms})$ denotes terms of weight lower than $\lambda + (k - 1)\rho$ in the first tensor coordinate. Traces of these intertwiners lie in $W_{k-1}[0] = \CC \cdot w_{k-1}$ and yield Macdonald polynomials when interpreted as scalar functions via the identification $w_{k-1} \mapsto 1$.  Write $x^h$ for $x^h = x_1^{h_1} \cdots x_n^{h_n}$, where in any $U_q(\gl_n)$-representation we interpret $x_i^{h_i}$ as acting on the $\mu$ weight space by $x_i^{\mu_i}$.

\begin{theorem}[{\cite[Theorem 1]{EK}}] \label{thm:ek}
The Macdonald polynomial $P_\lambda(x; q^2, q^{2k})$ is given by 
\[
P_\lambda(x; q^2, q^{2k}) = \frac{\Tr(\Phi_\lambda^n x^h)}{\Tr(\Phi_0^n x^h)}.
\]
\end{theorem}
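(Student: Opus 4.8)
The plan is to identify the right-hand side of Theorem~\ref{thm:ek} through the characterization of the Macdonald polynomials recalled above: $P_\lambda(x; q^2, q^{2k})$ is the unique symmetric Laurent polynomial with leading term $x^\lambda$ that is a joint eigenfunction of all the operators $D_{n, x}^r(q^2, q^{2k})$, with eigenvalue $e_r(q^{2\lambda} t^{2\rho})$ for $t = q^k$. Write $\Psi_\lambda(x) := \Tr(\Phi_\lambda^n x^h)$, viewed as a scalar function via $w_{k-1} \mapsto 1$. First I would record that $\Psi_\lambda$ is a $W$-invariant Laurent series with leading term $x^{\lambda + (k-1)\rho}$: expanding $\Psi_\lambda(x) = \sum_\mu (\Phi_\lambda^n)_\mu\, x^\mu$ over the weights $\mu$ of $L_{\lambda + (k-1)\rho}$, where $(\Phi_\lambda^n)_\mu$ is the diagonal matrix element on the $\mu$-weight space, the normalization of $\Phi_\lambda^n$ forces $(\Phi_\lambda^n)_\mu = 1$ at $\mu = \lambda + (k-1)\rho$ with every other contributing $\mu$ strictly lower, while $W$-invariance follows from conjugating $\Phi_\lambda^n$ by Weyl group representatives together with one-dimensionality of $W_{k-1}[0]$. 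In particular $\Psi_0$ has leading term $x^{(k-1)\rho}$. That the quotient $\Psi_\lambda / \Psi_0$ is actually a Laurent polynomial rather than a mere series I would obtain either from the known explicit product formula for $\Psi_0$ (a $q$-deformation of the $(k-1)$st power of the Weyl denominator), or, more cheaply, a posteriori once the eigenfunction equation is in hand.

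The heart is the joint eigenfunction property, for which I would use the center $Z(U_q(\gl_n))$. For $1 \le r \le n$ let $C_r$ be the $r$-th quantum Casimir, built from the universal $R$-matrix and the ribbon element by taking a twisted trace over the auxiliary representation $\Lambda^r \CC^n$; it is central and acts on $L_\nu$ by an explicit scalar $\chi_r(\nu)$ depending only on the highest weight. Since $C_r$ is central, the intertwiner $\Phi_\lambda^n$ commutes with its action, giving $(C_r \otimes \id)\Phi_\lambda^n = \Phi_\lambda^n C_r$ and hence $\Tr(\Phi_\lambda^n C_r x^h) = \chi_r(\lambda + (k-1)\rho)\, \Psi_\lambda(x)$. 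The crucial and most laborious step is to evaluate the left-hand side as a $q$-difference operator applied to $\Psi_\lambda$: pushing the $\Lambda^r \CC^n$ factor of $C_r$ cyclically through the $L$-trace and past $\Phi_\lambda^n$ using the $R$-matrix, and then projecting the auxiliary factor onto its zero weight space, produces $\Tr(\Phi_\lambda^n C_r x^h) = (\mathcal{D}_r \Psi_\lambda)(x)$ for an explicit operator $\mathcal{D}_r$ whose coefficients are the rational functions $\prod_{i \in I, j \notin I} \frac{t^2 x_i - x_j}{x_i - x_j}$ appearing in $D_{n, x}^r$. Specializing to $\lambda = 0$ and conjugating then shows that $\Psi_0^{-1} \mathcal{D}_r \Psi_0$ coincides with $D_{n, x}^r(q^2, q^{2k})$ up to the additive and multiplicative constants fixed by $\chi_r$ and the $t$-power already built into our definition of $D_{n, x}^r$. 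I expect this $R$-matrix radial-part computation — simultaneously matching the difference-operator coefficients, the conjugating denominator $\Psi_0$, and the scalar $\chi_r$ — to be the main obstacle.

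To conclude, $\Psi_\lambda / \Psi_0$ is then a symmetric Laurent polynomial with leading term $x^\lambda$ and, by the previous step, a joint eigenfunction of all the $D_{n, x}^r(q^2, q^{2k})$; tracking the normalizing constants through $\chi_r(\lambda + (k-1)\rho)$ and $\chi_r((k-1)\rho)$ shows the eigenvalue of $D_{n, x}^r$ is exactly $e_r(q^{2\lambda} t^{2\rho})$, so the uniqueness of such an eigenfunction forces $\Psi_\lambda(x)/\Psi_0(x) = P_\lambda(x; q^2, q^{2k})$, which is Theorem~\ref{thm:ek}. One convergence remark is needed throughout: the traces are a priori series, convergent for $|q| < 1$ in a suitable region of $x$, but the argument (or the product formula for $\Psi_0$) shows the quotient extends to a genuine symmetric Laurent polynomial, so the resulting identity is one of polynomials.
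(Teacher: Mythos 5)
Theorem \ref{thm:ek} is not proved in this paper at all: it is quoted verbatim from \cite{EK} (their Theorem 1) and used as an input, so there is no internal proof to compare against. Your outline is, in substance, a faithful reconstruction of the original Etingof--Kirillov argument: symmetry and upper-triangularity of the trace, the action of central elements of $U_q(\gl_n)$ (the quantum Casimirs attached to the auxiliary representations $\Lambda^r \CC^n$) through the intertwiner, identification of their radial parts on such traces with Macdonald's operators conjugated by $\Tr(\Phi_0^n x^h)$, and the conclusion by uniqueness of the joint eigenfunction with leading term $x^\lambda$. One small correction: since $L_{\lambda+(k-1)\rho}$ is finite dimensional, $\Tr(\Phi_\lambda^n x^h)$ is a finite sum, hence already a Laurent polynomial, and the convergence caveat is unnecessary here (it becomes relevant only for the Verma-module variant mentioned in the introduction); the genuine issue is divisibility of $\Tr(\Phi_\lambda^n x^h)$ by $\Tr(\Phi_0^n x^h)$, which your appeal to the product formula of Proposition \ref{prop:ek-denom} handles.

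The substantive reservation is that essentially all of the content of the theorem sits in the step you yourself flag as the main obstacle: the $R$-matrix radial-part computation showing that the trace against the $r$-th Casimir equals an explicit $q$-difference operator applied to $\Tr(\Phi_\lambda^n x^h)$, whose conjugate by $\Tr(\Phi_0^n x^h)$ is $D_{n,x}^r(q^2,q^{2k})$ with eigenvalue matching $e_r(q^{2\lambda}t^{2\rho})$. As written this is a description of what must be verified rather than a verification; it requires the explicit fusion/$R$-matrix data on $L_{\lambda+(k-1)\rho}\otimes\Lambda^r\CC^n$ and the zero-weight projection in $W_{k-1}$, which is precisely the computation carried out in \cite{EK}. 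So your proposal is structurally correct and matches the cited source's strategy, but it should be regarded as an accurate outline of that proof rather than a self-contained one.
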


\begin{prop}[{\cite[Main Lemma]{EK}}] \label{prop:ek-denom}
On $L_{(k - 1)\rho}$, the trace may be expressed explicitly as
\begin{align*}
\Tr(\Phi_0^nx^h) &= (x_1 \cdots x_n)^{-\frac{(k-1)(n - 1)}{2}} \prod_{s = 1}^{k - 1} \prod_{i < j} (x_i - q^{2s} x_j).
\end{align*}
\end{prop}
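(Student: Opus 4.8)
The plan is to pin down $\psi(x):=\Tr(\Phi_0^n x^h)$ by its support, its zeroes, and a single leading coefficient. Every weight of $L_{(k-1)\rho}$ lies in the convex hull of the orbit $S_n\cdot(k-1)\rho$ and has weight-sum $|(k-1)\rho|=0$, so $\psi$ is a Laurent polynomial, homogeneous of degree $0$, whose monomials have each $x_i$-exponent in $\big[-\tfrac{(k-1)(n-1)}{2},\tfrac{(k-1)(n-1)}{2}\big]$. Hence $\Phi(x):=(x_1\cdots x_n)^{(k-1)(n-1)/2}\psi(x)$ is a genuine polynomial, homogeneous of degree $(k-1)\binom n2$ --- exactly the degree of $\prod_{s=1}^{k-1}\prod_{i<j}(x_i-q^{2s}x_j)$. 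Moreover the monomial $x_1^{(k-1)(n-1)}x_2^{(k-1)(n-2)}\cdots x_{n-1}^{\,k-1}x_n^{0}$ occurs in that product with coefficient $1$: the only way to get $x_n$-degree $0$ is to take the term $x_i$ from every factor, which forces this monomial and coefficient $1$. On the other side, after dividing by the twist this monomial is the highest weight $(k-1)\rho$ of $\psi$, whose coefficient is the coefficient of $v_{(k-1)\rho}\otimes w_{k-1}$ in $\Phi_0^n v_{(k-1)\rho}$, equal to $1$ by the normalization of $\Phi_0^n$. Therefore it suffices to show that $\psi$ vanishes identically on each hyperplane $x_i=q^{2s}x_j$ with $i<j$ and $1\le s\le k-1$: these give $(k-1)\binom n2$ pairwise coprime linear factors of $\Phi$, and since $\deg\Phi=(k-1)\binom n2$ the leading-coefficient computation then forces $\Phi=\prod_{s,i<j}(x_i-q^{2s}x_j)$.

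To prove the vanishing --- the crux --- I would pass to Verma modules. Let $\widehat{\Phi}_\nu\colon M_\nu\to M_\nu\otimes W_{k-1}$ be the intertwiner with leading term $v_\nu\otimes w_{k-1}$, unique up to scale because $\dim W_{k-1}[0]=1$. Expanding the diagonal matrix elements of $\widehat{\Phi}_\nu$ in a PBW basis of $M_\nu$ via the coproduct yields a closed form
\[
\Tr_{M_\nu}\!\big(\widehat{\Phi}_\nu\, x^h\big)=\frac{N_\nu(x)}{\prod_{i<j}(1-x_j/x_i)},
\]
with $N_\nu(x)$ an explicit finite sum (coefficients exponential in $\nu$) recording the action of the $f_i$ on $W_{k-1}$. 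Because $(k-1)\rho+\rho=k\rho$ is regular, $L_{(k-1)\rho}$ has a BGG resolution by Vermas $M_{w\cdot(k-1)\rho}$, $w\in S_n$; the resolution maps are $U_q(\gl_n)$-intertwiners, hence compatible (after matching normalizations) with the $\widehat{\Phi}$'s, so $\psi(x)=\sum_{w\in S_n}(-1)^{\ell(w)}\Tr_{M_{w\cdot(k-1)\rho}}(\widehat{\Phi}\,x^h)$ as rational functions. Antisymmetrizing over $S_n$ clears the denominator $\prod_{i<j}(1-x_j/x_i)$ and, after tracking the weights of $W_{k-1}$, produces precisely $(x_1\cdots x_n)^{-(k-1)(n-1)/2}\prod_{s,i<j}(x_i-q^{2s}x_j)$. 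This is the quantum deformation of the Weyl-character-formula computation that gives $\psi=(\text{Weyl denominator})^{k-1}$ at $q=1$.

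Two alternative routes to the vanishing. (a) Induction on $n$: restricting $\Phi_0^n$ to $U_q(\gl_{n-1})$ and noting that $W_{k-1}[0]$ is carried into the zero-weight space of $\Sym^{(k-1)(n-1)}(\CC^{n-1})\otimes(\det)^{-(k-1)}$, a Clebsch--Gordan analysis along $\gl_n\supset\gl_{n-1}$ gives $\psi^{(n)}(x)=\psi^{(n-1)}(x_1,\dots,x_{n-1})\cdot(x_1\cdots x_{n-1})^{-(k-1)/2}x_n^{-(k-1)(n-1)/2}\prod_{s=1}^{k-1}\prod_{i<n}(x_i-q^{2s}x_n)$ with base case $\psi^{(1)}\equiv1$, reducing everything to the rank-one ($U_q(\sl_2)$) case where the intertwiner's diagonal matrix elements are $q$-binomial coefficients. (b) A functional equation: $W_{k-1}$ is the Cartan component of $W_1^{\otimes(k-1)}$, and $\psi$ is multiplied under $x_i\leftrightarrow x_{i+1}$ by the scalar through which the normalized $R$-matrix $\check R_{W_{k-1}}(x_i/x_{i+1})$ acts on the line $W_{k-1}[0]$, whose zeroes --- read off from the $\check R$-eigenvalues on the summands of $W_1^{\otimes(k-1)}$ --- lie exactly at $x_i/x_{i+1}=q^{2s}$, $1\le s\le k-1$.

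I expect the vanishing statement to be the main obstacle: it is the only place requiring the internal structure of the intertwiner rather than weight bookkeeping, and each route demands care with normalizations --- the precise numerator $N_\nu(x)$ together with the compatibility of the Verma intertwiners under the BGG resolution maps; the exact Clebsch--Gordan coefficients of $\Phi_0^n$ along the Gelfand--Tsetlin tower; or the exact scalar by which $\check R_{W_{k-1}}$ acts on $W_{k-1}[0]$.
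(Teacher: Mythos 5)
The paper does not prove this proposition at all --- it is imported verbatim from the Main Lemma of \cite{EK} --- so the only meaningful comparison is with the argument there, and your outer skeleton is in fact the standard one: support and homogeneity bounds show that $(x_1\cdots x_n)^{(k-1)(n-1)/2}\Tr(\Phi_0^n x^h)$ is a homogeneous polynomial of degree exactly $(k-1)\binom n2$, the normalization of $\Phi_0^n$ on the one-dimensional highest weight space pins the coefficient of $x_1^{(k-1)(n-1)}\cdots x_n^0$ to $1$, and everything reduces to divisibility by the $(k-1)\binom n2$ pairwise coprime factors $x_i-q^{2s}x_j$. All of that part of your write-up is correct.

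The genuine gap is that the divisibility --- which you yourself flag as the crux --- is never actually established; you offer three sketches, and each one rests on an unverified claim that is essentially the whole difficulty. In route (i) the sentence ``antisymmetrizing over $S_n$ clears the denominator and produces precisely $\prod_{s,i<j}(x_i-q^{2s}x_j)$'' asserts the conclusion: you have not computed $N_\nu(x)$, and the compatibility of the Verma intertwiners $\widehat\Phi_{w\cdot(k-1)\rho}$ with the BGG differentials involves nontrivial normalization constants (the analogues of the $\chi_{k-1}$ factors appearing in this paper's discussion of Macdonald functions) whose cancellation is exactly what would produce the vanishing. Route (ii) posits the recursion $\psi^{(n)}=\psi^{(n-1)}\cdot(\cdots)\prod_{s,i<n}(x_i-q^{2s}x_n)$, but proving that recursion requires the diagonal Gelfand--Tsetlin matrix elements $c(\mu,0)$ of $\Phi_0^n$ and a resummation --- i.e.\ the content of Sections 4--6 of this paper specialized to $\lambda=0$ --- so it restates the problem rather than solving it. Route (iii) is the shakiest as written: already for $n=2$, $k=2$ the ratio $\psi(x_2,x_1)/\psi(x_1,x_2)=(x_2-q^2x_1)/(x_1-q^2x_2)$ is a nonconstant rational function of $x_1/x_2$, and the correct quasi-symmetry of trace functions is governed by the dynamical Weyl group / exchange operators, not by a naive eigenvalue of $\check R_{W_{k-1}}$ on the zero-weight line (note $W_{k-1}[0]\otimes W_{k-1}[0]$ is not one-dimensional inside $W_{k-1}^{\otimes 2}$); making this precise, and then locating the zeroes from it, is itself a substantive argument. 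So the proposal correctly frames the proof but leaves its only nontrivial step unproven.
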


\begin{remark}
Our notation for Macdonald polynomials is related to that of \cite{EK} via $P^{EK}_{\lambda}(x; q, t) = P_{\lambda}(x; q^2, t^2)$.
\end{remark}

\subsection{Gelfand-Tsetlin basis}

The representation $L_\lambda$ of $U_q(\gl_n)$ admits a basis $\{v_{\bbmu}\}$ indexed by Gelfand-Tsetlin patterns $\bbmu$ subordinate to $\lambda$. The weight of a basis vector $v_{\bbmu}$ is
\[
\wt(v_{\bbmu}) = \Big(|\mu^n| - |\mu^{n-1}|, \ldots, |\mu^2| - |\mu^1|, |\mu^1|\Big).
\]
It was shown in \cite{UTS} that these basis vectors may be expressed in terms of lowering operators $d_{r, i}$ in $U_q(\gl_r) \cap U_q(\bb_-) \subset U_q(\gl_n)$ applied to the highest weight vector $v_\lambda$.  More precisely, we have the following.
\begin{prop}[{\cite[Theorem 2.9]{UTS}}] \label{prop:gt-lower}
There exist lowering operators $d_{r, i} \in U_q(\gl_r) \cap U_q(\bb_-)$ so that the Gelfand-Tsetlin basis vectors are given by 
\[
v_{\bbmu} = d_1^{\mu^1} d_2^{\mu^2 - \mu^1} \cdots d_n^{\mu^n - \mu^{n-1}} v_\lambda,
\]
where $d_r^{\tau} = d_{r, 1}^{\tau_1} \cdots d_{r, r}^{\tau_r}$ for a partition $\tau$.
\end{prop}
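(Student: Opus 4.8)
\emph{Proof proposal.}
The plan is to induct on $n$, combining the quantum analogue of the classical Gelfand--Tsetlin branching rule with an explicit construction of the operators $d_{n, i}$ as $q$-deformations of Zhelobenko's lowering operators for the pair $(\gl_n, \gl_{n-1})$. The base case $n = 1$ is trivial: $L_\lambda$ is one-dimensional and $d_{1,1}$ acts as the identity. For the inductive step, restrict $L_\lambda$ to $U_q(\gl_{n-1})$. The $q$-branching rule gives a multiplicity-free decomposition $L_\lambda|_{U_q(\gl_{n-1})} \cong \bigoplus_{\nu \prec \lambda} L_\nu$, and since the $U_q(\gl_{n-1})$-weight of a $U_q(\gl_n)$-weight vector is the truncation of its weight, every vector of $U_q(\gl_n)$-weight $(\nu_1, \dots, \nu_{n-1}, |\lambda| - |\nu|)$ has $U_q(\gl_{n-1})$-weight $\nu$; hence the subspace of such vectors annihilated by $e_1, \dots, e_{n-2}$ is one-dimensional when $\nu \prec \lambda$ and zero otherwise. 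Write $w_\nu$ for its spanning vector, the highest weight vector of the $L_\nu$-isotypic component.

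\emph{Construction of the $d_{n,i}$ and the reduction.}
Define $d_{n, i} \in U_q(\gl_n) \cap U_q(\bb_-)$, for $i = 1, \dots, n-1$, by the $q$-commutator expression in $f_i, f_{i+1}, \dots, f_{n-1}$ and the Cartan generators $q^{h_j/2}$ whose classical limit is the Zhelobenko lowering operator, normalized so that $d_{n,i}$ decreases the $i$-th coordinate of the top row of the pattern, and set $d_{n,n} = \id$. The key claim is that whenever $\nu \prec \lambda$ the ordered product $d_n^{\lambda - \nu} v_\lambda = d_{n,1}^{\lambda_1 - \nu_1} \cdots d_{n,n-1}^{\lambda_{n-1} - \nu_{n-1}} v_\lambda$ is a nonzero vector of $U_q(\gl_n)$-weight $(\nu_1, \dots, \nu_{n-1}, |\lambda| - |\nu|)$ annihilated by $e_1, \dots, e_{n-2}$, so that by the previous paragraph it spans $\CC \cdot w_\nu$. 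Granting this, the operators $d_{r, i}$ with $r \le n-1$ are literally elements of $U_q(\gl_{n-1}) \subset U_q(\gl_n)$, hence act on the submodule $L_\nu \subset L_\lambda$ exactly as on an abstract copy of $L_\nu$; applying the inductive hypothesis inside $L_\nu = U_q(\gl_{n-1}) \cdot w_\nu$ with $\nu = \mu^{n-1}$ yields $v_{\bbmu} = d_1^{\mu^1} \cdots d_{n-1}^{\mu^{n-1} - \mu^{n-2}} w_{\mu^{n-1}} = d_1^{\mu^1} \cdots d_{n-1}^{\mu^{n-1} - \mu^{n-2}} d_n^{\mu^n - \mu^{n-1}} v_\lambda$, in the appropriate normalization of the basis, which is the asserted formula.

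\emph{Heart of the argument.}
Proving the key claim has two parts. First, that $d_n^{\lambda - \nu} v_\lambda$ is $U_q(\gl_{n-1})$-highest weight: here $d_{n, n-1} = f_{n-1}$ already commutes with $e_1, \dots, e_{n-2}$, so $d_{n,n-1}^{a_{n-1}} v_\lambda$ is killed by them, and one checks inductively, appending $d_{n, i}^{a_i}$ for decreasing $i$, that annihilation by $e_1, \dots, e_{i-1}$ is preserved, using the defining $q$-commutation relations of the quantum Mickelsson--Zhelobenko algebra $Z_q(\gl_n, \gl_{n-1})$ --- this is precisely what the correction terms in $d_{n,i}$ are engineered for, and it is the one genuinely technical input. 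Second, nonvanishing: this follows either by isolating the PBW-leading term $f_i f_{i+1} \cdots f_{n-1}$ in each $d_{n,i}$ and straightening against the known $U_q(\bb_-)$-module structure of $L_\lambda$, or, more cleanly, by specialization $q \to 1$ --- after normalization the operators and the vector $d_n^{\lambda - \nu} v_\lambda$ depend polynomially on $q^{1/2}$ with no pole at $1$, the classical statement is Zhelobenko's theorem, and a polynomial in $q^{1/2}$ nonzero at $q = 1$ cannot vanish identically. The main obstacle is thus the first part: transporting Zhelobenko's straightening relations to the quantum group, where the scalar coefficients become $q$-numbers and one must check that the Cartan-valued denominators appearing classically either telescope in the ordered product or act invertibly on each relevant weight space, so that the $d_{n,i}$ may be taken to be honest elements of $U_q(\bb_-)$ rather than of a localization.
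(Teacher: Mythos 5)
This proposition is not proved in the paper at all; it is imported verbatim from \cite{UTS} (Theorem 2.9 there), so there is no internal argument to compare against. Your outline — explicit $q$-deformed Zhelobenko lowering operators, induction on rank via the multiplicity-free restriction $L_\lambda|_{U_q(\gl_{n-1})} \cong \bigoplus_{\nu \prec \lambda} L_\nu$, and identification of $d_n^{\lambda-\nu}v_\lambda$ with the highest weight vector of the $L_\nu$-component — is exactly the strategy of that reference and of the classical Nagel--Moshinsky/Zhelobenko construction, so the architecture is right. Be aware, though, that as written your argument is a reduction rather than a proof: the explicit formulas for the $d_{n,i}$, the verification that they commute appropriately with $e_1,\dots,e_{i-1}$ modulo the left ideal generated by the raising operators, and the denominator-clearing needed to land in $U_q(\bb_-)$ rather than a localization of it are all deferred, and those computations constitute essentially the entire content of the cited theorem.
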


\subsection{Statement of the main results}

Computing the trace of $U_q(\gl_n)$-intertwiners in Theorem \ref{thm:ek} in the Gelfand-Tsetlin basis of $L_{\lambda + (k - 1)\rho}$ yields an expression for $P_\lambda(x; q^2, t^2)$ as a summation over Gelfand-Tsetlin patterns subordinate to $\lambda + (k - 1)\rho$.  Our main result shows that diagonal matrix elements of these intertwiners are given by application of Macdonald's operators to a simple kernel.

{\renewcommand{\thetheorem}{\ref{thm:mat-elt}}
\begin{theorem}
In the Gelfand-Tsetlin basis, the diagonal matrix element of $\Phi_\lambda^n$ on the basis vector corresponding to the Gelfand-Tsetlin pattern
\[
\{\sigma^1 \prec \cdots \prec \sigma^{n-1} \prec \lambda + (k - 1)\rho\}
\]
with $\sigma^l_i = \mu_i + (k - 1)\frac{n + 1 - 2i}{2}$ is given by
\[
c(\mu, \lambda) = \frac{\prod_{a = 1}^{k-1} D_{n-1, q^{2\bmu}}(q^{2a}; q^{-2}, q^{2(k-1)})\prod_{i \leq j} [\lambda_i - \mu_j + k (j - i)]_{k-1} \prod_{i < j}[\mu_i - \lambda_j + k (j - i) + k - 2]_{k-1}}{\prod_{i \leq j} [\mu_i - \mu_j + k(j - i) + k - 1]_{k-1} \prod_{i < j}[\lambda_i - \lambda_j + k (j - i) - 1]_{k - 1}},
\]
where $\bmu_i = \mu_i - k(i-1)$, $[m] = \frac{q^m - q^{-m}}{q - q^{-1}}$, and 
\[
D_{n-1, q^{2\bmu}}(u; q^2, t^2) = \sum_{r = 0}^{n - 1} (-1)^{n-1-r} u^{n-1-r} D_{n - 1, q^{2\bmu}}^r(q^2, t^2).
\]
\end{theorem}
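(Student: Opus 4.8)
The theorem asks us to show that the diagonal matrix element of $\Phi_\lambda^n$ at $\bbsigma$ --- that is, the coefficient of $v_{\bbsigma}\otimes w_{k-1}$ in $\Phi_\lambda^n v_{\bbsigma}$, using the identification $W_{k-1}[0]=\CC\cdot w_{k-1}$ --- equals $c(\mu,\lambda)$. The plan is to compute this coefficient directly as a matrix element. The first step is to use Proposition~\ref{prop:gt-lower} to write $v_{\bbsigma}=d\,v_{\lambda+(k-1)\rho}$ with $d=d_1^{\sigma^1}d_2^{\sigma^2-\sigma^1}\cdots d_n^{\sigma^n-\sigma^{n-1}}\in U_q(\bb_-)$, and then to move $\Phi_\lambda^n$ past $d$ via the intertwining property $\Phi_\lambda^n u=\Delta(u)\Phi_\lambda^n$, giving $\Phi_\lambda^n v_{\bbsigma}=\Delta(d)\bigl(v_{\lambda+(k-1)\rho}\otimes w_{k-1}+(\text{lower order terms})\bigr)$. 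Extracting the $v_{\bbsigma}\otimes w_{k-1}$-component then requires understanding the lower order terms of $\Phi_\lambda^n v_{\lambda+(k-1)\rho}$ together with the combinatorics of the coproducts $\Delta(d_{r,i})$ that appear when $\Delta(d)$ is expanded as a sum of operators on the two tensor factors.

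I would organize the computation recursively in $n$, paralleling the Etingof--Kirillov proof of Proposition~\ref{prop:ek-denom}. The structural input is that, under restriction to $U_q(\gl_{n-1})\subset U_q(\gl_n)$, the line $\CC\cdot w_{k-1}$ lies in the unique summand of $W_{k-1}$ isomorphic to the rank $n-1$ module $\Sym^{(k-1)(n-1)}(\CC^{n-1})\otimes(\det)^{-(k-1)}$, while $v_{\bbsigma}$ lies in the summand $L_{\sigma^{n-1}}$ of $L_{\lambda+(k-1)\rho}$. Tracking how $\Phi_\lambda^n$ intertwines these restricted decompositions expresses the rank $n$ matrix element in terms of rank $n-1$ data together with a ``radial part'' in the remaining directions; exactly as in the inductive proof of Proposition~\ref{prop:ek-denom}, this radial part is a difference operator in the $n-1$ variables $q^{2\bmu}$, and it is here that the Macdonald operators $D_{n-1,q^{2\bmu}}^r(q^{-2},q^{2(k-1)})$ enter. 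Unrolling the $k-1$ ``layers'' of $W_{k-1}=\Sym^{(k-1)n}(\CC^n)\otimes(\det)^{-(k-1)}$, equivalently using the embedding $W_{k-1}\subset W_1^{\otimes(k-1)}$, produces the product $\prod_{a=1}^{k-1}D_{n-1,q^{2\bmu}}(q^{2a};q^{-2},q^{2(k-1)})$ in the statement, while the remaining factors $[\,\cdot\,]_{k-1}$ appearing there accumulate from the corresponding specializations of the rank $n-1$ kernel (the analogue of $\Tr(\Phi_0^{n-1}x^h)$).

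The main obstacle --- and the real content of the theorem --- is the final identification: the representation-theoretic computation naturally outputs a sum over intermediate Gelfand--Tsetlin data (equivalently over weights of $W_{k-1}$), and one must recognize this sum as the displayed product of Macdonald difference operators applied to the explicit ratio of $q$-integer products. This is a $q$-hypergeometric identity of the same flavor as Macdonald's Pieri rules; I would prove it by checking that both sides satisfy the same system of first-order $q$-difference equations in $\mu$ and agree at a boundary value (for example $\mu$ a truncation of $\lambda$, or $\mu$ with all parts equal), or alternatively by induction on $k$ using the factorization $W_{k-1}\subset W_1^{\otimes(k-1)}$. Useful consistency checks are that for $k=1$ the module $W_0$ is trivial, $\Phi_\lambda^n=\id$, and both the matrix element and $c(\mu,\lambda)$ equal $1$, and that for $k=2$ the claimed formula should reduce to the explicit $R$-matrix expressions for matrix coefficients of intertwiners into $W_1=\Sym^n(\CC^n)\otimes(\det)^{-1}$.
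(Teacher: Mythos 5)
Your proposal correctly frames the problem (reduce to a single branching level via Proposition~\ref{prop:gt-lower} and the restriction $U_q(\gl_{n-1})\subset U_q(\gl_n)$; this matches the paper's Lemma~\ref{lem:exclude} and Proposition~\ref{prop:gt-factorization}) and your $k=1$ sanity check is sound, but the two steps that carry all the content of the theorem are asserted rather than proved. First, you never actually produce a closed formula for the one-level diagonal matrix element: ``move $\Phi_\lambda^n$ past $d$ and track the coproducts of the $d_{r,i}$'' requires explicit expressions for the lowering operators and their coproducts, which you do not supply and which do not lead anywhere tractable. The paper instead imports the Asherova--Smirnov formula for reduced Clebsch--Gordan coefficients of $L_{\tau'}\to L_\tau\otimes\Sym^p\CC^n$ (Theorem~\ref{thm:cg}) and spends all of Section~6 specializing and renormalizing it to obtain the explicit alternating sum of Proposition~\ref{prop:diag-coeff}. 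Without some such input, your computation has no starting point.

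Second, and more seriously, the identification of that sum with $\prod_{a=1}^{k-1}D_{n-1,q^{2\bmu}}(q^{2a};q^{-2},q^{2(k-1)})$ applied to the kernel $\Delta^{k-1}(\mu',\lambda)$ is exactly the hard part, and your proposed mechanisms do not reach it. The claim that ``unrolling the layers of $W_{k-1}\subset W_1^{\otimes(k-1)}$'' produces this specific product --- with the shifts $q^{2a}$, the inverted parameters $(q^{-2},q^{2(k-1)})$, and the operators acting on the \emph{index} variables $\bmu$ rather than on $x$ --- is an assertion of the conclusion, not an argument; and the fallback of ``both sides satisfy the same first-order $q$-difference equations in $\mu$'' is not established for either side (deriving such a recurrence for the matrix element would itself be most of the work, and the uniqueness step is not addressed). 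The paper's mechanism is entirely different and is the main innovation: the restriction map $\bRes_{k-1}(q^2)$ between spherical DAHAs of ranks $(n-1)(k-1)$ and $n-1$, whose existence rests on the Dunkl--Kasatani conjecture (Proposition~\ref{prop:dk}), together with Lemma~\ref{lem:res-diff}, which converts the product of $(n-1)$-variable operators into a single $(n-1)(k-1)$-variable operator $D_{(n-1)(k-1),X}(q^{k};q^{-2(k-1)},q^2)$ whose action on the lifted kernel can be computed term by term and matched against Proposition~\ref{prop:diag-coeff}. Your proof as written has no substitute for this ingredient.
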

\addtocounter{theorem}{-1}}

Using Theorem \ref{thm:mat-elt}, we give a new representation-theoretic proof of Macdonald's branching rule.

{\renewcommand{\thetheorem}{\ref{thm:branch-spec}}
\begin{theorem}
At $t = q^k$ for positive integer $k$, we have 
\[
P_\lambda(x_1, \ldots, x_n; q^2, q^{2k}) = \sum_{\mu \prec \lambda} x_n^{|\lambda| - |\mu|} P_\mu(x_1, \ldots, x_{n-1}; q^2, q^{2k}) \psi_{\lambda/\mu}(q^2, q^{2k})
\]
with 
\[
\psi_{\lambda/\mu}(q^2, q^{2k}) = \frac{\prod_{i \leq j}[\lambda_i - \mu_j + k(j - i) + k - 1]_{k-1} \prod_{i < j} [\mu_i - \lambda_j + k(j - i) - 1]_{k-1}}{\prod_{i \leq j} [\mu_i - \mu_j + k (j - i) + k - 1]_{k-1} \prod_{i < j} [\lambda_i - \lambda_j + k (j - i) - 1]_{k-1}}.
\]
\end{theorem}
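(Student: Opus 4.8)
\emph{Proof proposal.} The plan is to compute the Etingof--Kirillov trace $\Tr(\Phi_\lambda^n x^h)$ in the Gelfand--Tsetlin basis, insert the matrix-element formula of Theorem~\ref{thm:mat-elt}, and convert the resulting sum into the branching rule by an induction on $n$ together with a summation by parts. By Theorem~\ref{thm:ek} it suffices to prove
\[
\Tr(\Phi_\lambda^n x^h) = \Tr(\Phi_0^n x^h)\sum_{\mu\prec\lambda} x_n^{|\lambda|-|\mu|}\,P_\mu(x_1,\ldots,x_{n-1};q^2,q^{2k})\,\psi_{\lambda/\mu}(q^2,q^{2k}),
\]
with $\Tr(\Phi_0^n x^h)$ the explicit product of Proposition~\ref{prop:ek-denom}. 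Expanding the trace over the Gelfand--Tsetlin basis of $L_{\lambda+(k-1)\rho}$ and using that $x^h$ acts on $v_{\bbsigma}$ by the scalar $x^{\wt(v_{\bbsigma})}$ gives $\Tr(\Phi_\lambda^n x^h) = \sum_{\bbsigma}\langle v_{\bbsigma},\Phi_\lambda^n v_{\bbsigma}\rangle\,x^{\wt(v_{\bbsigma})}$, a sum over Gelfand--Tsetlin patterns $\bbsigma=\{\sigma^1\prec\cdots\prec\sigma^{n-1}\prec\lambda+(k-1)\rho\}$. I would then group these patterns by their penultimate layer $\sigma:=\sigma^{n-1}$: writing $v_{\bbsigma}=d_n^{\,\lambda+(k-1)\rho-\sigma}v^{(n-1)}_{\bbsigma'}$ by Proposition~\ref{prop:gt-lower} with $v^{(n-1)}_{\bbsigma'}$ the $U_q(\gl_{n-1})$-Gelfand--Tsetlin vector of highest weight $\sigma$, and commuting the lowering operators $d_n$ past $\Phi_\lambda^n$ using that $\Phi_\lambda^n$ is a $\gl_n$-intertwiner and that the only $U_q(\gl_{n-1})$-summand of $W_{k-1}$ carrying a nonzero zero-weight space is a copy of $W_{k-1}^{(n-1)}$ sitting in $\gl_1$-weight $0$, the sum over the lower layers $\bbsigma'$ reassembles into a trace of a rank-$(n-1)$ intertwiner:
\[
\Tr(\Phi_\lambda^n x^h)=\sum_{\sigma\prec\lambda+(k-1)\rho} A_{\lambda+(k-1)\rho/\sigma}(x)\;x_n^{|\lambda+(k-1)\rho|-|\sigma|}\;\Tr\!\big(\Phi^{n-1}_{\sigma-(k-1)\rho_{n-1}}(x_1,\ldots,x_{n-1})^h\big)
\]
for coefficients $A_{\lambda+(k-1)\rho/\sigma}(x)$ to be determined.

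To pin down $A_{\lambda+(k-1)\rho/\sigma}$ I would evaluate this identity on the ``flat'' Gelfand--Tsetlin pattern whose layers are the left-justified truncations of $\sigma$; Theorem~\ref{thm:mat-elt} computes the corresponding diagonal matrix element to be $c(\mu,\lambda)$ with $\sigma_i=\mu_i+(k-1)\frac{n+1-2i}{2}$, while the factorization above equates this matrix element with $A_{\lambda+(k-1)\rho/\sigma}$ times the rank-$(n-1)$ flat-pattern matrix element. Comparing ranks $n$ and $n-1$, the ratios of the bracket products $[\,\cdot\,]_{k-1}$ telescope, and the residual factor is exactly $\prod_{a=1}^{k-1}D_{n-1,q^{2\bmu}}(q^{2a};q^{-2},q^{2(k-1)})$, which is the promised ``Macdonald operators applied to a simple kernel'': it is the rank-$(n-1)$ Macdonald difference operators (at the shifted parameters) applied to the Etingof--Kirillov kernel of Proposition~\ref{prop:ek-denom} and then evaluated at $x_i=q^{2\bmu_i}$. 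Invoking Theorem~\ref{thm:ek} in rank $n-1$ as the induction hypothesis (the case $n=1$ being $\Tr(\Phi_\lambda^1 x^h)=x_1^{\lambda}$), we may replace $\Tr(\Phi^{n-1}_{\sigma-(k-1)\rho_{n-1}}(x_1,\ldots,x_{n-1})^h)$ by $P_{\sigma-(k-1)\rho_{n-1}}(x_1,\ldots,x_{n-1})\,\Tr(\Phi_0^{n-1}(x_1,\ldots,x_{n-1})^h)$, and since $\sigma-(k-1)\rho_{n-1}=\mu+\tfrac{k-1}{2}\one$ the Macdonald factor is $(x_1\cdots x_{n-1})^{(k-1)/2}P_\mu$. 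Now the sum over $\sigma$ is a sum over $\mu$ of terms built from Macdonald polynomials weighted by the above $D$-operator values, and a summation by parts moves the difference operators off the kernel onto the $\mu$-dependent weights (and $P_\mu$), at the cost of shifting the summation variable; after this the coefficient of each $P_\mu(x_1,\ldots,x_{n-1})$ is explicit, it vanishes unless $\mu$ interlaces $\lambda$ (the non-dominant $\mu$ that appear folding back under the straightening relations), and what survives is $\psi_{\lambda/\mu}$.

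Finally I would assemble the pieces: dividing by $\Tr(\Phi_0^n x^h)$, computing the ratio $\Tr(\Phi_0^{n-1}(x_1,\ldots,x_{n-1})^h)/\Tr(\Phi_0^n x^h)$ from Proposition~\ref{prop:ek-denom}, and combining it with $x_n^{|\lambda+(k-1)\rho|-|\sigma|}=x_n^{|\lambda|-|\mu|-(k-1)(n-1)/2}$ and the shift $(x_1\cdots x_{n-1})^{(k-1)/2}$, one checks that every spurious monomial in $x_1,\ldots,x_n$ and every linear factor $\prod_{s=1}^{k-1}\prod_{i<n}(x_i-q^{2s}x_n)$ cancels, leaving precisely $P_\lambda(x;q^2,q^{2k})=\sum_{\mu\prec\lambda}x_n^{|\lambda|-|\mu|}P_\mu(x_1,\ldots,x_{n-1};q^2,q^{2k})\psi_{\lambda/\mu}(q^2,q^{2k})$ with the stated branching coefficient. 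The step I expect to be the main obstacle is the summation by parts, together with the bookkeeping it requires: seeing the telescoping of the bracket products, the exact cancellation of the Etingof--Kirillov denominator, the correct alignment of the $\rho$-shifts and of the parameter substitution $(q,t)\mapsto(q^{-2},q^{2(k-1)})$ in the $D$-operator factor, and the verification that the degenerate terms (non-dominant $\mu$, or $\mu$ not interlacing $\lambda$) either vanish or fold back correctly.
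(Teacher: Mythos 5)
Your proposal follows essentially the same route as the paper's proof: expand the trace in the Gelfand--Tsetlin basis, use the level-by-level factorization of diagonal matrix elements (Lemma \ref{lem:exclude} and Proposition \ref{prop:gt-factorization}) to reassemble the lower layers into a rank-$(n-1)$ trace, insert Theorem \ref{thm:mat-elt}, and sum by parts (Proposition \ref{prop:mac-adj}, using that the kernel is $(\zeta,k-1)$-adapted) so that the Macdonald operators land on $P_{\mu'}$ and their eigenvalues cancel the Etingof--Kirillov denominator. The one ingredient you leave implicit is the Macdonald symmetry identity (Propositions \ref{prop:mac-sym} and \ref{prop:sym-diag}), which is what makes the difference operators acting in the index variable $q^{2\bmu'}$ diagonal on $P_{\mu'}(\ux/x_n)$ with eigenvalue $\prod_a\prod_i(x_i/x_n - q^{2a})$ and hence produces the cancellation of $\prod_s\prod_i(x_i - q^{2s}x_n)$.
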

}

\begin{remark}
This formulation is equivalent to that of Theorem \ref{thm:branch}.  To see this, note that for each $\lambda$ and $\mu$ the branching coefficients $\psi_{\lambda/\mu}(q, t)$ are rational functions in $q$ and $t$ and are therefore uniquely determined by their values at $(q^2, q^{2k})$ for all positive integers $k$.
\end{remark}

\begin{remark}
Theorem \ref{thm:ek} gives $P_\lambda(x; q^2, q^{2k})$ as a summation over Gelfand-Tsetlin patterns subordinate to $\lambda + (k - 1)\rho$ and Macdonald's branching rule gives it as a summation over Gelfand-Tsetlin patterns subordinate to $\lambda$.  Our result explains how these summations over different index sets are related.
\end{remark}

\subsection{Maps between spherical DAHA's of different rank}

Denote by $\HH_n(q, t)$ and $e\HH_n(q, t)e$ the double affine Hecke algebra of $GL_n$ and its spherical part (see Section 3 for precise definitions).  An essential ingredient in our proof is a map
\[
\bRes_l(q^2): e\HH_{nl}(q^{-2l}, q^2)e \to e \HH_n(q^{-2}, q^{2l})e
\]
between spherical DAHA's of different ranks which results from the Dunkl-Kasatani conjecture of \cite{Dun, Eno, ES, Kas}. We show in Theorem \ref{thm:daha-commute} and Corollary \ref{corr:commute} that $\bRes_l(q^2)$ commutes with Cherednik's $SL_2(\ZZ)$-action on DAHA and that it intertwines the map $\Res_l(q^2): \CC[(X_i^a)^{\pm 1}]^{S_{nl}} \to \CC[X_i^{\pm 1}]^{S_n}$ of spherical polynomial representations given by
\[
\Res_l(q^2): X_i^a \mapsto q^{2(1 - l + 2a)}X_i.
\]

\begin{remark}
Such maps were considered in the rational limit in \cite{CEE}, \cite[Theorem 7.11]{EGL}, and \cite{Sun}.
\end{remark}
 
\subsection{Degenerations of our results and connections to recent work}

Considering our results under the many degenerations of Macdonald polynomials to other special functions yields some connections to recent literature and some interpretations of independent interest.  In this section, we discuss the Heckman-Opdam, Jack, and Hall-Littlewood limits and a generalization to the Macdonald functions of \cite{ES2}.

\begin{itemize}
\item In the quasi-classical limit $q = e^\eps$, $t = q^k$, $\lambda = \lfloor \eps^{-1} \Lambda \rfloor$, $x = e^{\eps X}$, and $\eps \to 0$, the Macdonald polynomials become the Heckman-Opdam hypergeometric functions introduced in \cite{HO, He, Opd, Opd2}.  These functions were recently realized as integrals over Gelfand-Tsetlin polytopes in \cite{BG} by taking a scaling limit of Corollary \ref{corr:mac-sum}.  In \cite{Sun}, the expression of \cite{BG} was lifted to an integral over dressing orbits of a Poisson-Lie group by integration over the Liouville tori and an adjunction procedure involving Calogero-Sutherland Hamiltonians.  The techniques of this paper degenerate to the techniques used in \cite{Sun} under the degeneration from Macdonald-Ruijsenaars to Calogero-Sutherland integrable systems.

\item The Jack polynomials are a scaling limit of Macdonald polynomials under the specialization $t = q^k$ and the limit $q \to 1$ and have a similar branching rule.  They were given in \cite{Eti2} as traces of intertwiners of $U(\gl_n)$-modules using a degeneration of the Etingof-Kirillov~Jr. construction, under which our methods degenerate to a representation-theoretic proof of the Jack branching rule.

\item In the specialization $q = 0$, the Macdonald polynomials become the Hall-Littlewood polynomials. In \cite{Ven}, a summation expression was given for matrix elements of the $U_q(\gl_n)$-intertwiners $\Phi_\lambda^n$ in the Gelfand-Tsetlin basis; this expression factors and becomes particularly simple in the Hall-Littlewood limit.  In the notation of \cite{Ven}, $\Omega_{\beta/\mu}(q^2, q^{2k})$ can be non-zero only if $\mu_i \leq \beta_i \leq \mu_i + (k-1)$, meaning that the prelimit expression of \cite[Theorem 1.3]{Ven} is a sum over an index set similar to that which appears in Proposition \ref{prop:diag-coeff}.  It would be interesting to understand if the factorization which results from degenerating \cite[Theorem 1.3]{Ven} may be obtained by degenerating our Theorem \ref{thm:mat-elt}.

\item Replacing the finite dimensional module $L_{\lambda + (k - 1)\rho}$ by the Verma module $M_\lambda$ in the Etingof-Kirillov~Jr. construction yields the Macdonald functions of \cite{ES2}.  In particular, for a (possibly non-integral) $\lambda$, for the normalization factor
\[
\chi_{k-1}(\lambda) = \prod_{a = 1}^{k-1} \prod_{i < j} (1 - q^{-2(\lambda_i - \lambda_j + j - i) + 2a})
\]
and $\wPsi_\lambda: M_\lambda \to M_\lambda \otimes W_{k - 1}$ the unique intertwiner so that 
\[
\wPsi_\lambda(v_\lambda) = v_\lambda \otimes \chi_{k-1}(\lambda) w_{k-1} + (\text{lower order terms}),
\]
the Macdonald function is the joint eigenfunction of the $D_{n, e^x}^r(q^2, q^{2k})$ given by
\[
\psi(\lambda, x) = \frac{q^{2(k - 1)(\rho, \lambda -\rho)} \Tr(\wPsi_{\lambda - \rho} x^h)}{\prod_{i < j} \prod_{a = 1}^{k - 1} (q^a e^{(x_i - x_j)/2} - q^{-a} e^{(x_j - x_i)/2})}.
\]
Note that our notation is related to that of \cite{ES2} by the substitution $k \mapsto k + 1$. For a dominant integral signature $\lambda$ and $\tau$ a dominant weight in the root lattice, if $\lambda_i - \lambda_{i+1} \geq l$ for some $l \gg 0$, the quotient map $M_{\lambda} \to L_\lambda$ is an isomorphism in the $(\lambda - \tau)$-weight space.  The fact from \cite{ES2} that the branching coefficient for Macdonald functions is a rational function in $q^{\lambda_i}$ and $q^{\mu_i}$ therefore implies the branching rule
\[
\psi(\lambda, x) = \sum_{\mu_1 \in \lambda_1 - \ZZ_{\geq 0}} \cdots \sum_{\mu_{n-1} \in \lambda_{n-1} - \ZZ_{\geq 0}} \wpsi_{\lambda/\mu}^k(q^2) \psi(\mu, x_1, \ldots, x_{n-1}) e^{x_n (|\lambda| - |\mu|)}
\]
with branching coefficient given by
\begin{multline*}
\wpsi_{\lambda / \mu}^k(q^2) = q^{-(n-1)k(k-1)/2} q^{2(k - 1)((\rho, \lambda - \rho) - (\rho, \mu - \rho))} \frac{\chi_{k-1}(\lambda)}{\chi_{k-1}(\mu)} \psi_{\lambda - k \rho / \mu - k \rho}(q^2, q^{2k}) \\
=  (q - q^{-1})^{(k-1)(n - 1)} \frac{\prod_{i < j} [\lambda_i - \lambda_j + j - i - 1]_{k-1} \prod_{i \leq j}[\lambda_i - \mu_j + k - 1]_{k - 1} \prod_{i < j} [\mu_i - \lambda_j - 1]_{k - 1}}{\prod_{i < j} [\mu_i - \mu_j + j - i - 1]_{k-1}\prod_{i \leq j} [\mu_i - \mu_j + k - 1]_{k - 1} \prod_{i < j} [\lambda_i - \lambda_j - 1]_{k - 1}}.
\end{multline*}
Our techniques apply to this setting.  For any $M > 0$, there is some $l > 0$ so that if $|\tau| < M$ and $\lambda_i - \lambda_{i+1} \geq l$, the matrix elements of $\wPsi_{\lambda - \rho}$ on the Gelfand-Tsetlin basis elements of $M_{\lambda - \rho}$ of weight $\lambda - \rho - \tau$ coincide with those of $\chi_{k-1}(\lambda) \Phi_{\lambda - k\rho}^n$.  As shown in \cite{ES2}, the matrix elements are rational functions, hence coincide with the expression of Theorem \ref{thm:mat-elt} for all (possibly non-integral) $\lambda$.  Applying a adjunction argument similar to that of the polynomial case yields the branching rule for Macdonald functions.
\end{itemize}

\subsection{Outline of method and organization}

We briefly outline our method.  Our main technical result is Theorem \ref{thm:daha-commute}, which constructs and characterizes a map $\bRes_l(q^2)$ between spherical DAHA's of rank $nl$ and $n$.  We use Theorem \ref{thm:daha-commute} to relate Macdonald difference operators in $n$ variables at $t = q^l$ to Macdonald difference operators in $nl$ variables at $t = q^{1/l}$.  Combining this with an explicit summation expression for $U_q(\gl_n)$ matrix elements given in \cite{AS}, we obtain in Theorem \ref{thm:mat-elt} a new expression for diagonal $U_q(\gl_n)$ matrix elements as the application of Macdonald difference operators to an explicit kernel.

To obtain Macdonald's branching rule, we interpret the Etingof-Kirillov~Jr. expression for the Macdonald polynomial $P_\lambda(x; q^2, q^{2k})$ as a summation formula over Gelfand-Tsetlin patterns subordinate to $\lambda + (k - 1)\rho$.  Applying Theorem \ref{thm:mat-elt}, the symmetry identity, and summation by parts reduces this to the summation over Gelfand-Tsetlin patterns subordinate to $\lambda$ found in the branching rule. 

The remainder of this paper is organized as follows.  In Section 2, we give some necessary background on Macdonald polynomials and reformulate the results in a convenient form.  In Section 3, we define a map $\bRes_l(q^2)$ between spherical double affine Hecke algebras of different rank and prove the key Theorem \ref{thm:daha-commute} which allows us to compute the image of a certain Macdonald operator in Lemma \ref{lem:res-diff}.  In Section 4, we prove the main Theorem \ref{thm:mat-elt} on matrix elements of $U_q(\gl_n)$-intertwiners by applying the technique developed in Section 3 and a formula from \cite{AS}.  In Section 5, we put everything together to derive a new proof of Macdonald's branching rule.  Section 6 contains some technical manipulations of the result of \cite{AS} postponed from Section 4.

\subsection{Acknowledgments}

The author thanks P. Etingof for helpful discussions. Y.~S. was supported by a NSF graduate research fellowship (NSF Grant \#1122374).

\section{Quantum groups and Macdonald polynomials}

\subsection{Notations}

We will frequently need to consider expressions involving a signature and various shifts; we collect here the conventions we use to denote these.  Set $\rho_{n, i} = \frac{n + 1}{2} - i$ and $\one = (1, \ldots, 1)$.  For any set of indices $I$, let $\one_I$ denote the vector with $1$'s in those indices and $0$'s elsewhere.  Define $\wrho_n = \rho_n - \frac{n - 1}{2}\one$ so that $\wrho_{n, i} = -(i - 1)$ and $\wrho_{n - 1, i} = \wrho_{n, i}$.  For any signature $\lambda$, define the shifts $\wlambda = \lambda + (k - 1)\wrho$ and $\blambda = \lambda + k \wrho$ so that $\wlambda_i = \lambda_i - (k - 1)(i - 1)$ and $\blambda_i = \lambda_i - k (i - 1)$.  Finally, denote by $[a] = \frac{q^a - q^{-a}}{q - q^{-1}}$ the $q$-number, $[a]! = [a] \cdot [a - 1] \cdots [1]$ the $q$-factorial, and $[a]_m = [a]\cdot [a - 1] \cdots [a - m + 1]$ the falling $q$-factorial.

\subsection{Macdonald symmetry identity}

In this subsection, we state the Macdonald symmetry identity and use it to produce conjugates of the Macdonald difference operators acting diagonally the Macdonald polynomials via their index.

\begin{prop}[Macdonald symmetry identity] \label{prop:mac-sym}
We have 
\[
P_\lambda(q^{2\mu + 2k \rho}; q^2, q^{2k}) = \prod_{i < j} \frac{[\lambda_i - \lambda_j + k (j - i) + k-1]_k}{[\mu_i - \mu_j + k (j - i) + k - 1]_k} P_\mu(q^{2\lambda + 2k \rho}; q^2, q^{2k}).
\]
\end{prop}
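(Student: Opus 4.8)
The plan is to deduce the identity from two classical results on Macdonald polynomials. The first is Macdonald's duality (symmetry) identity
\[
\frac{P_\lambda(q^{2\mu + 2k\rho}; q^2, q^{2k})}{P_\lambda(q^{2k\rho}; q^2, q^{2k})} = \frac{P_\mu(q^{2\lambda + 2k\rho}; q^2, q^{2k})}{P_\mu(q^{2k\rho}; q^2, q^{2k})},
\]
valid for all dominant signatures $\lambda$ and $\mu$; see \cite[\S VI.6]{Mac}, the double affine Hecke algebra proof in \cite{Che2}, or the $U_q(\gl_n)$ trace-function proof in \cite{EK2}. The second is the principal specialization formula: for a Macdonald polynomial with parameters $Q$ and $T$, writing $T^\rho = (T^{\rho_1}, \ldots, T^{\rho_n})$ and using the centered normalization $\rho = \left(\frac{n-1}{2}, \ldots, \frac{1-n}{2}\right)$,
\[
P_\lambda(T^\rho; Q, T) = T^{-(\rho, \lambda)} \prod_{1 \leq i < j \leq n} \frac{(Q^{\lambda_i - \lambda_j} T^{j - i}; Q)_\infty\, (T^{j - i + 1}; Q)_\infty}{(Q^{\lambda_i - \lambda_j} T^{j - i + 1}; Q)_\infty\, (T^{j - i}; Q)_\infty},
\]
where $(\rho, \lambda) = \sum_i \rho_i \lambda_i$ (this is the $q$-analog of the Weyl dimension formula when $Q = T$). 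Setting $Q = q^2$ and $T = q^{2k}$ sends $T^\rho$ to $q^{2k\rho}$, so granting both results the proposition reduces to the purely algebraic statement
\[
\frac{P_\lambda(q^{2k\rho}; q^2, q^{2k})}{P_\mu(q^{2k\rho}; q^2, q^{2k})} = \prod_{i < j} \frac{[\lambda_i - \lambda_j + k(j - i) + k - 1]_k}{[\mu_i - \mu_j + k(j - i) + k - 1]_k}.
\]

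To check this last identity I would substitute $Q = q^2$, $T = q^{2k}$ into the specialization formula and form the ratio for $\lambda$ over $\mu$, in which the $\lambda$-independent factors $(q^{2k(j - i + 1)}; q^2)_\infty / (q^{2k(j - i)}; q^2)_\infty$ drop out. Because the deformation parameter is a power of $q^2$, the remaining infinite products telescope,
\[
\frac{(q^{2(\lambda_i - \lambda_j) + 2k(j - i)}; q^2)_\infty}{(q^{2(\lambda_i - \lambda_j) + 2k(j - i) + 2k}; q^2)_\infty} = (q^{2(\lambda_i - \lambda_j + k(j - i))}; q^2)_k,
\]
and I would convert each length-$k$ $q$-Pochhammer symbol to a falling $q$-factorial via $1 - q^{2a} = -q^{a}(q - q^{-1})[a]$, that is, $(q^{2m}; q^2)_k = (-1)^k (q - q^{-1})^k q^{km + k(k-1)/2}\,[m + k - 1]_k$. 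In the ratio the factors $(-1)^k(q - q^{-1})^k$ cancel, as do the powers $q^{k^2(j - i)}$ and $q^{k(k-1)/2}$; the leftover powers $q^{k(\lambda_i - \lambda_j) - k(\mu_i - \mu_j)}$ multiply over $i < j$ to $q^{2k(\rho, \lambda) - 2k(\rho, \mu)}$, using $\sum_{i < j}(\lambda_i - \lambda_j) = 2(\rho, \lambda)$, which is exactly cancelled by the prefactor $T^{-(\rho, \lambda) + (\rho, \mu)} = q^{-2k(\rho, \lambda) + 2k(\rho, \mu)}$. What survives is precisely the claimed product of falling $q$-factorials; in particular the shift $+\,(k - 1)$ inside each bracket is the residue of truncating an infinite product to length $k$.

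Two remaining points close the argument. The extension to arbitrary (not necessarily partition) dominant signatures reduces to the partition case: if $\lambda = \lambda^{+} + c\one$ and $\mu = \mu^{+} + d\one$ with $\lambda^{+}, \mu^{+}$ partitions, then $P_{\nu + c\one}(x) = (x_1 \cdots x_n)^c P_\nu(x)$ together with $|\rho| = 0$ shows that both sides of the proposition pick up the same $q$-monomial prefactor while the brackets, which only involve differences $\lambda_i - \lambda_j$ and $\mu_i - \mu_j$, are unchanged. The single step with genuine content is the duality identity itself, which I expect to be the main obstacle if one insists on a self-contained proof; within the framework of this paper it may be obtained either from the anti-involution of the double affine Hecke algebra exchanging the $X_i$ and $Y_i$ — symmetrizing over $S_n$ the resulting duality for nonsymmetric Macdonald polynomials yields the symmetric statement, cf.~\cite{Che2} — or from the symmetry of the $U_q(\gl_n)$ trace functions $\Tr(\Phi_\lambda^n x^h)$ under interchange of $\lambda$ with the spectral variable, as in \cite{EK2}, and I would cite one of these rather than reprove it.
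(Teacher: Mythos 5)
Your derivation is correct. Note that the paper offers no proof of this proposition at all: it is stated as a known reformulation of Macdonald's duality identity, so there is no "paper proof" to compare against, and what you have written is exactly the verification the paper leaves implicit. Your two ingredients (the duality identity $P_\lambda(q^{2\mu+2k\rho})/P_\lambda(q^{2k\rho}) = P_\mu(q^{2\lambda+2k\rho})/P_\mu(q^{2k\rho})$ and the principal specialization $P_\lambda(T^\rho;Q,T) = T^{-(\rho,\lambda)}\prod_{i<j}\frac{(Q^{\lambda_i-\lambda_j}T^{j-i};Q)_\infty(T^{j-i+1};Q)_\infty}{(Q^{\lambda_i-\lambda_j}T^{j-i+1};Q)_\infty(T^{j-i};Q)_\infty}$ in the centered normalization) are the standard route, and the bookkeeping checks out: the telescoping $(q^{2m};q^2)_\infty/(q^{2m+2k};q^2)_\infty = (q^{2m};q^2)_k$ with $m = \lambda_i-\lambda_j+k(j-i)$, the conversion $(q^{2m};q^2)_k = (-1)^k(q-q^{-1})^k q^{km+k(k-1)/2}[m+k-1]_k$, and the cancellation of the residual power $q^{2k(\rho,\lambda)-2k(\rho,\mu)}$ against the prefactor via $\sum_{i<j}(\lambda_i-\lambda_j) = 2(\rho,\lambda)$ are all correct, as is the reduction of general signatures to partitions using homogeneity and $|\rho|=0$. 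Citing \cite{Mac} or \cite{Che2} for the duality identity rather than reproving it is entirely consistent with the paper's own treatment.
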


We would like now to produce Macdonald operators acting on indices of Macdonald polynomials. For this, we abuse notation to write $D_{n - 1, q^{2\bmu}}^r$ for difference operators acting on additive indices $\bmu$ as well as multiplicative variables $q^{2\bmu}$.

\begin{prop} \label{prop:sym-diag}
The operator
\[
\wtilde{D}_{n - 1, q^{2\bmu}}^r(q^2, q^{2k}) = \prod_{i < j} [\bmu_i - \bmu_j + k - 1]_k \circ D_{n - 1, q^{2\bmu}}^r(q^2, q^{2k}) \circ \prod_{i < j} [\bmu_i - \bmu_j + k - 1]_k^{-1}
\]
satisfies
\[
\wtilde{D}_{n - 1, q^{2\bmu}}^r(q^2, q^{2k}) = \sum_{|I| = r} \prod_{i \in I, j \notin I, i > j} \frac{[\bmu_i - \bmu_j + k][\bmu_i - \bmu_j - k + 1]}{[\bmu_i - \bmu_j][\bmu_i - \bmu_j + 1]} T_{q^2, I}
\]
and
\[
\wtilde{D}_{n - 1, q^{2\bmu}}^r(q^2, q^{2k}) P_\mu(x; q^2, q^{2k}) = e_r(x) P_\mu(x; q^2, q^{2k}).
\]
\end{prop}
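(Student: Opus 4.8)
The plan is to establish the two displayed claims about $\wtilde{D}_{n-1,q^{2\bmu}}^r(q^2,q^{2k})$ in the following order: first, derive the explicit conjugated form of the operator, and second, use the Macdonald symmetry identity (Proposition \ref{prop:mac-sym}) together with the eigenvalue equation for $D^r_{n-1}$ to identify the eigenfunctions. For the first part, I would start from the definition of the Macdonald difference operator
\[
D_{n-1, q^{2\bmu}}^r(q^2, q^{2k}) = q^{2 k r(r - (n-1))} \sum_{|I| = r} \prod_{i \in I, j \notin I} \frac{q^{2k} q^{2\bmu_i} - q^{2\bmu_j}}{q^{2\bmu_i} - q^{2\bmu_j}} T_{q^2, I}
\]
acting in the variables $q^{2\bmu_i}$, and conjugate by the prefactor $\prod_{i<j}[\bmu_i - \bmu_j + k - 1]_k$. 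The conjugation multiplies each term $T_{q^2,I}$ in the sum by the ratio of the prefactor to its $T_{q^2,I}$-shifted version. Writing $[\bmu_i-\bmu_j+k-1]_k = \prod_{s=0}^{k-1}[\bmu_i-\bmu_j+s]$ and tracking how applying $T_{q^2,I}$ (which sends $\bmu_i \mapsto \bmu_i + 1$ for $i\in I$) telescopes these products, the ratio collapses: for a pair with $i\in I$, $j\notin I$ one picks up a factor depending on $\bmu_i - \bmu_j$, for $j\in I$, $i\notin I$ a factor depending on $\bmu_j-\bmu_i$, and pairs both in or both out of $I$ contribute nothing. Combining these with the rational coefficient $\prod_{i\in I, j\notin I}\frac{q^{2k+2\bmu_i}-q^{2\bmu_j}}{q^{2\bmu_i}-q^{2\bmu_j}}$, rewritten via $q^{2a}-q^{2b} = q^{a+b}(q^{a-b}-q^{-(a-b)}) = q^{a+b}(q-q^{-1})[a-b]$, and using the power-of-$q$ prefactor $q^{2kr(r-(n-1))}$ to absorb the leftover monomials, one should obtain exactly the stated symmetric-looking coefficient $\prod_{i\in I, j\notin I, i>j}\frac{[\bmu_i-\bmu_j+k][\bmu_i-\bmu_j-k+1]}{[\bmu_i-\bmu_j][\bmu_i-\bmu_j+1]}$; the restriction to $i>j$ arises because each unordered pair $\{i,j\}$ with exactly one element in $I$ contributes once, and the two orderings of the original coefficient ratio combine into the displayed product over $i>j$.

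For the second claim, I would invoke the known eigenvalue equation $D_{n-1,x}^r(q^2,q^{2k})P_\nu(x;q^2,q^{2k}) = e_r(q^{2\nu}q^{2k\rho_{n-1}})P_\nu(x;q^2,q^{2k})$ and the Macdonald symmetry identity of Proposition \ref{prop:mac-sym}, which makes $P_\lambda$ evaluated at the spectral point $q^{2\mu+2k\rho}$ proportional, with an explicit proportionality constant $\prod_{i<j}\frac{[\lambda_i-\lambda_j+k(j-i)+k-1]_k}{[\mu_i-\mu_j+k(j-i)+k-1]_k}$, to $P_\mu$ evaluated at $q^{2\lambda+2k\rho}$. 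The function $f_\lambda(\mu) := P_\mu(q^{2\lambda+2k\rho};q^2,q^{2k})$, viewed as a function of $\mu$, is (up to the $\mu$-dependent normalization from the symmetry identity) a multiple of $P_\lambda(q^{2\mu+2k\rho})$; applying $D_{n-1}^r$ in the index $\mu$ to $P_\lambda(q^{2\mu+2k\rho})$ and using the eigenvalue equation in the variable $q^{2\mu}$ (with $\rho$-shift absorbed) gives eigenvalue $e_r(q^{2\lambda}q^{2k\rho})$. Translating this through the symmetry identity — i.e. conjugating the index-side difference operator by precisely the normalization factor $\prod_{i<j}[\bmu_i-\bmu_j+k-1]_k$ that distinguishes $P_\lambda(q^{2\mu+2k\rho})$ from $f_\lambda(\mu)$ — yields that $\wtilde{D}_{n-1,q^{2\bmu}}^r$ acts on $P_\mu(x;q^2,q^{2k})$, now read as a polynomial in $x = q^{2\lambda+2k\rho}$, with eigenvalue $e_r(x)$ after the substitution. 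One must check that the $\rho$-shifts and the passage from $\bmu = \mu - k\wrho_{n-1}$ to $\mu$ align so that the eigenvalue is literally $e_r(x)$ rather than $e_r$ of a shifted argument; this is a bookkeeping check using $\bmu_i = \mu_i - k(i-1)$ and $\rho_{n-1,i} = \frac{n-i}{2} - \tfrac12$, so that $q^{2\bmu + 2k\rho_{n-1}}$ reduces to a uniform rescaling of $q^{2\mu}$ that does not affect elementary symmetric polynomials after the overall $(x_1\cdots x_{n-1})$-power normalization.

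The main obstacle I anticipate is the first step: carefully verifying that after conjugation the rational coefficients and the $q$-Pochhammer-type prefactors combine into the clean symmetric expression with the $i>j$ restriction, in particular that all spurious monomial factors $q^{(\cdots)}$ cancel against the normalization $q^{2kr(r-(n-1))}$ and against the $q^{a+b}$ terms produced when converting differences of powers of $q$ into $q$-numbers. This is a finite but fiddly computation; the cleanest route is probably to fix a subset $I$, split the index pairs into the four types ($i,j\in I$; $i,j\notin I$; $i\in I, j\notin I$; $j\in I, i\notin I$), compute the conjugation ratio and the coefficient contribution for each type separately, and then observe the telescoping. The second step, by contrast, is essentially a formal consequence of Proposition \ref{prop:mac-sym} and the standard eigenvalue property, requiring only the $\rho$-shift bookkeeping noted above.
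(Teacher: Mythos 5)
Your proposal is correct and follows the same route as the paper, whose (very terse) proof of this proposition consists of exactly the two steps you describe: a direct conjugation computation for the explicit form of $\wtilde{D}^r_{n-1,q^{2\bmu}}$, and the Macdonald symmetry identity (Proposition \ref{prop:mac-sym}) combined with the standard eigenvalue equation for the second claim. Two small slips to correct when writing it out: since $t=q^k$, the normalizing prefactor is $q^{kr(r-(n-1))}$ rather than $q^{2kr(r-(n-1))}$, and $\rho_{n-1,i}=\frac{n}{2}-i$, which is what makes $\mu_i+k\rho_{n-1,i}=\bmu_i+k\frac{n-2}{2}$ a uniform (hence harmless) rescaling.
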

\begin{proof}
The expression for $\wtilde{D}_{n - 1, q^{2\bmu}}^r(q^2, q^{2k})$ follows by direct computation, and the eigenvalue identity from the Macdonald symmetry identity. 
\end{proof}

\subsection{Adjoints of Macdonald difference operators}

We would like now to consider adjoints of Macdonald operators with respect to a Jackson-type inner product.  Fix lower and upper limits $\zeta = (\zeta^-, \zeta^+)$ with $\zeta^- = (\zeta_1^-, \ldots, \zeta_{n-1}^-)$, $\zeta^+ = (\zeta_1^+, \ldots, \zeta_{n-1}^+)$, and $\zeta_i^+ - \zeta_i^{-} \in \ZZ_{\geq 0}$.  Define the inner product 
\[
\langle f, g \rangle_{\zeta} := \sum_{\mu = \zeta^-}^{\zeta^+} f(q^{2\mu}) g(q^{2\mu}),
\]
where we define the iterated summation symbol by 
\begin{equation} \label{eq:summation}
\sum_{\mu = \zeta^-}^{\zeta^+} := \sum_{\mu_1 = \zeta^-_1}^{\zeta^+_1} \cdots \sum_{\mu_{n-1} = \zeta^-_{n-1}}^{\zeta^+_{n-1}}.
\end{equation}
We will consider situations where $g$ vanishes along a border of the region of summation.  In particular, we say that the function $g(q^{2\mu})$ is $(\zeta, l)$-adapted if $g(q^{2\mu}) = 0$ on the set 
\[
\{\mu \mid \text{$\zeta^+_i < \mu_i \leq \zeta^+_i + l$ or $\zeta^-_i - l \leq \mu_i < \zeta^-_i$ for any $i$}\}.
\]
We now characterize adjoints with respect to $\langle, \rangle_\zeta$ when applied to an $(\zeta, l)$-adapted function.

\begin{prop} \label{prop:mac-adj}
If $f(q^{2\mu})$ is $(\zeta, l)$-adapted, we have for any $g$ that 
\[
\left\langle \prod_{i = 1}^l \wD_{n - 1, q^{2\bmu}}^{r_{l + 1 - i}}(q^2, q^{2k})^\dagger f, g\right \rangle_{(\zeta^-, \zeta^+ + l\one)} = \left\langle f, \prod_{i = 1}^l \wD_{n - 1, q^{2\bmu}}^{r_i}(q^{2}, q^{2k}) g \right\rangle_\zeta,
\]
where 
\begin{align*}
\wtilde{D}_{n - 1, q^{2\bmu}}^r(q^2, q^{2k})^\dagger = \prod_{i < j} [\bmu_i - \bmu_j + k - 1]_{k-1}^{-1} \circ D_{n - 1, q^{2\bmu}}^r(q^{-2}, q^{2(k - 1)})  \circ \prod_{i < j} [\bmu_i - \bmu_j + k - 1]_{k-1}.
\end{align*}
\end{prop}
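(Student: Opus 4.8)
The plan is to prove the identity first for a single operator and then iterate it $l$ times, keeping careful track of the ``collar'' on which the function being transformed vanishes. The first step is to write both operators explicitly. By Proposition~\ref{prop:sym-diag} we have $\wD^r_{n-1,q^{2\bmu}}(q^2,q^{2k}) = \sum_{|I|=r} a_I(\bmu)\,T_{q^2,I}$ with $a_I(\bmu) = \prod_{i\in I,\,j\notin I,\,i>j}\frac{[\bmu_i-\bmu_j+k][\bmu_i-\bmu_j-k+1]}{[\bmu_i-\bmu_j][\bmu_i-\bmu_j+1]}$. On the other side, a short computation with $q$-numbers shows that $\frac{q^{2(k-1)}x_i-x_j}{x_i-x_j} = q^{k-1}\frac{[\bmu_i-\bmu_j+k-1]}{[\bmu_i-\bmu_j]}$ when $x_p = q^{2\bmu_p}$, and that the resulting power $q^{(k-1)r(n-1-r)}$ exactly cancels the prefactor $t^{r(r-(n-1))} = q^{(k-1)r(r-n+1)}$, so that $D^r_{n-1,q^{2\bmu}}(q^{-2},q^{2(k-1)}) = \sum_{|I|=r}\big(\prod_{i\in I,\,j\notin I}\frac{[\bmu_i-\bmu_j+k-1]}{[\bmu_i-\bmu_j]}\big)T_{q^{-2},I}$. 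Conjugating by $P(\bmu) := \prod_{i<j}[\bmu_i-\bmu_j+k-1]_{k-1}$ then gives $\wD^r_{n-1,q^{2\bmu}}(q^2,q^{2k})^\dagger = \sum_{|I|=r} c_I(\bmu)\,T_{q^{-2},I}$ with $c_I(\bmu) = \frac{P(\bmu-\one_I)}{P(\bmu)}\prod_{i\in I,\,j\notin I}\frac{[\bmu_i-\bmu_j+k-1]}{[\bmu_i-\bmu_j]}$, where $\one_I$ denotes the $0$--$1$ vector supported on $I$ (so $T_{q^{-2},I}$ evaluates a function of $\bmu$ at $\bmu-\one_I$).

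The algebraic heart of the argument is then the identity $c_I(\bmu+\one_I) = a_I(\bmu)$, which says precisely that $\wD^r{}^\dagger$ is the formal adjoint of $\wD^r$ for the unrestricted lattice sum $\sum_{\mu\in\ZZ^{n-1}}$. I would prove it by telescoping: the falling-factorial identities $\frac{[a+k-1]_{k-1}}{[a+k]_{k-1}} = \frac{[a+1]}{[a+k]}$ and $\frac{[a+k-1]_{k-1}}{[a+k-2]_{k-1}} = \frac{[a+k-1]}{[a]}$ collapse $P(\bmu)/P(\bmu+\one_I)$ into a product of single $q$-numbers indexed by the ordered pairs $(i\in I,\,j\notin I)$; the pairs with $i<j$ cancel against the corresponding factors of $\prod_{i\in I,\,j\notin I}\frac{[\bmu_i-\bmu_j+k]}{[\bmu_i-\bmu_j+1]}$ coming from the explicit form of $c_I(\bmu+\one_I)$, and the pairs with $i>j$, after applying $[-a] = -[a]$, reassemble into exactly the product defining $a_I(\bmu)$.

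With this in hand, the single-step adjunction reduces to a reindexing of the Jackson sum: for any $\kappa$ with $\kappa_i \ge \zeta_i^-$, any $((\zeta^-,\kappa),1)$-adapted $\phi$, and any $\psi$, I expand $\langle\wD^r{}^\dagger\phi,\psi\rangle_{(\zeta^-,\kappa+\one)}$ over $I$ and substitute $\nu = \mu-\one_I$ in each $I$-summand; by the previous paragraph the summand becomes $a_I(\bnu)\,\phi(q^{2\nu})\,\psi(q^{2(\nu+\one_I)})$, now summed over a box containing $[\zeta^-,\kappa]$ and differing from it only in lattice points $\nu$ having some coordinate $\nu_i = \zeta_i^- - 1$ (for $i\in I$) or $\nu_i = \kappa_i + 1$ (for $i\notin I$) --- points at which $\phi$ vanishes by $((\zeta^-,\kappa),1)$-adaptedness. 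Hence the sum collapses to the sum over $[\zeta^-,\kappa]$, which equals $\langle\phi,\wD^r\psi\rangle_{(\zeta^-,\kappa)}$. To iterate, I note that $(\wD^r{}^\dagger h)(q^{2\bmu})$ is a linear combination of the values $h(q^{2(\bmu-\one_I)})$, so if $h$ vanishes on the width-$(l-m)$ collar around $[\zeta^-,\zeta^++m\one]$ then $\wD^r{}^\dagger h$ vanishes on the width-$(l-m-1)$ collar around $[\zeta^-,\zeta^++(m+1)\one]$; consequently $\wD^{r_j}{}^\dagger\cdots\wD^{r_1}{}^\dagger f$ is $((\zeta^-,\zeta^++j\one),\,l-j)$-adapted, in particular $1$-adapted, for each $j\le l-1$. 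Peeling the operators off the left one at a time and applying the single-step identity with $\kappa = \zeta^++(l-1)\one,\ \zeta^++(l-2)\one,\ \ldots,\ \zeta^+$ in turn converts the left-hand side of the proposition into $\langle f,\wD^{r_1}\wD^{r_2}\cdots\wD^{r_l}g\rangle_\zeta$, which is the claim.

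I expect the telescoping identity $c_I(\bmu+\one_I) = a_I(\bmu)$ to be the main obstacle --- not because it is deep, but because the sign bookkeeping forced by $[-a] = -[a]$, together with the split of the product $\prod_{i\in I,\,j\notin I}$ into its $i<j$ and $i>j$ parts, must be carried out carefully. Everything downstream is lattice-point accounting: pinning down exactly which points lie outside each box after reindexing, and verifying that each application of $\wD^r{}^\dagger$ shrinks the vanishing collar by exactly one, so that the full width-$l$ collar of the $(\zeta,l)$-adapted function $f$ is precisely exhausted by the $l$ operators on the left. A minor point to handle in passing is that all the $q$-number denominators occurring in these operators are nonzero at the lattice points that actually appear, so that the operators and the reindexing are legitimate.
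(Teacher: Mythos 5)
Your proposal is correct and follows essentially the same route as the paper: compute the explicit coefficient form of $\wD^{r\dagger}_{n-1,q^{2\bmu}}$, observe that shifting its coefficient by $\one_I$ recovers the coefficient of $\wD^{r}_{n-1,q^{2\bmu}}$ from Proposition \ref{prop:sym-diag}, perform the single-step adjunction by reindexing the Jackson sum and using $(\zeta,1)$-adaptedness to kill the boundary terms, and then iterate by tracking how each application of the adjoint shrinks the vanishing collar by one. The only differences are presentational (you keep the conjugating factor $P(\bmu-\one_I)/P(\bmu)$ unexpanded until the telescoping step, and you make the collar bookkeeping in the induction slightly more explicit than the paper does).
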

\begin{proof}
First, we check by a direct computation that 
\[
\wtilde{D}_{n - 1 q^{2\bmu}}^r(q^2, q^{2k})^\dagger = \sum_{|I| = r} \prod_{i \in I, j \notin I, i > j} \frac{[\bmu_i - \bmu_j + k - 1][\bmu_i - \bmu_j - k]}{[\bmu_i - \bmu_j - 1][\bmu_i - \bmu_j]} T_{q^{-2}, I}.
\]
Now, for any subset of indices $I$, we have
\begin{equation} \label{eq:t-swap}
\langle f, T_{q^2, I} g \rangle_\zeta = \sum_{\mu = \zeta^-}^{\zeta^+} f(q^{2\mu}) g(q^{2(\mu + \one_I)}) = \sum_{\mu = \zeta^- + \one_I}^{\zeta^+ + \one_I} f(q^{2(\mu - \one_I)}) g(q^\mu) = \langle T_{q^{-2}, I} f, g \rangle_{(\zeta^- + \one_I, \zeta^+ + \one_I)}.
\end{equation}
Using this, we induct on $l$.  For $l = 1$, we have
\begin{align*}
\left\langle \wD_{n - 1, q^{2\bmu}}^{r}(q^{2}, q^{2k})^\dagger f, g \right\rangle_{(\zeta^-, \zeta^+ + \one)} &= \sum_{\mu = \zeta^-}^{\zeta^+ + \one} \sum_{|I| = r} \prod_{i \in I, j \notin I, i > j} \frac{[\bmu_i - \bmu_j + k - 1][\bmu_i - \bmu_j - k]}{[\bmu_i - \bmu_j - 1][\bmu_i - \bmu_j]}f(q^{2(\mu - \one_I)})  g(q^{2\mu})\\
&= \sum_{|I| = r} \sum_{\mu = \zeta^- - \one_I}^{\zeta^+} f(q^{2\mu}) T_{q^2, I}\left(\prod_{i \in I, j \notin I, i > j} \frac{[\bmu_i - \bmu_j + k - 1][\bmu_i - \bmu_j - k]}{[\bmu_i - \bmu_j - 1][\bmu_i - \bmu_j]}\right) g(q^{2(\mu + \one_I)})\\
&= \sum_{|I| = r} \sum_{\mu = \zeta^-}^{\zeta^+} f(q^{2\mu}) \prod_{i \in I, j \notin I, i > j} \frac{[\bmu_i - \bmu_j + k][\bmu_i - \bmu_j - k + 1]}{[\bmu_i - \bmu_j][\bmu_i - \bmu_j + 1]} g(q^{2(\mu + \one_I)})\\
&= \left\langle f, \wD_{n - 1, q^{2\bmu}}^{r}(q^{2}, q^{2k}) g \right\rangle_\zeta,
\end{align*}
where the second equality follows from (\ref{eq:t-swap}), the third follows because $f$ is $(\zeta, 1)$-adapted, and the last equality follows from Proposition \ref{prop:sym-diag}.  Now suppose the claim holds for $l - 1$.  If $f$ is $(\zeta, l)$-adapted, then $\wD_{n - 1, q^{2\bmu}}^{r_1}(q^{2}, q^{2k})^\dagger f$ is $(\zeta^-, \zeta^+ + \one, l - 1)$-adapted, so we have by applying the cases of $l - 1$ and then $1$ that
\begin{align*}
\left\langle \prod_{i = 1}^l \wD_{n - 1, q^{2\bmu}}^{r_{l + 1 - i}}(q^2, q^{2k})^\dagger f, g\right \rangle_{(\zeta^-, \zeta^+ + l\one)} &= \left\langle \wD_{n - 1, q^{2\bmu}}^{r_{1}}(q^2, q^{2k})^\dagger f, \prod_{i = 1}^{l - 1} \wD_{n - 1, q^{2\bmu}}^{r_{l + 1 - i}}(q^2, q^{2k}) g\right \rangle_{(\zeta^-, \zeta^+ + \one)}\\
&= \left\langle f, \prod_{i = 1}^l \wD_{n - 1, q^{2\bmu}}^{r_i}(q^{2}, q^{2k}) g \right\rangle_\zeta. \qedhere
\end{align*}
\end{proof}

\subsection{Reformulating the Etingof-Kirillov~Jr. construction}

In this subsection we shift the weights of the representations used in the Etingof-Kirillov~Jr. construction to make restriction from $U_q(\gl_n)$ to $U_q(\gl_{n-1})$ more notationally convenient. For a partition $\lambda$, define the intertwiner
\[
\wPhi_\lambda^n: L_{\lambda + (k - 1) \wrho} \to L_{\lambda + (k - 1) \wrho} \otimes W_{k-1}
\]
to be $\wPhi_\lambda^n = \Phi_{\lambda}^n \otimes \id_{(\det)^{-\frac{(k-1)(n-1)}{2}}}$.  We now rephrase Theorem \ref{thm:ek} in terms of the intertwiners $\wPhi_\lambda^n$.

\begin{corr} \label{corr:ek-shift}
The Macdonald polynomial $P_\lambda(x; q^2, q^{2k})$ is given by
\[
P_\lambda(x; q^2, q^{2k}) = \frac{\Tr(\wPhi_{\lambda}^n x^h)}{\Tr(\wPhi_0^n x^h)}.
\]
\end{corr}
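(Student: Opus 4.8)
The plan is to deduce the statement directly from Theorem \ref{thm:ek} by tracking the effect of the one-dimensional determinant twist. Recall that $\wPhi_\lambda^n = \Phi_\lambda^n \otimes \id_{(\det)^{-(k-1)(n-1)/2}}$, where the twisting module $(\det)^{-(k-1)(n-1)/2}$ is the one-dimensional $U_q(\gl_n)$-module of weight $-\tfrac{(k-1)(n-1)}{2}\one$; correspondingly, since $\lambda + (k-1)\wrho = \lambda + (k-1)\rho - \tfrac{(k-1)(n-1)}{2}\one$, we have the identification of $U_q(\gl_n)$-modules
\[
L_{\lambda+(k-1)\wrho} = L_{\lambda+(k-1)\rho} \otimes (\det)^{-(k-1)(n-1)/2},
\]
under which $\wPhi_\lambda^n$ is $\Phi_\lambda^n \otimes \id$ with the one-dimensional factor moved past $W_{k-1}$ tautologically, and its normalization matches that of $\Phi_\lambda^n$.

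First I would compute $\Tr(\wPhi_\lambda^n x^h)$. Since each $q^{h_i/2}$ is group-like for $\Delta$, the operator $x^h = x_1^{h_1}\cdots x_n^{h_n}$ acts on a pure-weight vector $v \otimes u$ of a tensor product as $(x^h v)\otimes(x^h u)$; on the one-dimensional factor $(\det)^{-(k-1)(n-1)/2}$ it acts by the scalar $(x_1\cdots x_n)^{-(k-1)(n-1)/2}$. Hence, writing the trace over $L_{\lambda+(k-1)\wrho} = L_{\lambda+(k-1)\rho}\otimes(\det)^{-(k-1)(n-1)/2}$ as a product of traces over the two tensor factors, we obtain
\[
\Tr(\wPhi_\lambda^n x^h) = (x_1\cdots x_n)^{-\frac{(k-1)(n-1)}{2}}\,\Tr(\Phi_\lambda^n x^h),
\]
and the identical computation with $\lambda = 0$ gives $\Tr(\wPhi_0^n x^h) = (x_1\cdots x_n)^{-(k-1)(n-1)/2}\,\Tr(\Phi_0^n x^h)$ (consistent with Proposition \ref{prop:ek-denom}).

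Finally I would divide: the common prefactor $(x_1\cdots x_n)^{-(k-1)(n-1)/2}$ cancels between numerator and denominator, so
\[
\frac{\Tr(\wPhi_\lambda^n x^h)}{\Tr(\wPhi_0^n x^h)} = \frac{\Tr(\Phi_\lambda^n x^h)}{\Tr(\Phi_0^n x^h)} = P_\lambda(x; q^2, q^{2k})
\]
by Theorem \ref{thm:ek}, which is the claim. There is no substantive obstacle here; the only point deserving a moment's care is the factorization of the trace of $\Phi_\lambda^n \otimes \id$ against $x^h$, which rests precisely on $x^h$ acting diagonally across the tensor product with the determinant twist, i.e. on the $q^{h_i/2}$ being group-like, together with the automatic agreement of the normalizations of $\Phi_\lambda^n$ and $\wPhi_\lambda^n$.
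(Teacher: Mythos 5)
Your proposal is correct and follows essentially the same route as the paper: the paper's proof simply cites Theorem \ref{thm:ek} together with the relation $\Tr(\wPhi_{\lambda}^n x^h) = \Tr(\Phi_\lambda^n x^h)\,(x_1 \cdots x_n)^{-\frac{(k-1)(n-1)}{2}}$, which is exactly the identity you derive from the group-like action of $x^h$ on the determinant twist. Your write-up just makes that one-line relation explicit.
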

\begin{proof}
This follows from Theorem \ref{thm:ek} and the relation
\[
\Tr(\wPhi_{\lambda}^n x^h) = \Tr(\Phi_\lambda^n x^h) (x_1 \cdots x_n)^{-\frac{(k-1)(n-1)}{2}}. \qedhere
\]
\end{proof}

\begin{corr} \label{corr:denom}
The denominator in Corollary \ref{corr:ek-shift} is given by 
\[
\Tr(\wPhi_0^n x^h) = (x_1 \cdots x_n)^{-(k-1)(n-1)} \prod_{s = 1}^{k-1} \prod_{i < j} (x_i - q^{2s} x_j).
\]
\end{corr}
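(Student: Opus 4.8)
The plan is to deduce this formula directly from Proposition \ref{prop:ek-denom} together with the weight twist relating $\wPhi_0^n$ to $\Phi_0^n$, so that essentially no new computation is needed. Recall that by definition $\wPhi_\lambda^n = \Phi_\lambda^n \otimes \id_{(\det)^{-(k-1)(n-1)/2}}$. Since $(\det)^{-(k-1)(n-1)/2}$ is a one-dimensional $U_q(\gl_n)$-representation supported in the single weight $-\tfrac{(k-1)(n-1)}{2}\one$, the operator $x^h$ acts on it by the scalar $(x_1 \cdots x_n)^{-(k-1)(n-1)/2}$. Consequently, on $L_{\lambda + (k-1)\wrho} = L_{\lambda + (k-1)\rho} \otimes (\det)^{-(k-1)(n-1)/2}$ we have $(x^h)$ acting as $(x^h)_{L_{\lambda + (k-1)\rho}} \otimes (x_1\cdots x_n)^{-(k-1)(n-1)/2}\,\id$, and composing with $\wPhi_\lambda^n$ and taking the trace over the tensor product (one factor being one-dimensional) gives
\[
\Tr(\wPhi_0^n x^h) = (x_1 \cdots x_n)^{-\frac{(k-1)(n-1)}{2}} \Tr(\Phi_0^n x^h),
\]
which is exactly the relation already recorded in the proof of Corollary \ref{corr:ek-shift}, specialized to $\lambda = 0$.

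It then remains only to substitute the explicit expression of Proposition \ref{prop:ek-denom},
\[
\Tr(\Phi_0^n x^h) = (x_1 \cdots x_n)^{-\frac{(k-1)(n-1)}{2}} \prod_{s=1}^{k-1} \prod_{i<j} (x_i - q^{2s} x_j),
\]
and to add the two powers of $x_1 \cdots x_n$, namely $-\tfrac{(k-1)(n-1)}{2} - \tfrac{(k-1)(n-1)}{2} = -(k-1)(n-1)$, which yields the claimed identity.

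There is no substantive obstacle here; this is why the statement is phrased as a corollary rather than a lemma. The only point that needs care is the bookkeeping of the determinant twist used to pass from the Etingof--Kirillov~Jr. normalization $\Phi_0^n$ to the shifted intertwiner $\wPhi_0^n$, and in particular checking that the exponent of $x_1 \cdots x_n$ doubles correctly; an alternative, more laborious route would be to rerun the Main Lemma computation of \cite{EK} directly for $\wPhi_0^n$, but invoking Proposition \ref{prop:ek-denom} and the already-established twist relation is cleaner.
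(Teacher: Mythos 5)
Your proposal is correct and follows exactly the paper's route: the paper's proof is the one-line observation that the claim follows from Proposition \ref{prop:ek-denom} and the definition of $\wPhi_0^n$, and your write-up simply makes explicit the determinant-twist bookkeeping (the doubling of the exponent of $x_1\cdots x_n$) that this entails.
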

\begin{proof}
This follows from Proposition \ref{prop:ek-denom} and the definition of $\wPhi_0^n$.
\end{proof}

\section{Spherical subalgebras of double affine Hecke algebras of different ranks}

\subsection{Double affine Hecke algebras}

Let $\HH_n(q, t)$ denote the double affine Hecke algebra (DAHA) of $GL_n$ defined by \cite{Che2}.  Following \cite{SV}, it is defined as the associative algebra generated by invertible elements $X_1^{\pm 1}, \ldots, X_n^{\pm 1}, Y_1^{\pm 1}, \ldots, Y_n^{\pm 1}$, and $T_1^{\pm 1}, \ldots, T_{n-1}^{\pm 1}$ subject to the relations
\begin{itemize}
\item $(T_i - t^{1/2})(T_i + t^{-1/2}) = 0$, $T_i T_{i+1} T_i = T_{i+1}T_i T_{i+1}$, $[T_i, T_j] = 0$ for $|i - j| \neq 1$;
\item $T_iX_iT_i = X_{i+1}$, $T_i^{-1}Y_i T_i^{-1} = Y_{i+1}$, and $[T_i, X_j] = [T_i, Y_j] = 0$ for $|i - j| > 1$;
\item $[X_i, X_j] = 0$, $[Y_i, Y_j] = 0$, $Y_1 X_1 \cdots X_n = qX_1 \cdots X_nY_1$, and $X_1^{-1} Y_2 = Y_2 X_1^{-1}T_1^{-2}$.
\end{itemize}
Note that $\{T_i\}$ generate a copy of the finite-type Hecke algebra, and $\{T_i, X_j\}$ and $\{T_i, Y_j\}$ generate copies of the affine Hecke algebra. For $\sigma = s_{i_1} \cdots s_{i_l}$ a reduced decomposition in $S_n$, let $T_\sigma = T_{i_1} \cdots T_{i_l}$.  Define the idempotent 
\[
e = \frac{(1 - t)^n}{(t; t)_n} \sum_{\sigma \in S_n} t^{\ell(\sigma)/2} T_\sigma.
\]
The spherical DAHA is defined to be the subalgebra $e\HH_n(q, t) e$.  From the results of \cite{AFS, SV, BS, SV2} surveyed in \cite{Neg} on maps between the Drinfeld double of the elliptic Hall algebra and the spherical DAHA, we may extract the following small set of generators.

\begin{lemma}[{\cite[Section 2.4]{Neg}}] \label{lem:daha-pgen}
The elements $ep_1(Y)e$, $ep_{-1}(Y)e$, $ep_1(X)e$, and $e p_{-1}(X)e$ generate $e\HH_{n}(q, t)e$.
\end{lemma}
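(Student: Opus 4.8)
The plan is to reduce the lemma to a generation statement for the elliptic Hall algebra $\mathcal{E}$, using the identification of the spherical DAHA with a quotient of $\mathcal{E}$ recorded in \cite{Neg}. First I would invoke the surjective algebra homomorphism $\Upsilon_n\colon \mathcal{E}\to e\HH_n(q,t)e$ furnished by the results of \cite{AFS, SV, BS, SV2} surveyed in \cite{Neg}: it intertwines the natural $SL_2(\ZZ)$-action on $\mathcal{E}$ with Cherednik's $SL_2(\ZZ)$-action and, in a suitable normalization of the standard generators $\{u_{\mathbf{x}}\}_{\mathbf{x}\in\ZZ^2\setminus 0}$ of $\mathcal{E}$, carries $u_{1,0},u_{-1,0},u_{0,1},u_{0,-1}$ to nonzero scalar multiples of $ep_1(X)e,\,ep_{-1}(X)e,\,ep_1(Y)e,\,ep_{-1}(Y)e$ respectively. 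Since $\Upsilon_n$ is surjective, it then suffices to show that the four elements $u_{\pm 1,0},u_{0,\pm 1}$ attached to the primitive vectors on the coordinate axes generate $\mathcal{E}$ as an associative algebra; write $\mathcal{A}\subseteq\mathcal{E}$ for the subalgebra they generate.

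Next I would produce every ``real'' generator $u_{\mathbf{x}}$ with $\mathbf{x}$ primitive. For primitive non-collinear $\mathbf{x},\mathbf{y}$ with $\det(\mathbf{x},\mathbf{y})=\pm 1$ the sum $\mathbf{x}+\mathbf{y}$ is again primitive, and for generic $q,t$ the defining relations of $\mathcal{E}$ give $[u_{\mathbf{x}},u_{\mathbf{y}}]=c_{\mathbf{x},\mathbf{y}}\,u_{\mathbf{x}+\mathbf{y}}$ with $c_{\mathbf{x},\mathbf{y}}\neq 0$. Starting from $(1,0)$ and $(0,1)$ and iterating the Farey-mediant moves $(\mathbf{x},\mathbf{y})\mapsto(\mathbf{x},\mathbf{x}+\mathbf{y})$ and $(\mathbf{x},\mathbf{y})\mapsto(\mathbf{x}+\mathbf{y},\mathbf{y})$ — each step a unimodular pair — one reaches every primitive vector with nonnegative coordinates; running the same process from the seeds $(-1,0),(0,-1)$, then $(1,0),(0,-1)$, then $(-1,0),(0,1)$ reaches the other three quadrants. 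Hence $u_{\mathbf{x}}\in\mathcal{A}$ for every primitive $\mathbf{x}$.

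It remains to obtain the imaginary elements $u_{0,\ell}$ and the higher real elements $u_{n\mathbf{x}}$. For $\ell\geq 2$ I would use that the slope-$\pm 1$ generators $u_{1,a}$ and $u_{-1,\ell-a}$ — which lie in $\mathcal{A}$ by the previous paragraph — pair under the bracket to the Cartan part: $[u_{1,a},u_{-1,\ell-a}]$ is a nonzero multiple of $\theta_{0,\ell}$, and $\theta_{0,\ell}$ equals a multiple of $u_{0,\ell}$ plus a universal polynomial in $u_{0,1},\dots,u_{0,\ell-1}$ via the exponential relation between the $\theta$'s and the $u$'s; inducting on $\ell$ (and symmetrically on $-\ell$) gives $u_{0,\ell}\in\mathcal{A}$ for all $\ell\in\ZZ$. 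Finally, since the $SL_2(\ZZ)$-action on $\mathcal{E}$ is by algebra automorphisms and each $\gamma\in SL_2(\ZZ)$ sends the four generators of $\mathcal{A}$ to primitive generators already shown to lie in $\mathcal{A}$ (the same holding for $\gamma^{-1}$), every $\gamma$ preserves $\mathcal{A}$; choosing $\gamma$ with $\gamma(0,1)=\mathbf{x}$ then yields $u_{n\mathbf{x}}=(\text{scalar})\,\gamma(u_{0,n})\in\mathcal{A}$ for all primitive $\mathbf{x}$ and all $n\geq 1$. Thus $\mathcal{A}$ contains a generating set of $\mathcal{E}$, so $\mathcal{A}=\mathcal{E}$, and applying $\Upsilon_n$ proves the lemma.

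I expect the main obstacle to be the bookkeeping in the last two steps: the elliptic Hall algebra relations pick up correction terms exactly when a bracket lands in a non-primitive direction, so the induction must be organized so that the productive brackets always target primitive vectors, while the brackets used to build the Cartan part are handled by subtracting off the lower-order terms already constructed. As an alternative that bypasses $\mathcal{E}$ entirely, one can work directly in $e\HH_n(q,t)e$: using the PBW decomposition $\HH_n(q,t)\cong\CC[X^{\pm1}]\otimes\mathcal{H}^{\mathrm{fin}}\otimes\CC[Y^{\pm1}]$ with $\mathcal{H}^{\mathrm{fin}}=\langle T_1,\dots,T_{n-1}\rangle$, the fact that $e$ absorbs the finite Hecke algebra ($eT_\sigma e=t^{\ell(\sigma)/2}e$), and the standard fact that $e\HH_n(q,t)e$ is then generated by $e\CC[X^{\pm1}]^{S_n}e$ and $e\CC[Y^{\pm1}]^{S_n}e$, it suffices to check that iterated commutators of $ep_{\pm1}(X)e$ with $ep_{\pm1}(Y)e$ produce all $ep_\ell(X)e$ and $ep_\ell(Y)e$; this is verified from the cross relations $Y_1X_1\cdots X_n=qX_1\cdots X_nY_1$ and $X_1^{-1}Y_2=Y_2X_1^{-1}T_1^{-2}$, and is in essence the computation carried out in the references cited above.
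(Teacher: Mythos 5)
The paper does not give its own proof of this lemma; it imports it from \cite[Section 2.4]{Neg} as a distillation of the elliptic Hall algebra/spherical DAHA results of \cite{AFS, SV, BS, SV2}, and your argument — surjecting the elliptic Hall algebra onto $e\HH_n(q,t)e$ and generating it from $u_{\pm 1,0}, u_{0,\pm 1}$ via Farey mediants, the $\theta$-exponential relation, and $SL_2(\ZZ)$-equivariance — is precisely a reconstruction of that cited route. So your proposal is correct (modulo the standard inputs you invoke from Burban--Schiffmann and Schiffmann--Vasserot, namely the nonvanishing of the structure constants for generic parameters and the surjectivity of the map to the spherical DAHA) and takes essentially the same approach the paper relies on.
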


\subsection{Polynomial representation of DAHA and Macdonald operators}

The double affine Hecke algebra admits a faithful polynomial representation $\rho$ on $\CC[X_1^{\pm 1}, \ldots, X_n^{\pm 1}]$ given by 
\begin{align*}
\rho(X_i) &= X_i\\
\rho(T_i) &= t^{1/2} s_i + \frac{t^{1/2} - t^{-1/2}}{X_i / X_{i+1} - 1} (s_i - 1)\\
\rho(Y_i) &= \rho(T_i) \cdots \rho(T_{n-1})s_{n - 1} \cdots s_1 T_{q, X_1} \rho(T_1^{-1}) \cdots \rho(T_{i - 1}^{-1}),
\end{align*}
where $s_i$ exchanges $X_i$ and $X_{i+1}$ and $T_{q, X_1}$ is the $q$-shift operator in $X_1$.  The action of elements of $e \HH_n(q, t) e$ on the symmetric part of the polynomial representation yields the Macdonald operators.

\begin{prop} \label{prop:che-mac}
When restricted to $\CC[X_1^{\pm 1}, \ldots, X_n^{\pm 1}]^{S_n}$, the action of $e \cdot e_r(Y_1, \ldots, Y_n) \cdot e$ is given by Macdonald's operator
\[
\rho(e \cdot e_r(Y_1, \ldots, Y_n) \cdot e) = \rho(e_r(Y_1, \ldots, Y_n)) = D_{n, X}^r(q, t).
\]
In particular, for any $n$-variable symmetric polynomial $f$, the operator 
\[
L_f = f(Y_1, \ldots, Y_n)
\]
is diagonalized on $P_\lambda(X; q, t)$ with eigenvalue $f(q^{\lambda}t^{\rho})$.
\end{prop}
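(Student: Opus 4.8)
The plan is to split the assertion into a purely formal reduction and one classical computation. First I would note that the subalgebra $\langle T_1, \ldots, T_{n-1}, Y_1^{\pm 1}, \ldots, Y_n^{\pm 1}\rangle \subseteq \HH_n(q,t)$ is a copy of the affine Hecke algebra of $GL_n$, whose center (by the Bernstein presentation) is the ring $\CC[Y_1^{\pm 1}, \ldots, Y_n^{\pm 1}]^{S_n}$ of symmetric Laurent polynomials in the $Y_i$. In particular $e_r(Y_1, \ldots, Y_n)$ commutes with every $T_i$, hence with every $T_\sigma$, hence with the idempotent $e$. Using $e^2 = e$ this gives $e \cdot e_r(Y) \cdot e = e_r(Y)\, e$, and moreover $e_r(Y)\, e = e \cdot e_r(Y) \cdot e \in e\HH_n(q,t)e$. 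On the other hand, $\rho(T_i)$ acts on $\CC[X_1^{\pm 1}, \ldots, X_n^{\pm 1}]^{S_n}$ simply as multiplication by $t^{1/2}$, since the divided-difference term $\frac{t^{1/2} - t^{-1/2}}{X_i/X_{i+1} - 1}(s_i - 1)$ annihilates symmetric functions; hence $\rho(T_\sigma)$ acts by $t^{\ell(\sigma)/2}$ and $\rho(e)$ acts by $\frac{(1-t)^n}{(t;t)_n}\sum_{\sigma \in S_n} t^{\ell(\sigma)} = 1$, i.e. $\rho(e)$ restricts to the identity on symmetric polynomials. Combining, on $\CC[X_1^{\pm 1}, \ldots, X_n^{\pm 1}]^{S_n}$ we get $\rho(e \cdot e_r(Y) \cdot e) = \rho(e_r(Y))\rho(e) = \rho(e_r(Y))$, which is the first asserted equality; and since $\rho(e_r(Y))f = \rho(e_r(Y)\,e)f \in \rho(e)\CC[X^{\pm 1}] = \CC[X^{\pm 1}]^{S_n}$ for symmetric $f$, the operator $\rho(e_r(Y))$ does preserve the symmetric part.

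Second, I would establish that $\rho(e_r(Y)) = D_{n,X}^r(q,t)$ on $\CC[X_1^{\pm 1}, \ldots, X_n^{\pm 1}]^{S_n}$. This is the one substantive step. Substituting the defining formula $\rho(Y_i) = \rho(T_i)\cdots\rho(T_{n-1})\, s_{n-1}\cdots s_1\, T_{q, X_1}\, \rho(T_1^{-1})\cdots\rho(T_{i-1}^{-1})$ into $e_r(Y_1, \ldots, Y_n)$, conjugating by $e$ (allowed by the first step) and commuting all the divided-difference operators $\rho(T_j^{\pm 1})$ and transpositions $s_j$ past the single $q$-shift in each $\rho(Y_i)$, one collects the coefficient of each subset shift $T_{q, I} = \prod_{i \in I} T_{q, X_i}$ with $|I| = r$. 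This is precisely Macdonald's and Cherednik's computation of the image of the $Y$-Hecke symmetrizer in the polynomial representation, and it yields $\rho(e_r(Y)) = t^{r(r-n)}\sum_{|I| = r}\prod_{i \in I,\, j \notin I}\frac{t X_i - X_j}{X_i - X_j}\, T_{q, I} = D_{n,X}^r(q,t)$. I would either carry out this standard manipulation directly or invoke it from \cite{Mac, Che2}; I expect this identification to be the main obstacle, everything else being formal.

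Finally, the ``in particular'' clause follows with no further analysis. The operators $D_{n,X}^r(q,t) = \rho(e_r(Y))$ for $r = 1, \ldots, n$ mutually commute because the $Y_i$ do, and by the defining property of the Macdonald polynomials recalled in the introduction (with $q^2 \mapsto q$, $t^2 \mapsto t$) the $P_\lambda(X; q, t)$ form a basis of $\CC[X_1^{\pm 1}, \ldots, X_n^{\pm 1}]^{S_n}$ consisting of their joint eigenfunctions, with $D_{n,X}^r(q,t) P_\lambda = e_r(q^\lambda t^\rho) P_\lambda$. Writing an arbitrary symmetric polynomial $f$ as $F(e_1, \ldots, e_n)$ for a polynomial $F$, we have $L_f = f(Y_1, \ldots, Y_n) = F(e_1(Y), \ldots, e_n(Y))$, which therefore acts on $\CC[X^{\pm 1}]^{S_n}$ as $F(D_{n,X}^1, \ldots, D_{n,X}^n)$ and hence diagonally on $P_\lambda$ with eigenvalue $F(e_1(q^\lambda t^\rho), \ldots, e_n(q^\lambda t^\rho)) = f(q^\lambda t^\rho)$.
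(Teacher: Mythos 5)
The paper offers no proof of this proposition at all: it is recalled as a standard fact of Cherednik theory (the references \cite{Che2, Mac} are doing the work), so there is no argument of the author's to compare yours against. Your write-up is a correct and appropriately organized justification. The formal reductions are right: $e_r(Y)$ lies in the Bernstein center of the affine Hecke subalgebra $\langle T_i, Y_j\rangle$, hence commutes with $e$, giving $e\,e_r(Y)\,e = e_r(Y)\,e$; and $\rho(e)$ restricts to the identity on $\CC[X^{\pm1}]^{S_n}$ because $\rho(T_\sigma)$ acts there by $t^{\ell(\sigma)/2}$ and $\sum_\sigma t^{\ell(\sigma)} = (t;t)_n/(1-t)^n$. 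You correctly identify the only substantive input as the classical computation that $\rho(e_r(Y))$ restricted to symmetric Laurent polynomials is the $r$-th Macdonald operator, and it is legitimate to import that from Macdonald--Cherednik exactly as the paper implicitly does; the ``in particular'' clause then follows as you say, since $f(Y) = F(e_1(Y),\ldots,e_n(Y))$ and the $P_\lambda$ (extended to all integral signatures) form a joint eigenbasis. One cosmetic caution: the paper only ever displays $D_{n,X}^r$ at arguments $(q^2,t^2)$ with prefactor $t^{r(r-n)}$, so at generic arguments $(q,t)$ the prefactor is $t^{r(r-n)/2}$, not $t^{r(r-n)}$ as written in your second paragraph; this is a bookkeeping artifact of the paper's convention and does not affect the argument.
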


\begin{prop} \label{prop:mac-negative}
When restricted to $\CC[X_1^{\pm 1}, \ldots, X_n^{\pm 1}]^{S_n}$, the action of $e \cdot p_1(Y^{-1}) \cdot e$ is given by 
\[
D_{n, X}^{n - 1}(q, t) \circ D_{n, X}^n(q, t)^{-1} = t^{-\frac{n - 1}{2}} \sum_{i = 1}^n \prod_{j \neq i} \frac{t x_j - x_i}{x_j - x_i} T_{q^{-1}, i}.
\]
\end{prop}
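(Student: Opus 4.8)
The plan is to reduce the statement to Proposition~\ref{prop:che-mac} by expressing $p_1(Y^{-1})$ through the elementary symmetric functions $e_{n-1}(Y)$ and $e_n(Y)$ of the $Y_i$, and then to compute the two resulting Macdonald operators and their composition explicitly.

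First I would note that inside the commutative subalgebra $\CC[Y_1^{\pm 1}, \ldots, Y_n^{\pm 1}] \subset \HH_n(q,t)$ (the $Y_i$ commute, are invertible, and $e_n(Y) = Y_1 \cdots Y_n$ is a unit) one has the Laurent-polynomial identity
\[
p_1(Y^{-1}) = Y_1^{-1} + \cdots + Y_n^{-1} = e_{n-1}(Y)\, e_n(Y)^{-1}.
\]
Since $e_{n-1}(Y)$, $e_n(Y)$, and $e_n(Y)^{-1}$ are symmetric in the $Y_i$, they are central in the affine Hecke subalgebra generated by the $T_i$ and the $Y_j^{\pm 1}$, and in particular commute with the idempotent $e$. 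Together with $e^2 = e$ and the invertibility of $e\, e_n(Y)\, e$ inside $e\HH_n(q,t)e$ (checked in the next step), this gives
\[
e\, p_1(Y^{-1})\, e = \bigl(e\, e_{n-1}(Y)\, e\bigr)\bigl(e\, e_n(Y)\, e\bigr)^{-1}
\]
on the spherical part. Applying Proposition~\ref{prop:che-mac} to each factor then yields, on $\CC[X_1^{\pm 1}, \ldots, X_n^{\pm 1}]^{S_n}$,
\[
\rho\bigl(e\, p_1(Y^{-1})\, e\bigr) = D_{n,X}^{n-1}(q,t) \circ D_{n,X}^{n}(q,t)^{-1},
\]
which is the first equality claimed.

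It then remains to identify this composition explicitly from the definition of $D_{n,X}^r(q,t)$. For $r = n$ the only index set is $I = \{1, \ldots, n\}$, the product over $i \in I,\, j \notin I$ is empty, and the prefactor is $t^{n(n-n)/2} = 1$, so $D_{n,X}^{n}(q,t) = T_{q,1} \cdots T_{q,n}$ is the constant-coefficient shift $f(x) \mapsto f(q x_1, \ldots, q x_n)$; in particular it is invertible with inverse $T_{q^{-1},1} \cdots T_{q^{-1},n}$. For $r = n-1$ the index sets are the complements $I = \{1, \ldots, n\} \setminus \{i\}$, so
\[
D_{n,X}^{n-1}(q,t) = t^{-\frac{n-1}{2}} \sum_{i=1}^n\ \prod_{j \neq i} \frac{t x_j - x_i}{x_j - x_i}\ \prod_{j \neq i} T_{q,j}.
\]
Composing the two and observing that applying $\prod_{j} T_{q^{-1},j}$ first and then $\prod_{j \neq i} T_{q,j}$ collapses to $T_{q^{-1},i}$ in the $i$-th term, I obtain
\[
D_{n,X}^{n-1}(q,t) \circ D_{n,X}^{n}(q,t)^{-1} = t^{-\frac{n-1}{2}} \sum_{i=1}^n \prod_{j \neq i} \frac{t x_j - x_i}{x_j - x_i}\, T_{q^{-1},i},
\]
as claimed.

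Most of this is routine substitution into the definition of the Macdonald operators; the only point that genuinely needs care is the factorization $e\, p_1(Y^{-1})\, e = (e\, e_{n-1}(Y)\, e)(e\, e_n(Y)\, e)^{-1}$, which requires moving the idempotents past the symmetric elements $e_{n-1}(Y)$ and $e_n(Y)^{\pm 1}$ and so rests on their centrality in the affine Hecke subalgebra. If one wishes to avoid invoking this, an alternative is to compare the two sides directly on the basis $\{P_\lambda\}$ of $\CC[X_1^{\pm 1}, \ldots, X_n^{\pm 1}]^{S_n}$, using Proposition~\ref{prop:che-mac} to read off the eigenvalues.
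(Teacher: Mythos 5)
Your proposal is correct and follows essentially the same route as the paper: the identity $p_1(Y^{-1}) = e_{n-1}(Y)e_n(Y)^{-1}$, the reduction to Proposition \ref{prop:che-mac} via the spherical idempotent, and the direct computation of $D_{n,X}^{n-1}(q,t)\circ D_{n,X}^n(q,t)^{-1}$. You merely spell out details the paper leaves implicit (centrality of symmetric Laurent polynomials in $Y$, hence commutation with $e$, and the collapse of the shifts to $T_{q^{-1},i}$), all of which are correct.
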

\begin{proof}
This follows from Proposition \ref{prop:che-mac} and the fact that $e p_1(Y^{-1}) e = e \cdot e_{n - 1}(Y) e_n(Y^{-1}) \cdot e$.
\end{proof}

\begin{remark}
By faithfulness, we will refer interchangeably to elements of the DAHA and spherical DAHA and their images under the polynomial representation in what follows.
\end{remark}

\subsection{$SL_2(\ZZ)$-action on DAHA}

Define the isomorphisms $\eps(q, t): \HH_n(q, t) \to \HH_n(q^{-1}, t^{-1})$ given by 
\[
\eps(q, t): X_i \mapsto Y_i, Y_i \mapsto X_i, T_i \mapsto T_i^{-1}, q \mapsto q^{-1}, t \mapsto t^{-1}
\]
and $\tau_+(q, t): \HH_n(q, t) \to \HH_n(q, t)$ given by
\[
\tau_+: X_i \mapsto X_i, T_i \mapsto T_i, Y_1 \cdots Y_r \mapsto  q^{-r/2} X_1 \cdots X_r Y_1 \cdots Y_r.
\]
Define also the composition $\tau_- = \eps \tau_+ \eps$.
\begin{prop}[{\cite[Section 3.2.2]{Che}}] \label{prop:sl2z}
The map
\[
\left(\begin{matrix} 1 & 0 \\ 1 & 1 \end{matrix}\right)\mapsto \tau_-, \qquad \left(\begin{matrix} 1 & 1 \\ 0 & 1 \end{matrix}\right) \mapsto \tau_+
\]
defines an action of $SL_2(\ZZ)$ on $\HH_n(q, t)$ which preserves $e \HH_n(q, t)e$. 
\end{prop}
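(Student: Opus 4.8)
The statement is due to Cherednik, so the plan is to recall the standard argument in a form adapted to the generators used here. The key reduction is to a presentation of $SL_2(\ZZ)$: writing $\sigma_1 = \tau_+$ and $\sigma_2 = \tau_-^{-1}$, the group $SL_2(\ZZ)$ is the quotient of the three-strand braid group $B_3 = \langle \sigma_1, \sigma_2 \mid \sigma_1\sigma_2\sigma_1 = \sigma_2\sigma_1\sigma_2\rangle$ by the (central) element $(\sigma_1\sigma_2\sigma_1)^4$, under the assignment $\sigma_1 \mapsto \left(\begin{smallmatrix}1&1\\0&1\end{smallmatrix}\right)$, $\sigma_2 \mapsto \left(\begin{smallmatrix}1&0\\-1&1\end{smallmatrix}\right)$; indeed $\sigma_1\sigma_2\sigma_1 \mapsto \left(\begin{smallmatrix}0&1\\-1&0\end{smallmatrix}\right)$, which has order $4$. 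So it suffices to check: (i) $\tau_+$ and $\tau_-$ are well-defined algebra automorphisms of $\HH_n(q,t)$; (ii) they satisfy $\tau_+\tau_-^{-1}\tau_+ = \tau_-^{-1}\tau_+\tau_-^{-1}$ and $(\tau_+\tau_-^{-1}\tau_+)^4 = \id$; and (iii) both automorphisms preserve $e\HH_n(q,t)e$.

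For (i), I would check that the assignment defining $\tau_+$ respects every defining relation of $\HH_n(q,t)$: the relations among the $T_i$ and the $X_j$ are preserved because $\tau_+$ fixes those generators, and the remaining relations $Y_1X_1\cdots X_n = qX_1\cdots X_nY_1$ and $X_1^{-1}Y_2 = Y_2X_1^{-1}T_1^{-2}$ follow from the prescribed action on the products $Y_1\cdots Y_r$ together with commutativity of the $X_i$; the analogous formula gives an explicit inverse, so $\tau_+$ is an automorphism. Since $\eps(q,t)\colon\HH_n(q,t)\to\HH_n(q^{-1},t^{-1})$ is an isomorphism with $\eps(q^{-1},t^{-1})\circ\eps(q,t) = \id$, the composite $\tau_- = \eps\tau_+\eps$ is an automorphism of $\HH_n(q,t)$.

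For (iii), note that $\tau_+$ fixes each $T_i$ by definition, and $\tau_-(T_i) = \eps(\tau_+(\eps(T_i))) = \eps(\tau_+(T_i^{-1})) = \eps(T_i^{-1}) = T_i$, so both automorphisms fix the finite Hecke subalgebra generated by the $T_i$; in particular they fix the idempotent $e$, which lies in that subalgebra. Hence $\tau_\pm(e\HH_n(q,t)e) = e\,\tau_\pm(\HH_n(q,t))\,e = e\HH_n(q,t)e$.

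The substantial part — and the main obstacle — is (ii). I would verify both relations on the generators $X_i, Y_i, T_i$. The delicate point is that the definition of $\tau_+$ only prescribes its action on the products $Y_1\cdots Y_r$, so computing the action of $\tau_+$ (and of $\tau_- = \eps\tau_+\eps$) on an individual $Y_i$ requires disentangling these products using $T_i^{-1}Y_iT_i^{-1} = Y_{i+1}$ and the commutation relations. Once this is done, one finds that $\sigma := \tau_+\tau_-^{-1}\tau_+$ is Cherednik's Fourier automorphism — the image of $\left(\begin{smallmatrix}0&1\\-1&0\end{smallmatrix}\right)$ — sending $T_i\mapsto T_i^{-1}$ and interchanging the $X$- and $Y$-lattices up to inversion; from this $\sigma^2$ is visibly the involution inverting both lattices, so $\sigma^4 = \id$, while the braid relation $\sigma = \tau_-^{-1}\tau_+\tau_-^{-1}$ falls out of the same generator-by-generator comparison. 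The full computation is the content of \cite[Section 3.2.2]{Che}, which I would invoke rather than reproduce.
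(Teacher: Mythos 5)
The paper offers no proof of this proposition: it is quoted verbatim from Cherednik [Che, Section 3.2.2], which is exactly where you send the reader for the substantive verification in your step (ii), so your route and the paper's are in the end the same. The surrounding scaffolding you supply — the reduction to the braid-group presentation of $SL_2(\ZZ)$ as $B_3$ modulo $(\sigma_1\sigma_2\sigma_1)^4$, the check that $\tau_+$ respects the defining relations, and the observation that $\tau_\pm$ fix the finite Hecke subalgebra and hence the idempotent $e$, so that $e\HH_n(q,t)e$ is preserved — is the standard argument and is sound. One internal inconsistency worth fixing: in (iii) you correctly compute $\tau_-(T_i)=\eps(\tau_+(T_i^{-1}))=T_i$, so every word in $\tau_\pm^{\pm 1}$, in particular $\sigma=\tau_+\tau_-^{-1}\tau_+$, fixes each $T_i$; your later description of $\sigma$ as ``sending $T_i\mapsto T_i^{-1}$'' conflates $\sigma$ with the parameter-inverting map $\eps$, which is an isomorphism $\HH_n(q,t)\to\HH_n(q^{-1},t^{-1})$ rather than an automorphism. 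The slip is harmless here because the verification of the relations is delegated to [Che] in any case, but the correct statement is that $\sigma$ fixes the $T_i$ and interchanges the $X$- and $Y$-lattices up to inversion.
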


The action of $\tau_+$ in the polynomial representation is realized via conjugation by the Gaussian
\[
\gamma_n(q) = q^{\sum_i \wx_i^2/2},
\]
where $q^{\wx_i} = X_i$.  Here, we view $\gamma_n(q)$ as an element in the completion of $\HH_n(q, t)$ by degree of $X$.

\begin{prop}[{\cite[Section 3.7]{Che}}] \label{prop:gauss-conj}
When evaluated in the polynomial representation, the action of $\tau_+$ on $\HH_n(q, t)$ is given by conjugation by $\gamma_n(q)$. That is, we have the equality
\[
\rho(\tau_+(f)) =\gamma_n(q) \rho(f) \gamma_n(q)^{-1}.
\]
\end{prop}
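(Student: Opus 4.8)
The plan is to exploit faithfulness of the polynomial representation $\rho$ to reduce the asserted identity $\rho(\tau_+(f)) = \gamma_n(q)\rho(f)\gamma_n(q)^{-1}$ to a check on algebra generators of $\HH_n(q,t)$. Both sides are algebra homomorphisms out of $\HH_n(q,t)$: the map $f\mapsto\rho(\tau_+(f))$ because $\tau_+$ is an algebra automorphism by Proposition~\ref{prop:sl2z} and $\rho$ is a representation, and the map $f\mapsto\gamma_n(q)\rho(f)\gamma_n(q)^{-1}$ because conjugation by the invertible element $\gamma_n(q)$ of the completion of $\HH_n(q,t)$ by degree of $X$ is an inner automorphism of that completion, restricting to an operation on $\rho(\HH_n(q,t))$. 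Hence the two agree on all of $\HH_n(q,t)$ once they agree on generators; and since $Y_{i+1}=T_i^{-1}Y_iT_i^{-1}$, it suffices to treat $X_1^{\pm 1},\dots,X_n^{\pm 1}$, $T_1,\dots,T_{n-1}$, and $Y_1^{\pm 1}$.

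On $X_i$ and $T_i$ the identity is immediate. One has $\tau_+(X_i)=X_i$ and $\tau_+(T_i)=T_i$ by definition, while in the polynomial representation $\gamma_n(q)$ is multiplication by the $S_n$-invariant Gaussian $q^{\sum_i\wx_i^2/2}$ (with $q^{\wx_i}=X_i$); such a multiplication operator commutes with multiplication by every Laurent polynomial in the $X_j$ and, being symmetric, with every coordinate permutation $s_j$. It therefore commutes with $\rho(X_i)=X_i$ and with each Demazure--Lusztig operator $\rho(T_i)=t^{1/2}s_i+\frac{t^{1/2}-t^{-1/2}}{X_i/X_{i+1}-1}(s_i-1)$, so both sides agree on these generators.

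The remaining, and only substantive, case is $Y_1$, where the claim unwinds to $\gamma_n(q)\rho(Y_1)\gamma_n(q)^{-1}=\rho(\tau_+(Y_1))=q^{-1/2}X_1\rho(Y_1)$. Here I would plug in the explicit formula $\rho(Y_1)=\rho(T_1)\cdots\rho(T_{n-1})\cdot(s_{n-1}\cdots s_1)\cdot T_{q,X_1}$ from the polynomial representation. Since $\gamma_n(q)$ commutes with the $\rho(T_j)$ and the $s_j$, conjugation reaches only the $q$-shift factor, and a one-line computation using $q^{\wx_1}=X_1$ gives $\gamma_n(q)\,T_{q,X_1}\,\gamma_n(q)^{-1}=q^{-1/2}X_1^{-1}T_{q,X_1}$ (the precise sign depending on the normalization of $T_{q,X_1}$ and of $\gamma_n(q)$); moving the resulting monomial leftward through $s_{n-1}\cdots s_1$ replaces $X_1^{-1}$ by $X_n^{-1}$, giving $\gamma_n(q)\rho(Y_1)\gamma_n(q)^{-1}=q^{-1/2}\rho(T_1)\cdots\rho(T_{n-1})\,X_n^{-1}\,(s_{n-1}\cdots s_1)\,T_{q,X_1}$. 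It then remains to carry the monomial factor back out to the far left through the product of Demazure--Lusztig operators, using the affine Hecke relations $T_jX_jT_j=X_{j+1}$, and to verify that what emerges is exactly $q^{-1/2}X_1\rho(Y_1)$; equivalently, one checks directly that conjugation by $\gamma_n(q)$ obeys the defining rule $\tau_+(Y_1\cdots Y_r)=q^{-r/2}X_1\cdots X_rY_1\cdots Y_r$, and it may be cleaner to run this with the whole product $Y_1\cdots Y_r$ than with $Y_1$ alone. The main obstacle I anticipate is precisely this last bookkeeping: tracking the powers of $q$ and the interchange of $X_1$ with $X_n$ as the shift operator is conjugated past the finite Hecke part, together with making the notion of conjugation by the Gaussian $\gamma_n(q)$ precise enough --- working in the completion of $\HH_n(q,t)$ by $X$-degree, equivalently on $q^{\sum_i\wx_i^2/2}\cdot\CC[X_1^{\pm 1},\dots,X_n^{\pm 1}]$ --- that it genuinely defines an algebra automorphism.
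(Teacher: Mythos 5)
The paper offers no proof of this proposition: it is quoted verbatim from \cite[Section 3.7]{Che} and is invoked later only through the observation that $\bRes_l$ carries the rank-$nl$ Gaussian to a scalar multiple of the rank-$n$ one. Your plan --- reduce to the generators $X_i$, $T_i$, $Y_1$ using that both $f\mapsto\rho(\tau_+(f))$ and $f\mapsto\gamma_n(q)\rho(f)\gamma_n(q)^{-1}$ are algebra homomorphisms and $\rho$ is faithful, note that the Gaussian is a symmetric multiplication operator and hence commutes with $\rho(X_i)$ and with each Demazure--Lusztig operator, and then conjugate the $q$-shift factor in $\rho(Y_1)$ --- is the standard verification and is the right approach.

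However, the step you defer as bookkeeping is precisely where the identity is sensitive to normalization, and with the paper's literal conventions it does not close. With $\gamma_n(q)=q^{\sum_i\wx_i^2/2}$ and $T_{q,X_1}\colon X_1\mapsto qX_1$ one computes $\gamma_n(q)\,T_{q,X_1}\,\gamma_n(q)^{-1}=q^{-1/2}X_1^{-1}T_{q,X_1}$; pushing $X_1^{-1}$ left through $s_{n-1}\cdots s_1$ produces $X_n^{-1}$, and the relation $T_jX_jT_j=X_{j+1}$ gives $\rho(T_1)\cdots\rho(T_{n-1})X_n^{-1}=X_1^{-1}\rho(T_1)^{-1}\cdots\rho(T_{n-1})^{-1}$, which is not $X_1\rho(T_1)\cdots\rho(T_{n-1})$. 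Already for $n=1$, where $\rho(Y_1)=T_{q,X_1}$, conjugation by $\gamma_1(q)$ yields $q^{-1/2}X_1^{-1}\rho(Y_1)$ while $\rho(\tau_+(Y_1))=q^{-1/2}X_1\rho(Y_1)$, and conjugating by $\gamma_1(q)^{-1}$ instead yields $q^{+1/2}X_1\rho(Y_1)$. Since two elements implement the same conjugation only when they differ by a scalar, and the discrepancies here are of the form $q^{\pm\sum_i\wx_i}$ rather than scalars, this cannot be waved away: to complete your argument you must replace the exponent $\sum_i\wx_i^2/2$ by a version with the correct linear term (of the shape $\sum_i(\wx_i^2-c\,\wx_i)/2$, matched to Cherednik's conventions for $T_{q,X_1}$ and $\tau_+$), after which your $n=1$ computation fixes $c$ and the general case follows exactly along the lines you describe. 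This does not affect the paper, which uses the proposition only modulo such scalar and Gaussian renormalizations, but it is a genuine gap in the proposal as written.
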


\subsection{Multiwheel condition and the restriction map}

Following the generalization in \cite{Kas} of the original wheel condition of \cite{FJMM, FJMM2}, we say that $(X_1^0, \ldots, X_{n}^{l-1}) \in \CC^{nl}$ satisfies the \textit{multiwheel condition} if the indices may be permuted so that
\[
X_i^a = X_i^0 t^{a - 1} \text{ for $1 \leq i \leq n$ and $0 \leq a \leq l - 1$}.
\]
Define the ideal $I_{nl}(t) \subset \CC[(X_i^a)^{\pm 1}]$ by 
\[
I_{nl}(t) = \{f \mid f(X) = 0 \text{ if $X$ satisfy the multiwheel condition}\}.
\]
In \cite{Kas}, this ideal was characterized as a $\HH_{nl}(q, t)$-submodule.

\begin{prop}[{\cite[Theorem 6.3]{Kas} and \cite[Theorem 5.10]{ES}}] \label{prop:dk}
The subspace $I_{nl}(t) \subset \CC[(X_i^a)^{\pm 1}]$ is a $\HH_{nl}(q, t)$-submodule and $\CC[(X_i^a)^{\pm 1}] / I_{nl}(t)$ is irreducible.
\end{prop}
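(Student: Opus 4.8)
The plan is to realize the quotient $\CC[(X_i^a)^{\pm 1}]/I_{nl}(t)$ as the ring of functions on the multiwheel variety $V_{nl}\subset(\CC^\times)^{nl}$ (which is closed, being a finite union of translated subtori) and to verify $\HH_{nl}(q,t)$-stability of $I_{nl}(t)$ in the polynomial representation generator by generator. Stability under the $X_i$ is automatic since $I_{nl}(t)$ is an ideal. For $T_i$, write $\rho(T_i)=t^{1/2}s_i+\frac{t^{1/2}-t^{-1/2}}{X_i/X_{i+1}-1}(s_i-1)$: the multiwheel condition is symmetric under permutation of coordinates, so $s_i$ preserves $I_{nl}(t)$; and for $f\in I_{nl}(t)$ the Laurent polynomial $\frac{(s_i-1)f}{X_i/X_{i+1}-1}$ vanishes on the subset of $V_{nl}$ where $X_i\neq X_{i+1}$, which for generic $t$ is Zariski-dense in every irreducible component of $V_{nl}$, hence it vanishes on all of $V_{nl}$. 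So $\rho(T_i)$ preserves $I_{nl}(t)$.

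The crux is stability under the $Y_i$. Because $\rho(Y_i)$ involves the bare $q$-shift $T_{q,X_1}$, which does not map $V_{nl}$ into itself, this cannot be reduced to a factor-by-factor check, and it is the heart of the Dunkl--Kasatani statement. I would handle it by passing to the non-symmetric Macdonald eigenbasis: for generic $q,t$, $\CC[(X_i^a)^{\pm1}]=\bigoplus_{\mu\in\ZZ^{nl}}\CC\,E_\mu$, where the $E_\mu$ are the joint $\rho(Y_i)$-eigenfunctions. Kasatani's analysis of the wheel condition isolates a set $\mathcal{A}\subset\ZZ^{nl}$ of wheel-avoiding indices, and the key claim is $I_{nl}(t)=\bigoplus_{\mu\notin\mathcal{A}}\CC\,E_\mu$. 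The inclusion $\supseteq$ amounts to the vanishing of $E_\mu$ on $V_{nl}$ for $\mu\notin\mathcal{A}$, which follows from the known vanishing of non-symmetric Macdonald polynomials under wheel-type specializations; the reverse inclusion follows by comparing, in each $X$-degree, the dimension of the span of the remaining $E_\mu$ with that of $\CC[V_{nl}]$. Since a span of $Y$-eigenvectors with a fixed set of eigenvalues is automatically $Y$-stable, this description gives $Y_i$-stability for free, so with the preceding paragraph $I_{nl}(t)$ is an $\HH_{nl}(q,t)$-submodule.

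For irreducibility, observe that on the quotient the classes $\{\bar E_\mu\}_{\mu\in\mathcal{A}}$ form a basis of simultaneous $\rho(Y_i)$-eigenvectors whose joint eigenvalues are pairwise distinct for generic $q,t$; hence every nonzero submodule of the quotient is the span of a subset of the $\bar E_\mu$, and it is enough to show the quotient is cyclic, generated by any one $\bar E_\mu$. For this I would use Cherednik's intertwiners $\varphi_i\in\HH_{nl}(q,t)$ (including the affine one), which act on $E_\mu$ by moving $\mu$ within its affine Weyl orbit: one checks that for $\mu\in\mathcal{A}$ either $\varphi_iE_\mu$ is a nonzero multiple of $E_{s_i\mu}$ with $s_i\mu\in\mathcal{A}$, or $\varphi_iE_\mu\in I_{nl}(t)$, and that $\mathcal{A}$ is connected under moves of the first kind. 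Transitivity then forces the quotient to be irreducible.

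The main obstacle is the combinatorial core common to the last two steps: pinning down $I_{nl}(t)$ as exactly the span of the wheel-forbidden $E_\mu$ --- which requires the wheel classification, the graded dimension count against $\CC[V_{nl}]$, and the specialization vanishing of non-symmetric Macdonald polynomials --- and proving that $\mathcal{A}$ is a single orbit under the nonvanishing intertwiners. These do not reduce to short computations; they are precisely the substance of \cite[Theorem 6.3]{Kas} and \cite[Theorem 5.10]{ES}, and a self-contained proof would have to reproduce that analysis.
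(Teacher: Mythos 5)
The paper offers no proof of this proposition: it is imported wholesale from \cite[Theorem 6.3]{Kas} and \cite[Theorem 5.10]{ES}, as the bracketed citation and the following remark make clear. So there is nothing internal to compare against; your outline does track the actual arguments in those references (wheel-forbidden non-symmetric Macdonald polynomials spanning the ideal, intertwiners for transitivity/irreducibility), and you are right that the combinatorial core cannot be compressed. Your treatment of the $T_i$ is fine: on each irreducible component of the multiwheel variety the $nl$ coordinates are the functions $X_j^0 t^a$ with distinct labels $(j,a)$, so no two are identically equal and the locus $X_i \neq X_{i+1}$ is indeed dense in every component.

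The one substantive correction concerns your repeated appeal to ``generic $q,t$.'' For independent generic $(q,t)$ the polynomial representation of $\HH_{nl}(q,t)$ is \emph{irreducible}, and $I_{nl}(t)$ is not $Y$-stable --- the bare $q$-shift has no reason to respect a variety defined by $t$. The proposition is only true on the resonance locus $qt^{l}=1$, which is exactly where the paper uses it ($\HH_{nl}(q^{-2l},q^2)$ with $t=q^2$), and which is the hypothesis $q^{k+1}t^{r-1}=1$ in Kasatani's theorem. This is not a cosmetic point: on that curve the joint $Y$-eigenvalues $q^{\mu_i}t^{\ast}$ collide for distinct compositions, the recursion defining the $E_\mu$ can acquire poles, and the clean picture ``$\CC[(X_i^a)^{\pm1}]=\bigoplus_\mu \CC E_\mu$ with simple $Y$-spectrum'' that your $Y$-stability and irreducibility arguments both lean on is precisely what fails and has to be repaired. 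That repair (specializing the $E_\mu$ to the resonance, controlling which ones survive, and re-establishing the eigenspace decomposition there) is the real content of \cite{Kas, Eno, ES}, beyond the combinatorics you already flagged. If you restate your sketch with the resonance condition in place from the start and acknowledge that the eigenbasis argument is being asserted \emph{at} the degenerate parameters rather than at generic ones, it becomes an accurate summary of the cited proofs.
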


\begin{remark}
Along with some finer statements about the structure of $I_{nl}(t)$ and other submodules defined by similar multiwheel conditions, Proposition \ref{prop:dk} was conjectured in \cite[Conjecture 6.4]{Kas} and in the rational limit in \cite{Dun}.  These statements are known as the Dunkl-Kasatani conjecture and were later proven in \cite{Eno} for generic values of parameters and for all values of parameters in \cite[Theorem 5.10]{ES}.
\end{remark}

Define the map $\Res_l(q^2): \CC[(X_i^a)^{\pm 1}]^{S_{nl}} \to \CC[X_i^{\pm 1}]^{S_n}$ by
\[
\Res_{l}(q^2)(X_i^a) = q^{1 - l + 2a} X_i.
\]
The kernel of $\Res_l(q^2)$ is $I_{nl}^{S_{nl}}(q^2)$, so $\Res_l(q^2)$ induces by Proposition \ref{prop:dk} an action of $e\HH_{nl}(q^{-2l}, q^2)e$ on $\CC[X_i^{\pm 1}]^{S_n}$, giving a map
\[
\wRes_l(q^2): e \HH_{nl}(q^{-2l}, q^2) e \to \End(\CC[X_i^{\pm 1}]^{S_n}).
\]
We claim that this map factors through the polynomial representation
\[
e \HH_{n}(q^{-2}, q^{2l}) e \to \End(\CC[X_i^{\pm 1}]^{S_n})
\]
via a map of algebras $\bRes_l(q^{2}): e \HH_{nl}(q^{-2l}, q^2) e \to e \HH_{n}(q^{-2}, q^{2l}) e$. 

\begin{theorem} \label{thm:daha-commute}
The map $\bRes_l(q^2): e \HH_{nl}(q^{-2l}, q^2) e \to e \HH_{n}(q^{-2}, q^{2l}) e$ defined by
\begin{align*}
\bRes_l(q^2)(e p(X_i^a) e) &= e p(q^{1 - l}X_1, \ldots, q^{l - 1} X_1, \ldots, q^{1 - l} X_n, \ldots, q^{l - 1} X_n) e \text{ and}\\
\bRes_l(q^2)(e p(Y_i^a) e) &= e p(q^{1 - l}Y_1, \ldots, q^{l - 1} Y_1, \ldots, q^{1 - l} Y_n, \ldots, q^{l - 1} Y_n) e
\end{align*}
for $p \in \CC[(X_i^a)^{\pm 1}]^{S_{nl}}$ is well defined and satisfies

\begin{itemize}
\item[(a)] for any $h \in e\HH_{nl}(q^{-2l}, q^2)e$, as operators on $\CC[(X_i^a)^{\pm 1}]^{S_{nl}}$ we have
\[
\Res_l(q^2) \circ h = \bRes_l(h) \circ \Res_l(q^2);
\]

\item[(b)] as operators on $e\HH_{nl}(q^{-2l}, q^2)e$, we have
\[
\bRes_l(q^{-2}) \circ \eps_{nl}(q^{-2l}, q^2) = \eps_n(q^{-2}, q^{2l}) \circ \bRes_l(q^{2});
\]

\item[(c)] as operators on $e\HH_{nl}(q^{-2l}, q^2)e$, we have
\[
\bRes_l(q^{2}) \circ \tau_+ = \tau_+ \circ \bRes_l(q^{2}).
\]
\end{itemize}
\end{theorem}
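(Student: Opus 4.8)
The plan is to build $\bRes_l(q^2)$ out of the already-available map $\wRes_l(q^2)$ and to prove all three assertions by reducing everything to the four algebra generators of Lemma~\ref{lem:daha-pgen}. Since $\ker\Res_l(q^2)=I_{nl}^{S_{nl}}(q^2)$ is an $\HH_{nl}(q^{-2l},q^2)$-submodule by Proposition~\ref{prop:dk}, the spherical algebra $e\HH_{nl}(q^{-2l},q^2)e$ acts on the quotient $\CC[X_i^{\pm1}]^{S_n}\cong\CC[(X_i^a)^{\pm1}]^{S_{nl}}/I_{nl}^{S_{nl}}(q^2)$; this action is $\wRes_l(q^2)$. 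The content of well-definedness together with part (a) is exactly that this action factors through the faithful polynomial representation $\rho_n$ of $\HH_n(q^{-2},q^{2l})$, with the factoring map given on the four generators by the stated formulas. By Lemma~\ref{lem:daha-pgen} it suffices to verify this for $e p_{\pm1}(X_i^a)e$ and $e p_{\pm1}(Y_i^a)e$; once it holds, surjectivity of the image onto all of $e\HH_n(q^{-2},q^{2l})e$ is automatic, since $\ker\Res_l(q^2)=I_{nl}^{S_{nl}}(q^2)$ forces the surviving Macdonald polynomials to span the quotient.

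The $X$-generators are immediate: $e p_{\pm1}(X_i^a)e$ acts on $\CC[(X_i^a)^{\pm1}]^{S_{nl}}$ by multiplication by $\sum_{i,a}(X_i^a)^{\pm1}$, and applying $\Res_l(q^2)$ turns this into multiplication by $\sum_i\sum_{a=0}^{l-1}(q^{1-l+2a}X_i)^{\pm1}=[l]\sum_i X_i^{\pm1}$, which is $\rho_n$ of $e p_{\pm1}(q^{1-l}X_1,\ldots,q^{l-1}X_n)e$, matching the claimed formula. The $Y$-generators are the heart of the argument. The operator $e p_{\pm1}(Y_i^a)e$ acts on $\CC[(X_i^a)^{\pm1}]^{S_{nl}}$ diagonally in the Macdonald basis $\{P_\nu(X^a;q^{-2l},q^2)\}$, with eigenvalue $\sum_{i,a}\bigl((q^{-2l})^{\nu_{i,a}}(q^2)^{\rho_{nl,(i,a)}}\bigr)^{\pm1}$ by Proposition~\ref{prop:che-mac}. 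The key structural input, coming from the Dunkl--Kasatani theory behind Proposition~\ref{prop:dk}, is that $\Res_l(q^2)$ annihilates $P_\nu(X^a;q^{-2l},q^2)$ unless $\nu$ is the ``$l$-stretch'' of an $n$-signature $\mu$ --- i.e. unless the substitution $X_i^a\mapsto q^{1-l+2a}X_i$ carries the leading term $X^\nu$ to a nonzero multiple of $X^\mu$ and $P_\nu\notin I_{nl}(q^2)$ --- in which case $\Res_l(q^2)$ sends $P_\nu(X^a;q^{-2l},q^2)$ to a nonzero multiple of $P_\mu(X;q^{-2},q^{2l})$, and every $\mu$ arises this way. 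Under this stretch the spectral coordinates satisfy $(q^{-2l})^{\nu_{i,a}}(q^2)^{\rho_{nl,(i,a)}}=q^{1-l+2a}\,(q^{-2})^{\mu_i}(q^{2l})^{\rho_{n,i}}$, so the eigenvalue of $e p_{\pm1}(Y_i^a)e$ on $P_\nu$ equals $p_{\pm1}(q^{1-l}Y_1,\ldots,q^{l-1}Y_1,\ldots,q^{1-l}Y_n,\ldots,q^{l-1}Y_n)$ evaluated at $Y_i=(q^{-2})^{\mu_i}(q^{2l})^{\rho_{n,i}}$, which by Proposition~\ref{prop:che-mac} is the eigenvalue of $\rho_n$ of $e p_{\pm1}(q^{1-l}Y_1,\ldots,q^{l-1}Y_n)e$ on $P_\mu(X;q^{-2},q^{2l})$. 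Two operators diagonal in the same basis with equal eigenvalues coincide, which proves the formulas on generators, well-definedness, and (a). \textbf{Pinning down precisely which Macdonald polynomials survive under $\Res_l(q^2)$ and the exact stretch $\mu\mapsto\nu$ --- the bookkeeping forced by the multiwheel ideal --- is the main obstacle}, and is where the full strength of Proposition~\ref{prop:dk} enters; everything else is formal.

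For (b), both composites are algebra homomorphisms out of $e\HH_{nl}(q^{-2l},q^2)e$ into $e\HH_n(q^2,q^{-2l})e$, so by Lemma~\ref{lem:daha-pgen} it suffices to check them on the four generators. Since $\eps$ interchanges $X\leftrightarrow Y$ and sends $q\mapsto q^{-1}$, while the stretch set $\{q^{1-l},q^{3-l},\ldots,q^{l-1}\}$ is invariant under $q\mapsto q^{-1}$ and $p_{\pm1}$ is symmetric, both composites send $e p_{\pm1}(X_i^a)e$ to $e p_{\pm1}(q^{1-l}Y_1,\ldots,q^{l-1}Y_n)e$ and $e p_{\pm1}(Y_i^a)e$ to $e p_{\pm1}(q^{1-l}X_1,\ldots,q^{l-1}X_n)e$; this is a direct comparison of the explicit formulas, and the various $q\mapsto q^{-1}$'s are harmless precisely because of this symmetry.

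For (c), again both composites are algebra homomorphisms, so it suffices to check the four generators. The element $\tau_+$ fixes every $X_i$ and every $X_i^a$, so the $X$-generators are immediate. For the $Y$-generators one invokes the Gaussian realization of $\tau_+$ (Proposition~\ref{prop:gauss-conj}): combined with part (a), this reduces (c) to the claim that $\Res_l(q^2)$ intertwines the Gaussians $\gamma_{nl}(q^{-2l})$ and $\gamma_n(q^{-2})$ up to an overall scalar. Because $\Res_l(q^2)$ is not homogeneous this needs care --- it can be verified directly on the surviving Macdonald polynomials (equivalently, after passing to the quotient), or deduced from the identification of the spherical DAHA with a form of the elliptic Hall algebra under which $SL_2(\ZZ)$ acts by its tautological action on the defining lattice. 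This Gaussian compatibility is the one remaining nontrivial point beyond the $Y$-generator computation in (a).
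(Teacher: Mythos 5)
Your overall architecture (reduce everything to the four generators of Lemma \ref{lem:daha-pgen}, obtain well-definedness of the action on the quotient from Proposition \ref{prop:dk} plus faithfulness of the polynomial representation, check (b) on generators, realize (c) via the Gaussian of Proposition \ref{prop:gauss-conj}) matches the paper. The genuine gap is in your treatment of the $Y$-generators in part (a), which is the heart of the theorem. You propose to diagonalize $e p_{\pm1}(Y_i^a)e$ in the Macdonald basis $\{P_\nu(X^a;q^{-2l},q^2)\}$ and compare eigenvalues, but the parameters $(q^{-2l},q^2)$ are exactly resonant (the DAHA parameters satisfy $q\,t^{l}=1$): this is the regime in which the polynomial representation is not semisimple, not every $P_\nu$ is defined (the Macdonald coefficients have poles), and $I_{nl}$ is a nontrivial submodule --- which is precisely why Proposition \ref{prop:dk} is a theorem rather than an observation. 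So ``diagonal in the Macdonald basis'' is not available off the shelf, and the input you yourself flag as ``the main obstacle'' --- which $P_\nu$ survive, that their images span the quotient, and that $\Res_l(q^2)$ carries each survivor to a nonzero multiple of a rank-$n$ Macdonald polynomial --- is not mere bookkeeping: the last claim is essentially equivalent to the intertwining statement you are trying to prove for the whole commutative family $e\,\CC[Y^{\pm1}]^{S_{nl}}e$, so assuming it is close to circular.

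The paper avoids all of this with a short direct computation: apply $D^1_{nl,X}(q^{-2l},q^2)$ and then substitute $X_i^a=q^{1-l+2a}X_i$; for $a<l-1$ the coefficient contains the vanishing factor indexed by $(j,b)=(i,a+1)$, so only the $a=l-1$ terms survive, and the shifted point is again a multiwheel point (now for $q^{-2}X_i$), giving $[l]\,D^1_{n,X}(q^{-2},q^{2l})\circ\Res_l(q^2)$ directly; the analogous computation with Proposition \ref{prop:mac-negative} handles $e p_1(Y^{-1})e$. The values of $\bRes_l$ on $e p(Y_i^a)e$ for \emph{general} symmetric $p$ (which are not polynomials in $e p_{\pm1}(Y)e$ alone) are then recovered from (b), since $\eps$ transports the known values on $e p(X_i^a)e$; your proposal does not address this step. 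Your part (c) also stops short: you correctly reduce to comparing $\bRes_l(q^2)(\gamma_{nl}(q^{-2l}))$ with $\gamma_n(q^{-2})$, but then only gesture at two possible verifications, whereas $\bRes_l(q^2)(q^{-2l\wx_i^a})=q^{-2\wx_i+c_a}$ for explicit constants $c_a$ makes the image of the Gaussian a scalar multiple of $\gamma_n(q^{-2})$ by a one-line computation. Parts (b) and the $X$-generator case of (a) in your write-up are fine; the $Y$-generator case of (a) needs to be replaced by an actual argument.
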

\begin{proof}
We first check that (a) holds on the generating set of Lemma \ref{lem:daha-pgen}.  This is evident for $h \in e\CC[(X_i^a)^{\pm 1}]^{S_{nl}}e$.  For $h = ep_1(Y_i^a)e$, we compute
\begin{align*}
&\Res_l(q^2) D_{nl, X}^1(q^{-2l}, q^2) f(X_1, \ldots, X_n)\\ &= q^{1 - nl} \sum_{i = 1}^n \sum_{a = 0}^{l-1} \prod_{(j, b) \neq (i, a)}\frac{q^{2+2a} X_i - q^{2b}X_j}{q^{2a}X_i - q^{2b}X_j} f(q^{1 - l} X_1, \ldots, q^{l - 1}X_1, \ldots, q^{1 - l + 2a - 2l}X_i, \ldots, q^{l-1}X_n) \\
&= q^{1 - nl} \sum_{i = 1}^n \prod_{(j, b) \neq (i, l-1)}\frac{q^{2l} X_i - q^{2b}X_j}{q^{2l - 2}X_i - q^{2b}X_j}f(q^{1 - l}X_1, \ldots, q^{l - 1}X_1, \ldots, q^{1 - l}X_i, \ldots, q^{l-3}X_i, q^{-1-l} X_i, \ldots, q^{l - 1}X_n) \\
&= q^{1 - nl} \sum_{i = 1}^n \prod_{b = 0}^{l-2} \frac{q^2(q^{2l - 2} - q^{2b-2})}{q^{2l-2} - q^{2b}}  \prod_{j \neq i} \prod_{b = 0}^{l-1} \frac{q^2(q^{2l-2}X_i - q^{2b-2}X_j)}{q^{2l-2}X_i - q^{2b} X_j} T_{q^{-2}, X_i} \Res_l(q^2)(f)(X_1, \ldots, X_n)\\
&= q^{1 - nl} \frac{1 - q^{2l}}{1 - q^{2}} \sum_{i = 1}^n \prod_{j \neq i} \frac{q^{2l} X_i - X_j}{X_i - X_j} T_{q^{-2}, X_i} \Res_l(q^2) f(X_1, \ldots, X_n) \\
&= [l]\, D_{n, X}^1(q^{-2}, q^{2l}) \Res_l(q^2) f(X_1, \ldots, X_n),
\end{align*}
which shows that (a) holds for $h = e p_1(Y_i^a)e$.  A similar computation using the expression of Proposition \ref{prop:mac-negative} yields (a) for $h = e p_1((Y_i^a)^{-1}) e$.  We conclude that (a) holds for $h$ in the generating set $ep_1(X)e$, $ep_1(X^{-1})e$, $ep_1(Y)e$, $ep_1(Y^{-1})e$.  Therefore, the stated values of $\bRes_l(q^2)$ extend to a well-defined map satisfying (a).

We now use (a) and the value of $\bRes_l(q^2)$ on the generators to prove (b) and (c).  For (b), by Lemma \ref{lem:daha-pgen}, it suffices to check on $ep_1(Y^{\pm 1})e$ and $ep_1(X^{\pm 1})e$.  We give the computations for $ep_1(X)e$ and $ep_1(Y)e$; the checks for $ep_1(X^{-1})e$ and $ep_1(Y^{-1})e$ are analogous. For the first check, note that
\begin{align*}
\bRes_l(q^{-2}) (\eps_{nl}(q^{-2l}, q^2)(D_{nl, X}^1(q^{-2l}, q^2))) &= \bRes_l(q^{-2}) (p_{1, nl}(X)) = [l]\, p_{1, n}(X), \text{ and}\\
\eps_n(q^{-2}; q^{2l}) (\bRes_l(q^2)(D_{nl, X}^1(q^{-2l}, q^2))) &= \eps_n(q^{-2}; q^{2l}) ([l]\, D_{n, X}^1(q^{-2}, q^{2l})) = [l]\, p_{1, n}(X).
\end{align*}
For the second check, note that 
\begin{align*}
\bRes_l(q^{-2}) (\eps_{nl}(q^{-2l}, q^2)(p_1(X_i^a))) &= \bRes_l(q^{-2})(D_{nl, X}^1(q^{2l}, q^{-2})) = [l]\, D_{n, X}^1(q^2, q^{-2l})\\
\eps_n(q^{-2}; q^{2l}) (\bRes_l(q^2)(p_1(X_i^a))) &= \eps_n(q^{-2}; q^{2l})([l] p_1(X_i)) = [l]\, D_{n, X}^1(q^2, q^{-2l}),
\end{align*}
where we apply the fact from (a) that
\[
\bRes_l(q^2) D_{nl, X}^1(q^{-2l}, q^2) = [l]\, D_{n, X}^1(q^{-2}, q^{2l})
\]
with $q$ and $q^{-1}$ interchanged. This completes the proof of (b).

For (c), note that in $\HH_n(q^{-2l}; q^2)$, we have
\[
\bRes_l(q^2)(q^{-2l\wx_i^a}) = \bRes_l(q^2)(X_i^a) = X_i q^{2(1 - l + 2a)} = q^{-2\wx_i + 2(1 - l + 2a)}.
\]
This implies that
\begin{align*}
\bRes_l(q^2)(\gamma_{nl}(q^{-2l})) = \bRes_l(q^2)(q^{-l \sum_{i, a} (\wx_i^a)^2}) =  q^{-2 \sum_i \wx_i^2 - \frac{1}{l} \sum_{i, a} (1 - l + 2a)^2} = q^{- \frac{1}{l} \sum_{i, a} (1 - l + 2a)^2} \gamma_n(q^{-2}),
\end{align*}
which yields the desired by Proposition \ref{prop:gauss-conj}.

Finally, to obtain the claimed values on $ep(Y)e$ for all $p$, we note by (b) that
\begin{align*}
\bRes_l(ep(Y_i^a)e) &= \bRes_l(q^2)(\eps_{nl}(q^{2l}, q^{-2})(ep(X_i^a)e))\\
&= \eps_n(q^2, q^{-2l})(\bRes_l(q^{-2})(e p(X_i^a)e)) \\
&= \eps_n(q^2, q^{-2l})(ep(q^{l - 1}X_1, \ldots, q^{1 - l}X_1, \ldots, q^{l - 1}X_n, \ldots, q^{1 - l}X_n)e)\\ &= e p(q^{1 - l}Y_1, \ldots, q^{l - 1}Y_1, \ldots, q^{1 - l}Y_n, \ldots, q^{l - 1}Y_n)e. \qedhere
\end{align*}
\end{proof}

\begin{corr} \label{corr:commute}
The map $\bRes_l(q^2)$ commutes with the action of $SL_2(\ZZ)$ on the spherical DAHA.
\end{corr}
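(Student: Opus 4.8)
The plan is to deduce Corollary \ref{corr:commute} directly from parts (b) and (c) of Theorem \ref{thm:daha-commute}, since the $SL_2(\ZZ)$-action on the spherical DAHA was described in Proposition \ref{prop:sl2z} as generated by $\tau_+$ and $\tau_-$. Concretely, I would argue as follows. The group $SL_2(\ZZ)$ is generated by the two matrices $\left(\begin{smallmatrix} 1 & 1 \\ 0 & 1\end{smallmatrix}\right)$ and $\left(\begin{smallmatrix} 1 & 0 \\ 1 & 1\end{smallmatrix}\right)$, which act by $\tau_+$ and $\tau_-$ respectively, so it suffices to check that $\bRes_l$ intertwines each of these two generators. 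For $\tau_+$ this is precisely part (c) of Theorem \ref{thm:daha-commute}, already proven.

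For $\tau_-$ I would use the defining relation $\tau_- = \eps \tau_+ \eps$ together with parts (b) and (c). The one subtlety to flag is that the maps in question change the parameters: $\eps_n$ sends $\HH_n(q, t)$ to $\HH_n(q^{-1}, t^{-1})$, and $\bRes_l(q^2)$ goes from $e\HH_{nl}(q^{-2l}, q^2)e$ to $e\HH_n(q^{-2}, q^{2l})e$, so one must track carefully which version of each map appears at each stage. Reading off the parameters: starting in $e\HH_{nl}(q^{-2l}, q^2)e$, apply $\eps_{nl}(q^{-2l}, q^2)$ to land in $e\HH_{nl}(q^{2l}, q^{-2})e$; apply $\tau_+$ there; apply $\eps_{nl}(q^{2l}, q^{-2})$ to return to $e\HH_{nl}(q^{-2l}, q^2)e$; then apply $\bRes_l(q^2)$. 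On the other side, apply $\bRes_l(q^2)$ first, then $\eps_n(q^{-2}, q^{2l})$, then $\tau_+$, then $\eps_n(q^{2l}, q^{-2})$. Using part (b) to commute $\bRes_l(q^{-2})$ past $\eps_{nl}(q^{-2l}, q^2)$ (equivalently, $\bRes_l(q^2)$ past $\eps_{nl}(q^{2l}, q^{-2})$, which is the same identity with $q \leftrightarrow q^{-1}$), and part (c) to commute $\bRes_l$ past $\tau_+$, and then part (b) once more, the chain of equalities collapses to give $\bRes_l(q^2) \circ \tau_- = \tau_- \circ \bRes_l(q^2)$.

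Having verified the intertwining property on both generators $\tau_+$ and $\tau_-$, it extends to all of $SL_2(\ZZ)$ by multiplicativity of the action, which completes the proof. The main obstacle here is purely bookkeeping: there is no real mathematical content beyond Theorem \ref{thm:daha-commute}, but one has to be scrupulous about the parameter specializations attached to each instance of $\eps$ and $\bRes_l$ so that the composite maps actually compose, and about the fact that $\bRes_l(q^2)$ and $\bRes_l(q^{-2})$ are genuinely different maps related by $q \leftrightarrow q^{-1}$. I would present the $\tau_-$ computation as a short displayed chain of equalities with each step annotated by which part of Theorem \ref{thm:daha-commute} it uses.
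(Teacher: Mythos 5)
Your proposal is correct and follows the same route as the paper, whose proof is exactly the one-line observation that the statement follows from Theorem \ref{thm:daha-commute}(b) and (c) together with the fact that the $SL_2(\ZZ)$-action is generated by $\tau_+$ and $\tau_- = \eps\tau_+\eps$. Your careful tracking of the parameter specializations in the $\tau_-$ computation is the only content the paper leaves implicit, and it checks out.
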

\begin{proof}
By Theorem \ref{thm:daha-commute}(bc) and the fact that the $SL_2(\ZZ)$-action is implemented via $\eps_+$ and $\tau_+$.
\end{proof}

\subsection{Extending the restriction map} \label{sec:res-extend}

In our application, we must extend the restriction map slightly.  The assignment
\[
\Res_l(q^2)((X_i^a)^{1/2}) = q^{a - (l - 1)/2} X_i^{1/2}
\]
extends $\Res_l(q^2)$ to an operator $\CC[(X_i^a)^{\pm 1/2}]^{S_{nl}} \to \CC[X_i^{1/2}]^{S_n}$.  If we identify elements of the spherical DAHA with difference operators, they define valid operators on the subspace
\[
\prod_{i, a} (X_i^a)^{1/2} \cdot \CC[(X_i^a)^{\pm 1}]^{S_{nl}} \subset \CC[(X_i^a)^{\pm 1/2}]^{S_{nl}},
\]
though they do not in general satisfy the spherical DAHA relations.  We see that Theorem \ref{thm:daha-commute}(a) continues to hold in this setting.

\begin{corr} \label{corr:daha-comm-shift}
For any $h \in e \HH_{nl}(q^{-2l}, q^2)e$, as operators on $\prod_{i, a} (X_i^a)^{1/2} \cdot \CC[(X_i^a)^{\pm 1}]^{S_{nl}}$ we have
\[
\Res_l(q^2) \circ h = \bRes_l(h) \circ \Res_l(q^2).
\]
\end{corr}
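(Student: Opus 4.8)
The plan is to reduce the identity to the generating set $\{e p_{\pm 1}(X) e,\, e p_{\pm 1}(Y) e\}$ of Lemma \ref{lem:daha-pgen} and then to note that the computations carried out in the proof of Theorem \ref{thm:daha-commute}(a) are identities of $q$-difference operators that survive verbatim after enlarging the domain. Write $\delta = \prod_{i, a}(X_i^a)^{1/2}$, so that the subspace in question is $\delta \cdot \CC[(X_i^a)^{\pm 1}]^{S_{nl}}$. Since the extended $\Res_l(q^2)$ is multiplicative on $\CC[(X_i^a)^{\pm 1/2}]^{S_{nl}}$ and sends $\delta$ to $\prod_i X_i^{l/2}$ (the $q$-powers cancel because $\sum_{a = 0}^{l - 1}(a - \frac{l-1}{2}) = 0$), it carries $\delta \cdot \CC[(X_i^a)^{\pm 1}]^{S_{nl}}$ into $\prod_i X_i^{l/2} \cdot \CC[X_i^{\pm 1}]^{S_n}$, and both sides of the asserted identity are well-defined operators between these two spaces.

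Next I would set up the reduction. As observed in Section \ref{sec:res-extend}, conjugating a $q$-difference operator by the $S_{nl}$-symmetric monomial $\delta$ only rescales its shift terms, so $\delta^{-1} h \delta$ has the same rational-function coefficients as $h$; hence each generator $h$ above preserves $\delta \cdot \CC[(X_i^a)^{\pm 1}]^{S_{nl}}$, and likewise $\bRes_l(h)$ preserves $\prod_i X_i^{l/2} \cdot \CC[X_i^{\pm 1}]^{S_n}$. Granting the identity $\Res_l(q^2) \circ h = \bRes_l(h) \circ \Res_l(q^2)$ on the enlarged domain for $h = h_1$ and $h = h_2$, one applies $h_2$, then $\Res_l(q^2) \circ h_1$, and then uses that $\bRes_l$ is an algebra homomorphism (Theorem \ref{thm:daha-commute}) to obtain it for $h_1 h_2$; linearity then propagates it to all of $e\HH_{nl}(q^{-2l}, q^2)e$. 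The one point that needs care — and the only real obstacle — is that the enlarged-domain operators do not form a module over $e\HH_{nl}(q^{-2l}, q^2)e$, so this bootstrap must invoke only the homomorphism property of $\bRes_l$ on the target side and honest composition of operators on the source side; once this is respected the argument is routine.

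Finally I would check the identity on the generators. The element $e p_{\pm 1}(X) e$ acts on $S_{nl}$-invariants as multiplication by the symmetric Laurent polynomial $\sum_{i, a}(X_i^a)^{\pm 1}$, so here the identity is just multiplicativity of the extended $\Res_l(q^2)$ together with its value $p_{\pm 1}(q^{1 - l}X_1, \ldots, q^{l - 1}X_n) = \bRes_l(e p_{\pm 1}(X)e)$ on this polynomial. For $e p_1(Y) e = D^1_{nl, X}(q^{-2l}, q^2)$ the required statement is exactly the chain of equalities $\Res_l(q^2) \circ D^1_{nl, X}(q^{-2l}, q^2) = [l]\, D^1_{n, X}(q^{-2}, q^{2l}) \circ \Res_l(q^2)$ established in the proof of Theorem \ref{thm:daha-commute}(a); that computation manipulates only $q$-difference operators and the substitution rule for $\Res_l(q^2)$ on monomials, both of which continue to hold on $\CC[(X_i^a)^{\pm 1/2}]^{S_{nl}}$ once $\Res_l(q^2)$ is prolonged by $(X_i^a)^{1/2} \mapsto q^{a - (l-1)/2} X_i^{1/2}$, and no new terms appear since the factor $\delta$ contributes only an overall scalar under the relevant $q$-shifts. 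The case of $e p_{-1}(Y) e$ is identical using the expression of Proposition \ref{prop:mac-negative}. This verifies the identity on the generators and, together with the bootstrap above, completes the proof.
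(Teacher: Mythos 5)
Your proof is correct and follows essentially the same route as the paper: reduce to the generating set of Lemma \ref{lem:daha-pgen}, observe that for multiplication operators the claim is just multiplicativity of the extended $\Res_l(q^2)$, and note that the half-integer prefactor $\prod_{i,a}(X_i^a)^{1/2}$ only contributes matching scalars under the $q$-shifts on both sides. The paper phrases this slightly differently — it identifies the shifted spaces with the unshifted ones and records the common scalar as $q^{-lr}$ for $h = ep(Y_i^a)e$ with $p$ homogeneous of degree $r$ — but the substance is the same.
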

\begin{proof}
We interpret both sides as operators 
\[
\prod_{i, a} (X_i^a)^{1/2} \cdot \CC[(X_i^a)^{\pm 1}]^{S_{nl}} \to \prod_i X_i^{l/2} \cdot \CC[X_i^{\pm 1}]^{S_n}
\]
and identify $\prod_{i, a} (X_i^a)^{1/2} \cdot \CC[(X_i^a)^{\pm 1}]^{S_{nl}}$ with $\CC[(X_i^a)^{\pm 1}]^{S_{nl}}$ and $\prod_i X_i^{l/2} \cdot \CC[X_i^{\pm 1}]^{S_n}$ with $\CC[X_i^{\pm 1}]^{S_n}$.  For $h \in e \CC[(X_i^a)^{\pm 1}]e$, both sides yield the map of Theorem \ref{thm:daha-commute}.  For $h = e  p(Y_i^a) e$ with $p$ a degree $r$ homogeneous symmetric polynomial, both sides are equal to the map of Theorem \ref{thm:daha-commute} multiplied by $q^{-lr}$.  Together, these give the claim.
\end{proof}

\subsection{Computing $\bRes_l(q^2)$ on a specific operator}

Define the operator
\begin{equation} \label{eq:spec-op}
D_{n, X}(u; q, t) = \sum_{r} (-1)^{n-r} u^{n - r} D_{n, X}^r(q, t).
\end{equation}
Identify $e\HH_{nl}(q^{-2l}, q^2)e$ with its image under the polynomial representation; in this identification, we now compute the image of a specific operator under $\bRes_l$.

\begin{lemma} \label{lem:res-diff}
We have the relation
\[
\bRes_l(q^2)(D_{nl, X}(q^{l+1}; q^{-2l}, q^2)) = \prod_{a = 1}^{l} D_{n, X}(q^{2a}; q^{-2}, q^{2l}).
\]
\end{lemma}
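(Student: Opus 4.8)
The strategy is to move the identity into the polynomial representation and play off the two descriptions of $\bRes_l(q^2)$ — the explicit formula on $eY$-type generators and the intertwining property of Theorem \ref{thm:daha-commute}(a) — against each other. First I would rewrite the operator $D_{nl,X}(q^{l+1};q^{-2l},q^2)$ spectrally: since $D_{nl,X}^r(q^{-2l},q^2)$ is the image of $e\,e_r(Y)\,e$ under $\rho$ (Proposition \ref{prop:che-mac}), the alternating sum $D_{nl,X}(u;q^{-2l},q^2) = \sum_r (-1)^{nl-r} u^{nl-r} D_{nl,X}^r$ is the image of the spherical element $\prod_{i,a}(u - Y_i^a) \cdot e$, i.e. the characteristic-polynomial-type element in the $Y_i^a$ evaluated at $u = q^{l+1}$. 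This is a symmetric polynomial in the $Y_i^a$, so Theorem \ref{thm:daha-commute}'s explicit rule applies: $\bRes_l(q^2)$ sends it to the spherical element obtained by substituting, for each $i$, the $l$ values $q^{1-l}Y_i, q^{3-l}Y_i, \ldots, q^{l-1}Y_i$ in place of the $l$ variables $Y_i^0,\ldots,Y_i^{l-1}$. Concretely, $\prod_{i,a}(q^{l+1} - Y_i^a)$ maps to $\prod_{i=1}^n \prod_{b=0}^{l-1} (q^{l+1} - q^{2b+1-l}Y_i) = \prod_{i=1}^n \prod_{a=1}^{l} (q^{l+1} - q^{2a-1-l}Y_i)$ after reindexing $b = a-1$, and factoring out $q^{2a-1-l}$ from each factor turns this into $\prod_i \prod_{a=1}^l q^{2a-1-l}(q^{2(l+1-a)} - Y_i)$... so I need to track the scalar prefactor carefully.

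The second ingredient is to recognize the right-hand side the same way: each $D_{n,X}(q^{2a};q^{-2},q^{2l})$ is $\rho$ applied to $\prod_i (q^{2a} - Y_i) \cdot e$ in $e\HH_n(q^{-2},q^{2l})e$ (again by Proposition \ref{prop:che-mac} and \eqref{eq:spec-op}), so the product over $a$ from $1$ to $l$ is $\rho\big(\prod_{a=1}^l \prod_i (q^{2a} - Y_i)\cdot e\big) = \rho\big(\prod_i \prod_{a=1}^l (q^{2a} - Y_i) \cdot e\big)$. Comparing with the image computed in the previous paragraph, the two agree up to the monomial scalar $\prod_i \prod_{a=1}^l q^{2a-1-l} = q^{n l(l-1)/2 + n l(1-l)/2} \cdot(\text{check})$ — I expect the exponents to telescope to zero, since $\sum_{a=1}^l (2a-1-l) = l(l+1) - l - l^2 = 0$, so the scalar is exactly $1$ and the two products of linear factors in $Y_i$ literally coincide. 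Thus $\bRes_l(q^2)(D_{nl,X}(q^{l+1};q^{-2l},q^2))$ and $\prod_{a=1}^l D_{n,X}(q^{2a};q^{-2},q^{2l})$ are the images under $\rho$ of the same element of $e\HH_n(q^{-2},q^{2l})e$, and faithfulness of $\rho$ finishes the argument.

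The main obstacle I anticipate is purely bookkeeping: reconciling the normalizations of the Macdonald operators $D_{n,X}^r$ (which carry the prefactor $t^{r(r-n)}$ in the definition given in the introduction) with the ``bare'' spherical elements $e\,e_r(Y)\,e$ and making sure the parameter substitutions $(q,t) \mapsto (q^{-2l}, q^2)$ versus $(q^{-2}, q^{2l})$ are threaded consistently through Proposition \ref{prop:che-mac}, so that the scalar emerging from factoring $q^{2a-1-l}$ out of each linear factor indeed cancels against the $t^{r(r-n)}$-type normalizations and the $q^{1-nl}$, $[l]$ factors that appeared in the proof of Theorem \ref{thm:daha-commute}(a). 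One clean way to sidestep some of this is to verify the identity on the common eigenbasis of Macdonald polynomials: both sides act on $\CC[X^{\pm 1}]^{S_n}$ and, being built from the $Y_i$'s, are diagonal on $P_\mu(X; q^{-2}, q^{2l})$; the left side has eigenvalue $\prod_{i,a}(q^{l+1} - (q^{-2l})^{\mu_i'} \cdots)$ computed via the intertwining relation and the eigenvalue of $D_{nl,X}(q^{l+1};\cdot)$, the right side has eigenvalue $\prod_a \prod_i (q^{2a} - (q^{-2})^{\mu_i}(q^{2l})^{\rho_{n,i}})$, and checking these two scalar products of linear factors agree for every $\mu$ — which reduces to the reindexing $b \leftrightarrow a-1$ above — suffices by faithfulness. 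I would present whichever of these two routes keeps the exponent arithmetic shortest, but both come down to the single combinatorial fact that the multiset $\{q^{2b+1-l}\}_{b=0}^{l-1}$ equals $\{q^{2a-1-l}\}_{a=1}^{l}$, which is what makes the two sides literally the same operator.
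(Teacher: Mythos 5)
Your proposal is correct and is essentially the paper's own argument: the paper first transports the characteristic polynomial $\prod_{i,a}(X_i^a-\bar u)$ through $\eps$ using Theorem~\ref{thm:daha-commute}(b) and then substitutes, whereas you invoke the stated value of $\bRes_l(q^2)$ on $e\,p(Y_i^a)\,e$ directly, but since that value is itself derived from (b) the two computations coincide. In both cases the lemma reduces to exactly the bookkeeping you carried out, namely $\sum_{a=0}^{l-1}(1-l+2a)=0$ and the equality of the multisets $\{q^{2l-2a}\}_{a=0}^{l-1}=\{q^{2a}\}_{a=1}^{l}$.
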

\begin{proof}
Observe that 
\begin{multline*}
D_{nl, X}(u; q^{-2l}, q^2) = \sum_r (-1)^{nl - r} u^{nl - r} D_{nl, X}^r(q^{-2l}, q^2)\\
 = \eps_{nl}(q^{2l}, q^{-2}) \sum_r (-1)^{nl - r} \bar{u}^{nl - r} e_r(X_i^a) = \eps_{nl}(q^{2l}, q^{-2}) \prod_{i, a} (X_i^a - \bar{u}),
\end{multline*}
where $\bar{u} = \eps_{nl}(q^{2l}, q^{-2})(u)$.  Therefore, by Theorem \ref{thm:daha-commute}(b) with $q$ and $q^{-1}$ interchanged we find that 
\begin{align*}
\bRes_l(q^2) D_{nl, X}(u; q^{-2l}, q^2) &= \bRes_l(q^2) \eps_{nl}(q^{2l}, q^{-2})\prod_{i, a} (X_i^a - \bar{u})\\
&= \eps_n(q^2, q^{-2l}) \bRes_l(q^{-2})\prod_{i, a} (X_i^a - \bar{u})\\
&= \eps_n(q^2, q^{-2l}) \prod_{i, a} (q^{-2a + l - 1} X_i - \bar{u})\\
&= \eps_n(q^2, q^{-2l}) \prod_{a = 0}^{l - 1} \left(\sum_r (-1)^{n-r} (q^{2a - l + 1}\bar{u})^{n-r} e_r(X_i)\right)\\
&= \prod_{a = 0}^{l-1} \left(\sum_r (-1)^{n-r} (q^{l - 1 - 2a} u)^{n - r} D_{n, X}^r(q^{-2}, q^{2l}) \right)\\
&= \prod_{a = 0}^{l-1} D_{n, X}(u q^{l - 1 - 2a}; q^{-2}, q^{2l}).
\end{align*}
Setting $u = q^{l + 1}$ implies the desired
\[
\bRes_l(q^2)(D_{nl, X}(q^{l + 1}; q^{-2l}, q^2)) = \prod_{a = 1}^l D_{n, X}(q^{2a}; q^{-2}, q^{2l}). \qedhere
\]
\end{proof}

\section{Computing diagonal matrix elements in the Gelfand-Tsetlin basis}

\subsection{Factorization of matrix elements}

For a choice of $\mu^1, \ldots, \mu^n = \lambda$ so that $\wmu^i \prec \cdots \prec \wmu^n = \wlambda$ forms a Gelfand-Tsetlin pattern subordinate to $\wlambda$, denote the pattern by $\wbbmu$.  Let $c(\wbbmu, \lambda)$ denote the diagonal matrix coefficient of $v_{\wbbmu}$ in $\wPhi_\lambda$. For a signature $\mu \prec \lambda$, let the pattern $\gt(\mu)$ be defined by 
\begin{equation} \label{eq:gt-def}
\gt(\mu)^l_i = \mu_i \text{ for $l < n$}.
\end{equation}
Define $c(\mu, \lambda)$ to be the diagonal matrix coefficient $c(\gt(\wmu), \lambda)$ of $v_{\gt(\wmu)}$ in $\wPhi_\lambda$.

We show that $\wPhi_\lambda$ has non-zero diagonal matrix elements only on basis vectors indexed by patterns of the form $\wbbmu$ and that these elements admit a level-by-level factorization.

\begin{lemma} \label{lem:exclude}
If $v_{\bbmu}$ is not of the form $v_{\wbbmu}$, then $v_{\bbmu}$ has zero diagonal matrix element in $\wPhi_\lambda$.  
\end{lemma}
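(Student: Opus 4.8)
The plan is to induct on $n$, restricting $\wPhi_\lambda^n$ to the subalgebra $U_q(\gl_{n-1}) \subset U_q(\gl_n)$ and exploiting the fact that the Gelfand--Tsetlin basis is adapted to the chain $U_q(\gl_1) \subset \cdots \subset U_q(\gl_n)$; the base case $n=1$ is trivial. Write $\bbmu = \{\mu^1 \prec \cdots \prec \mu^{n-1} \prec \mu^n = \wlambda\}$. First I would record the relevant restrictions: as a $U_q(\gl_{n-1})$-module $L_{\wlambda}$ decomposes multiplicity-freely as $\bigoplus_\beta L^{(n-1)}_\beta$, with $v_{\bbmu}$ lying in the summand $L^{(n-1)}_{\mu^{n-1}}$ and coinciding there with the Gelfand--Tsetlin vector of $L^{(n-1)}_{\mu^{n-1}}$ indexed by $\{\mu^1 \prec \cdots \prec \mu^{n-1}\}$; and $W_{k-1} = \Sym^{(k-1)n}(\CC^n) \otimes (\det)^{-(k-1)}$ restricts to $\bigoplus_{j=0}^{(k-1)n} \Sym^{(k-1)n-j}(\CC^{n-1}) \otimes (\det)^{-(k-1)}$, the $j$-th summand carrying weight $j-(k-1)$ in the $n$-th coordinate, so that the zero-weight vector $w_{k-1}$ sits in the $j=k-1$ summand $W_{k-1}^{(n-1)} = \Sym^{(k-1)(n-1)}(\CC^{n-1}) \otimes (\det)^{-(k-1)}$, which is precisely the module appearing in the Etingof--Kirillov construction for $U_q(\gl_{n-1})$.

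Next I would compose $\wPhi_\lambda^n$ with the inclusion $L^{(n-1)}_{\mu^{n-1}} \hookrightarrow L_{\wlambda}$ and the projection $L_{\wlambda} \otimes W_{k-1} \to L^{(n-1)}_{\mu^{n-1}} \otimes W_{k-1}^{(n-1)}$ to obtain a $U_q(\gl_{n-1})$-intertwiner $\Psi \colon L^{(n-1)}_{\mu^{n-1}} \to L^{(n-1)}_{\mu^{n-1}} \otimes W_{k-1}^{(n-1)}$. Because $v_{\bbmu}$ lies in $L^{(n-1)}_{\mu^{n-1}}$ and $w_{k-1}$ lies in $W_{k-1}^{(n-1)}$, the diagonal matrix element of $\wPhi_\lambda^n$ on $v_{\bbmu}$ is unaffected by these projections and therefore equals the coefficient of $v_{\bbmu} \otimes w_{k-1}$ in $\Psi(v_{\bbmu})$. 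The key representation-theoretic input is then the quantum Pieri rule: $\mathrm{Hom}_{U_q(\gl_{n-1})}(L^{(n-1)}_\kappa, L^{(n-1)}_\kappa \otimes W_{k-1}^{(n-1)})$ is nonzero if and only if $L_{\kappa + (k-1)\one}$ occurs in $L^{(n-1)}_\kappa \otimes \Sym^{(k-1)(n-1)}(\CC^{n-1})$, i.e.\ if and only if $\kappa_i - \kappa_{i+1} \geq k-1$ for every $i$.

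From here the induction runs as follows. If $\mu^{n-1}_i - \mu^{n-1}_{i+1} < k-1$ for some $i$ --- equivalently, if $\mu^{n-1} - (k-1)\wrho$ is not a signature, so that $\bbmu$ already fails to be of the form $\wbbmu$ at its top level --- then the above $\mathrm{Hom}$ space vanishes, hence $\Psi = 0$ and the diagonal matrix element is zero. Otherwise $\mu^{n-1} - (k-1)\wrho$ is a signature, the $\mathrm{Hom}$ space is one-dimensional and spanned by the Etingof--Kirillov intertwiner $\wPhi^{n-1}_{\mu^{n-1} - (k-1)\wrho}$, so $\Psi$ is a scalar multiple of it; consequently the diagonal matrix element of $\wPhi_\lambda^n$ on $v_{\bbmu}$ equals that scalar times the diagonal matrix element of $\wPhi^{n-1}_{\mu^{n-1} - (k-1)\wrho}$ on the Gelfand--Tsetlin vector indexed by $\{\mu^1 \prec \cdots \prec \mu^{n-1}\}$. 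By the inductive hypothesis this last quantity vanishes unless every level $\mu^r$ with $r \le n-1$ satisfies $\mu^r_i - \mu^r_{i+1} \ge k-1$ for all $i$. Since $\mu^n = \wlambda$ automatically has this property (as $\lambda$ is dominant), combining the two cases shows that $v_{\bbmu}$ has vanishing diagonal matrix element unless $\bbmu$ is of the form $\wbbmu$, completing the induction.

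The part I expect to be the main obstacle is the careful bookkeeping of the first two paragraphs: checking that $\wPhi_\lambda^n$ followed by the indicated projections genuinely lands in $L^{(n-1)}_{\mu^{n-1}} \otimes W_{k-1}^{(n-1)}$, that the result is a $U_q(\gl_{n-1})$-intertwiner, and that passing to these projections does not alter the diagonal matrix element. This hinges on $w_{k-1}$ sitting in the correct summand of $W_{k-1}$ under restriction to $U_q(\gl_{n-1})$ and on the standard compatibility of the Gelfand--Tsetlin basis with the chain of subalgebras; once those are in hand, the Pieri-type vanishing is immediate.
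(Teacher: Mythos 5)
Your proof is correct and rests on the same two ingredients as the paper's: restriction along the Gelfand--Tsetlin chain of subalgebras and the Pieri-type vanishing of $\mathrm{Hom}_{U_q(\gl_r)}(L_\kappa, L_\kappa \otimes W)$, where $W$ is the weight-zero-in-the-remaining-coordinates part of $W_{k-1}$, whenever some $\kappa_i - \kappa_{i+1} < k-1$. The only difference is organizational: the paper restricts directly to $U_q(\gl_r)$ at a level $r$ where the condition fails, so it needs no induction and no uniqueness statement for the intermediate intertwiner, whereas your level-by-level induction additionally folds in the factorization that the paper proves separately as Proposition \ref{prop:gt-factorization}.
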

\begin{proof}
For some $r < n$, we cannot write $\bbmu^r = \wtau$ for any $\tau$.  Let $U \subset W_{k-1}$ be the $U_q(\gl_r)$-submodule consisting of vectors of weight $0$ for $q^{h_{r + 1}}, \ldots, q^{h_n}$ so that $U \simeq L_{(k - 1)(r - 1, -1, \ldots -1)}$ as a $U_q(\gl_r)$-module.  Let $\underline{\bbmu}^r$ denote the truncation of $\bbmu^r$ so that $\underline{\bbmu}^r_i = \bbmu^r_i$.  Consider the Gelfand-Tsetlin pattern $\bbxi$ given by 
\[
\bbxi = \{\gt(\underline{\bbmu}^r) \prec \bbmu^{r+1} \prec \cdots \prec \bbmu^{n-1} \prec \lambda\}.
\]
Let $L_{\bbmu^r} \subset L_{\wlambda}$ be the $U_q(\gl_r)$-submodule with highest weight $\bbmu^r$ generated by $v_{\bbxi}$.   By Proposition \ref{prop:gt-lower}, the diagonal matrix element of $v_{\bbmu}$ lies in $L_{\bbmu^r} \otimes U$, hence is a multiple of the matrix element of $v_{\bbmu}$ in the induced $U_q(\gl_r)$-intertwiner
\[
L_{\bbmu^r} \to L_{\wlambda} \to L_{\wlambda} \otimes W_{k-1} \to L_{\bbmu^r} \otimes U
\]
given by projection onto $L_{\bbmu^r} \otimes U$.  This intertwiner is zero because $\bbmu^r$ is not of the form $\bbmu^r = \wtau$ for some $\tau$, giving the claim.
\end{proof}

\begin{prop} \label{prop:gt-factorization}
For any Gelfand-Tsetlin pattern
\[
\wbbmu = \{\wmu^1 \prec \wmu^2 \prec \cdots \prec \wmu^n = \wlambda\}
\]
subordinate to $\wlambda$, we have the factorization
\[
c(\wbbmu, \lambda) = \prod_{i = 1}^{n - 1} c(\mu^i, \mu^{i+1}).
\]
\end{prop}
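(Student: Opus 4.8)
The plan is to prove the factorization by induction on $n$, at each stage peeling off the top-level coefficient $c(\mu^{n-1},\lambda)$ by restricting $\wPhi_\lambda^n$ to $U_q(\gl_{n-1})$; the base case $n=1$ is trivial. For the inductive step I would first record two pieces of branching data. On the module side, $L_{\wlambda}\vert_{U_q(\gl_{n-1})}=\bigoplus_{\nu\prec\wlambda}M_\nu$ with $M_\nu\simeq L_\nu$ is the multiplicity-free Gelfand--Tsetlin decomposition, and --- since the lowering operators $d_1,\dots,d_{n-1}$ of Proposition \ref{prop:gt-lower} all lie in $U_q(\gl_{n-1})$ --- the vector $v_{\wbbmu}$ lies in $M_{\wmu^{n-1}}$ and is carried by this isomorphism, up to a scalar irrelevant to diagonal matrix elements, to the Gelfand--Tsetlin vector $v_{\wbbmu'}$ of $L_{\wmu^{n-1}}$, where $\wbbmu'$ denotes the truncation of $\wbbmu$ to its bottom $n-1$ rows. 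On the coefficient side, the decomposition of $W_{k-1}$ by $q^{h_n}$-weight exhibits its $q^{h_n}$-invariants $U_{n-1}$ as a canonical $U_q(\gl_{n-1})$-summand; exactly as in the proof of Lemma \ref{lem:exclude}, $U_{n-1}\simeq W_{k-1}^{(n-1)}$, the weight module used in the Etingof--Kirillov~Jr.\ construction for $U_q(\gl_{n-1})$, and the distinguished zero-weight vector $w_{k-1}$ lies in and is common to both $W_{k-1}$ and $U_{n-1}$.

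Next I would form the induced intertwiner. Since $\wPhi_\lambda^n$ is $U_q(\gl_{n-1})$-linear, the composite of the inclusion $M_{\wmu^{n-1}}\hookrightarrow L_{\wlambda}$, the map $\wPhi_\lambda^n$, and the $U_q(\gl_{n-1})$-equivariant projection $L_{\wlambda}\otimes W_{k-1}\twoheadrightarrow M_{\wmu^{n-1}}\otimes U_{n-1}$ onto this single summand is a $U_q(\gl_{n-1})$-intertwiner which, under the identifications above, is a map $L_{\wmu^{n-1}}\to L_{\wmu^{n-1}}\otimes W_{k-1}^{(n-1)}$; by the uniqueness up to scalar underlying the Etingof--Kirillov~Jr.\ construction it equals $\alpha\cdot\wPhi_{\mu^{n-1}}^{n-1}$ for a unique scalar $\alpha$. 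For any $v\in M_{\wmu^{n-1}}$ the coefficient of $v\otimes w_{k-1}$ in $\wPhi_\lambda^n(v)$ is unaffected by the projection, since $v\otimes w_{k-1}\in M_{\wmu^{n-1}}\otimes U_{n-1}$, so it equals $\alpha$ times the analogous coefficient for $\wPhi_{\mu^{n-1}}^{n-1}$; taking $v=v_{\wbbmu}$ gives $c(\wbbmu,\lambda)=\alpha\cdot c(\wbbmu',\mu^{n-1})$. Taking instead $\wbbmu=\gt(\wmu^{n-1})$, whose bottom $n-1$ rows form the straightened pattern at rank $n-1$, the left side is $c(\mu^{n-1},\lambda)$ by definition while the right side is $\alpha$ times the diagonal matrix element of $\wPhi_{\mu^{n-1}}^{n-1}$ at the straightened Gelfand--Tsetlin vector, which is $1$ by the normalization in Section 1.3; hence $\alpha=c(\mu^{n-1},\lambda)$. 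The inductive hypothesis $c(\wbbmu',\mu^{n-1})=\prod_{i=1}^{n-2}c(\mu^i,\mu^{i+1})$ then completes the induction.

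I expect the main obstacle to be the bookkeeping around the identifications rather than a new idea: one has to verify that the Gelfand--Tsetlin isomorphism $M_{\wmu^{n-1}}\simeq L_{\wmu^{n-1}}$, the isomorphism $U_{n-1}\simeq W_{k-1}^{(n-1)}$, the choice of $w_{k-1}$ across ranks, and the top-/bottom-justified pattern conventions are all mutually compatible, so that $\alpha$ really is $c(\mu^{n-1},\lambda)$ and the diagonal matrix element of $\wPhi_{\mu^{n-1}}^{n-1}$ at its straightened vector is exactly $1$; and one must confirm that the relevant space of $U_q(\gl_{n-1})$-intertwiners $L_{\wmu^{n-1}}\to L_{\wmu^{n-1}}\otimes W_{k-1}^{(n-1)}$ is one-dimensional so that the Schur-type rigidity step is legitimate. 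With these in place the argument is simply ``restrict, project, rescale,'' the straightened-vector evaluation providing the telescoping at each stage.
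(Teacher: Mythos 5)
Your proposal is correct and follows essentially the same route as the paper: restrict to the $U_q(\gl_{n-1})$-submodule generated by $v_{\gt(\wmu^{n-1})}$, project $W_{k-1}$ onto its $q^{h_n}$-weight-zero part, identify the resulting intertwiner as a scalar multiple of $\wPhi_{\mu^{n-1}}^{n-1}$ by the uniqueness of the normalized intertwiner, and read off the scalar $c(\mu^{n-1},\lambda)$ from the highest weight vector. The paper extracts the scalar directly from the leading term of $\phi(v_{\gt(\wmu)})$ rather than by evaluating at the straightened pattern, but this is the same normalization argument.
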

\begin{proof}
By induction on $n$, it suffices to check that 
\[
c(\wbbmu, \lambda) = c(\mu, \lambda) c\Big(\{\wmu^1 \prec \cdots \prec \wmu^{n-1}\}, \mu^{n-1}\Big).
\]
Let $\mu = \mu^{n-1}$.  By Proposition \ref{prop:gt-lower}, the basis vector $v_{\wbbmu}$ lies in the $U_q(\gl_{n-1})$ submodule $L_{\wmu} \subset L_{\wlambda}$ with highest weight vector $v_{\gt(\wmu)}$.  Let $U \subset W_{k-1}$ be the $U_q(\gl_{n-1})$-submodule consisting of elements of weight $0$ under $q^{h_n}$.  Consider the $U_q(\gl_{n-1})$-intertwiner
\[
\phi: L_{\wmu} \to L_{\wmu} \otimes U
\]
given by composing $\wPhi_\lambda$ with the projection onto $L_{\wmu} \otimes U$.  The matrix element $c(\wbbmu, \lambda)$ lies in $U$, hence is the matrix element of $v_{\wbbmu}$ in $\phi$.  Notice that $\phi$ maps the $U_q(\gl_{n-1})$-highest weight vector $v_{\gt(\wmu)}$ to 
\[
c(\mu, \lambda) v_{\gt(\wmu)} \otimes w_{k-1} + (\text{l.o.t.})
\]
so that $\phi = c(\mu, \lambda) \wPhi_\mu$ and the matrix element of $v_{\wbbmu}$ is the desired
\[
c(\mu, \lambda)c\Big(\{\wmu^1 \prec \cdots \prec \wmu^{n-1}\}, \mu^{n-1}\Big). \qedhere
\]
\end{proof}

\subsection{Matrix elements as applications of Macdonald difference operators}

Our main technical result expresses matrix elements of $U_q(\gl_n)$-intertwiners as the application of Macdonald difference operators to an explicit kernel.  Define the elements $\Delta^{k-1}_1(\mu)$ and $\Delta^{k-1}_2(\mu)$ by
\begin{equation} \label{eq:delta}
\Delta^{k-1}_1(\mu) = \prod_{i < j} [\bmu_i - \bmu_j + (k - 1)]_{k-1} \qquad \Delta^{k-1}_2(\mu) = \prod_{i < j}[\bmu_i - \bmu_j - 1]_{k - 1}
\end{equation}
and the element $\Delta^{k-1}(\mu, \lambda)$ by
\begin{equation} \label{eq:delta-cross}
\Delta^{k-1}(\mu, \lambda) = \prod_{i \leq j} [\lambda_i - \mu_j + k (j - i) + k-1]_{k-1} \prod_{i < j}[\mu_i - \lambda_j + k (j - i) - 1]_{k-1}.
\end{equation}
We use Theorem \ref{thm:cg} to compute the diagonal matrix elements of $\wPhi_\lambda$ in terms of these elements, resulting in the following expression after manipulation.  We defer the proof of Proposition \ref{prop:diag-coeff} to Section \ref{sec:as}.

\begin{prop} \label{prop:diag-coeff}
Let $\mu' = \mu + (k-1)\one$, and $\nu' = \nu  + (k-1)\one$.  Then $c(\mu, \lambda)$ is given by
\begin{align*}
c(\mu, \lambda) &=  \frac{(-1)^{(n - 1)(k-1)} q^{(n - 1)k(k - 1)}}{\Delta_2^{k-1}(\lambda) \Delta_1^{k-1}(\mu)} \sum_{\bnu' = \bmu' - (k - 1)\one}^{\bmu'} (-1)^{|\bnu'| - |\bmu'|} q^{k(|\bnu'| - |\bmu'|)} \prod_i \frac{1}{[\bnu_i' - \bmu_i' + (k - 1)]![\bmu_i' - \bnu_i']!}\\
&\phantom{===} \frac{\prod_{i < j}  [\bmu_i' - \bmu_j' + k-1]_{2k-1}  \prod_{i < j} [\bnu_i' - \bnu_j']}{\prod_{i < j}[\bnu_i' - \bmu_j' + (k - 1)]_k [\bmu_i' - \bnu_j']_k} \prod_{i \leq  j} [\blambda_i - \bnu_j' + (k - 1)]_{k-1} \prod_{i < j} [\bnu_i' - \blambda_j - 1]_{k-1}.
\end{align*}
\end{prop}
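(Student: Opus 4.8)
The plan is to deduce Proposition~\ref{prop:diag-coeff} from the explicit Gelfand--Tsetlin matrix-element formula of \cite{AS}, recorded below as Theorem~\ref{thm:cg}, followed by a long but routine chain of $q$-factorial simplifications. The first observation is that $c(\mu,\lambda)$ is, by definition, the coefficient of $v_{\gt(\wmu)}\otimes w_{k-1}$ in $\wPhi_\lambda(v_{\gt(\wmu)})$. Since $\gt(\wmu)$ is flat below level $n$, the vector $v_{\gt(\wmu)}$ is the $U_q(\gl_{n-1})$-highest weight vector of the submodule $L_{\wmu}\subset L_{\wlambda}$, exactly as in the proof of Proposition~\ref{prop:gt-factorization}. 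Composing $\wPhi_\lambda$ with the projection onto $L_{\wmu}\otimes U$ (with $U\subset W_{k-1}$ the $U_q(\gl_{n-1})$-submodule of $q^{h_n}$-weight-zero vectors) realizes $c(\mu,\lambda)$ as the matrix element of a single-level branching of $\wPhi_\lambda$, normalized by the requirement that $\wPhi_\lambda$ send $v_{\wlambda}$ to $v_{\wlambda}\otimes w_{k-1}+(\text{l.o.t.})$; equivalently, writing $v_{\gt(\wmu)} = d\cdot v_{\wlambda}$ for the product of lowering operators of Proposition~\ref{prop:gt-lower}, it records how the coproduct $\Delta(d)$ applied to $\wPhi_\lambda(v_{\wlambda})$ distributes lowering between the two tensor factors.

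Next I would apply Theorem~\ref{thm:cg}. The formula of \cite{AS} evaluates this matrix element as an explicit finite summation over an intermediate signature $\nu$; for the diagonal element on $v_{\gt(\wmu)}$ the range, after the shift $\nu' = \nu + (k-1)\one$ of the Proposition and passage to the bar variables, is exactly $\bnu' = \bmu' - (k-1)\one,\dots,\bmu'$ --- the width $k-1$ reflecting that $W_{k-1} = \Sym^{(k-1)n}(\CC^n)\otimes(\det)^{-(k-1)}$, so that the part of $\Delta(d)$ entering the $W_{k-1}$-factor can return a lower-order vector of $\wPhi_\lambda(v_{\wlambda})$ to $w_{k-1}$ only within a box of side $k-1$ in each coordinate. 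Evaluating the individual contributions by \cite{AS} and tracking the normalization of $\wPhi_\lambda$ produces, for each $\nu$, a summand which is a ratio of products of $q$-numbers and falling $q$-factorials in the entries of $\mu$, $\nu$, $\lambda$ and in $k$ and $n$.

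It then remains to recognize the resulting double sum as the closed form in the statement, which is carried out in Section~\ref{sec:as} with the heaviest identities postponed to Section~6. This step is pure bookkeeping with $q$-factorials: one merges products of single $q$-numbers into falling factorials via $[a]_m[a-m]_{m'} = [a]_{m+m'}$, uses the reflection $[-a] = -[a]$ both to fix signs termwise and to extract the global scalar $(-1)^{(n-1)(k-1)}q^{(n-1)k(k-1)}$, applies the substitutions $\bmu_i = \mu_i - k(i-1)$, $\blambda_i = \lambda_i - k(i-1)$, $\mu' = \mu+(k-1)\one$ to bring every argument into the displayed form, and pulls the factors depending only on $\mu$ and only on $\lambda$ outside the summation, where they assemble into $\Delta_1^{k-1}(\mu)^{-1}$ and $\Delta_2^{k-1}(\lambda)^{-1}$. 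I expect the main obstacle to be precisely this reconciliation: the formula of \cite{AS} comes in a different normalization and with a different grouping and indexing of its factors, so matching the exact $i\le j$ versus $i<j$ splitting of the products over pairs and pinning down the overall constant requires a delicate and lengthy computation rather than any new conceptual input --- which is why it is deferred to Sections~\ref{sec:as} and~6.
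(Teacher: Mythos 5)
Your proposal follows the paper's own route: it reduces $c(\mu,\lambda)$ to a single top-level reduced Clebsch--Gordan coefficient for $L_{\wlambda}\to L_{\wlambda-(k-1)\one}\otimes\Sym^{(k-1)n}\CC^n$ (using $W_{k-1}\simeq\Sym^{(k-1)n}\CC^n\otimes(\det)^{-(k-1)}$), evaluates it via Theorem \ref{thm:cg}, corrects for the mismatch with the Etingof--Kirillov normalization, and finishes with $q$-factorial bookkeeping, which is exactly the argument of Section \ref{sec:as}. The one step you leave implicit---that the normalization is fixed by dividing by the diagonal coefficient at the highest-weight pattern, which the paper computes by collapsing the \cite{AS} sum to a single term at $\mu=\underline{\lambda}$ and iterating over levels---is a detail subsumed in your ``tracking the normalization'' step rather than a different method.
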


In this form, we can now identify the matrix element with an application of Macdonald difference operators.

\begin{theorem} \label{thm:mat-elt}
Let $\mu' = \mu + (k - 1)\one$.  The matrix element $c(\mu, \lambda)$ is given by 
\[
c(\mu, \lambda) = \frac{\prod_{a = 1}^{k-1} D_{n-1, q^{2\bmu}}(q^{2a}; q^{-2}, q^{2(k-1)}) \Delta^{k-1}(\mu', \lambda)}{\Delta^{k-1}_1(\mu) \Delta^{k-1}_2(\lambda)},
\]
where $D_{n-1, q^{2\bmu}}(q^{2a}; q^{-2}, q^{2(k-1)})$ was defined in (\ref{eq:spec-op}).
\end{theorem}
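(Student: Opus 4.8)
The plan is to obtain Theorem \ref{thm:mat-elt} from the explicit summation of Proposition \ref{prop:diag-coeff} by recognizing the latter as the result of applying the product of Macdonald difference operators $\prod_{a=1}^{k-1} D_{n-1, q^{2\bmu}}(q^{2a}; q^{-2}, q^{2(k-1)})$ to the kernel $\Delta^{k-1}(\mu',\lambda)$, with Lemma \ref{lem:res-diff} supplying the bridge that lets a single Macdonald operator in $(n-1)(k-1)$ variables stand in for this product.

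First I would reduce the statement to an operator identity. Writing $\Delta^{k-1}(\mu',\lambda) = \prod_{i \le j}[\blambda_i - \bmu'_j + k-1]_{k-1}\prod_{i<j}[\bmu'_i - \blambda_j - 1]_{k-1}$ and observing that under the substitution $\bmu' \mapsto \bnu'$ this becomes exactly the $\lambda$-dependent factor $\prod_{i\le j}[\blambda_i - \bnu'_j + k-1]_{k-1}\prod_{i<j}[\bnu'_i - \blambda_j - 1]_{k-1}$ of the summand of Proposition \ref{prop:diag-coeff}, and noting that the two statements carry the same denominator $\Delta^{k-1}_1(\mu)\Delta^{k-1}_2(\lambda)$, it suffices to prove that $\prod_{a=1}^{k-1}D_{n-1,q^{2\bmu}}(q^{2a};q^{-2},q^{2(k-1)})$, applied to $\Delta^{k-1}(\mu',\lambda)$ as a $q$-difference operator in $\bmu$, equals $(-1)^{(n-1)(k-1)}q^{(n-1)k(k-1)}$ times the sum over $\bnu' \in [\bmu' - (k-1)\one, \bmu']$ of the remaining factors of that summand.

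To evaluate the operator I would invoke Lemma \ref{lem:res-diff} with $n \mapsto n-1$ and $l \mapsto k-1$, which identifies $\prod_{a=1}^{k-1}D_{n-1,X}(q^{2a};q^{-2},q^{2(k-1)})$ with the image under $\bRes_{k-1}(q^2)$ of the single operator $D_{(n-1)(k-1),X}(q^k; q^{-2(k-1)}, q^2)$, and I would transport this identity through the substitution $X_i^b \mapsto q^{2b - (k-2)}X_i = q^{2b-(k-2)}q^{2\bmu'_i}$ using Theorem \ref{thm:daha-commute}(a) and Corollary \ref{corr:daha-comm-shift}. Expanding $D_{(n-1)(k-1),X}(q^k; q^{-2(k-1)}, q^2) = \sum_{J}(-q^k)^{N-|J|}q^{2|J|(|J|-N)}\prod_{\alpha \in J,\, \beta \notin J}\frac{q^2 X_\alpha - X_\beta}{X_\alpha - X_\beta}\, T_{q^{-2(k-1)},J}$ over subsets $J$ of the $N = (n-1)(k-1)$ variables and performing this substitution, one has $q^2 X_i^a = X_i^{a+1}$ for consecutive variables in a block, which forces the coefficient to vanish unless $J$ meets the $i$-th block of $k-1$ variables in an upper segment $\{b_i,\dots,k-2\}$; the surviving terms are thus indexed by $(b_1,\dots,b_{n-1}) \in \{0,\dots,k-1\}^{n-1}$, each carrying the shift $\bmu'_i \mapsto \bnu'_i = \bmu'_i - (k-1-b_i)$, so they biject with the index set of Proposition \ref{prop:diag-coeff}. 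Computing the restricted coefficient of such a term then reduces to telescoping products of $q$-numbers: the intra-block factors $\prod_{0 \le b < b_i \le a \le k-2}\frac{q^2 X_i^a - X_i^b}{X_i^a - X_i^b}$ collapse to powers of $q$ and the $q$-factorials $[b_i]!$, $[k-1-b_i]!$ (up to an overall $[k-1]!$ per block which recombines with the denominator), while the inter-block factors telescope into products of ratios $\frac{[\bmu'_i - \bmu'_j + k-1]!\,[\bnu'_i-\bnu'_j]!}{[\bnu'_i - \bmu'_j + k-1]!\,[\bmu'_i - \bnu'_j]!}$ which, after combining the $(i,j)$ and $(j,i)$ contributions and rewriting ratios of $q$-factorials, assemble into $\frac{\prod_{i<j}[\bmu'_i-\bmu'_j+k-1]_{2k-1}\prod_{i<j}[\bnu'_i-\bnu'_j]}{\prod_{i<j}[\bnu'_i-\bmu'_j+k-1]_k[\bmu'_i-\bnu'_j]_k}$; collecting all the $q$-powers yields the prefactor $(-1)^{(n-1)(k-1)}q^{(n-1)k(k-1)}(-q^k)^{|\bnu'|-|\bmu'|}$. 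Multiplying by $\Delta^{k-1}(\mu',\lambda)\big|_{\bmu'\mapsto\bnu'}$ and summing over $\bnu'$ reproduces the summand of Proposition \ref{prop:diag-coeff} term by term.

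The crux is this last computation. The combinatorial part — deciding which subsets $J$ survive the substitution and matching them with the box $[\bmu'-(k-1)\one,\bmu']$ — is routine once set up, but the $q$-number bookkeeping is delicate: one must carry out the intra- and inter-block telescopes, reconcile the extra $[k-1]!$ factors against the denominator, and keep track of the $q$-powers, signs, and the half-integer shifts implicit in the factors $[\,\cdot\,]_{k-1}$ of $\Delta^{k-1}(\mu',\lambda)$; in particular, the appearance of the falling factorials $[\,\cdot\,]_{2k-1}$ and $[\,\cdot\,]_k$ in the cross ratio is exactly what emerges from symmetrizing the two orientations of each pair $(i,j)$. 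I expect this bookkeeping to be the main obstacle.
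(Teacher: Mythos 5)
Your proposal follows essentially the same route as the paper: it invokes Lemma \ref{lem:res-diff} with $l = k-1$ together with Corollary \ref{corr:daha-comm-shift} to trade the product $\prod_{a=1}^{k-1} D_{n-1,q^{2\bmu}}(q^{2a};q^{-2},q^{2(k-1)})$ for the single operator $D_{(n-1)(k-1),X}(q^{k};q^{-2(k-1)},q^2)$ before restriction, identifies the surviving subsets as upper segments of each block (the paper's $I=\{(i,a):a\ge s_i\}$), telescopes the $q$-number ratios into the same cross-ratio and $\frac{[l]!}{[s_i]![l-s_i]!}$ factors, and matches the result term by term with Proposition \ref{prop:diag-coeff}. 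The bookkeeping you defer is exactly the computation carried out in the paper's proof, so the approach is correct and not materially different.
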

\begin{proof}
For an expression $E$, let $1_E = 1$ if $E$ holds and $1_E = 0$ otherwise.  Interpreting $\Res_l(q^2)$ in the sense of Subsection \ref{sec:res-extend}, notice that
\begin{align*}
\Res_l&(q^2) D_{(n-1)l, q^{2\bmu}}(q^{l + 1}; q^{-2l}, q^2) \prod_{a = 0}^{l - 1}\prod_{i \leq j} [\blambda_i - \bmu_j^a + k/2] \prod_{i < j} [\bmu_i^a - \blambda_j - k/2]\\
&= \Res_l(q^2)\left(\sum_I (-1)^{(n - 1)l - |I|} q^{k((n - 1)l - |I|)}\prod_{(i, a) \in I; (j, b) \notin I} \frac{[\bmu_i^a - \bmu_j^b + 1]}{[\bmu_i^a - \bmu_j^b]}\right.\\
&\phantom{=================}  \left.\prod_{i \leq j, a} [\blambda_i - \bmu_j^a + l 1_{(j, a) \in I} + k/2] \prod_{i < j, a} [\bmu_i^a - \blambda_j - l 1_{(i, a) \in I} - k/2]\right)\\
&= \sum_I (-1)^{(n - 1)l - |I|} q^{k(l(n-1) - |I|)}\!\!\!\!\!\!\!\!\!\!  \prod_{(i, a) \in I; (j, b) \notin I}\!\!\!\!\!\! \frac{[\bmu_i - \bmu_j + a - b + 1]}{[\bmu_i - \bmu_j + a - b]}\\
&\phantom{================}  \prod_{i \leq j, a} [\blambda_i - \bmu_j - a + l 1_{(j, a) \in I} + l] \prod_{i < j, a} [\bmu_i + a - \blambda_j - l 1_{(i, a) \in I} - l],
\end{align*}
where both sums are over subsets $I \subset \{(i, a) \mid 1 \leq i \leq n - 1, 0 \leq a \leq l - 1\}$.  If $(i, a) \in I$ and $(i, a + 1) \notin I$, then the corresponding term is zero, so the only subsets $I$ which contribute to the sum are those of the form 
\[
I = \{(i, a) \mid a \geq s_i\} \text{ for some $s_1, \ldots, s_{n-1}$}.
\]
We rewrite the sum in these terms as
\begin{align*}
\Res_l(q^2)& D_{(n-1)l, q^{2\bmu}}(q^{l+1}; q^{-2l}, q^2)\prod_{a = 0}^{l - 1}\prod_{i \leq j} [\blambda_i - \bmu_j^a + k/2] \prod_{i < j} [\bmu_i^a - \blambda_j - k/2]\\
&= \!\!\!\!\sum_{s_1, \ldots, s_{n-1} = 0}^{l} \!\!\!\!(-1)^{- \sum_i s_i} q^{k \sum_i s_i} \prod_{i \leq j} [\blambda_i - \bmu_j + 2 l - s_i]_l \prod_{i < j} [\bmu_i - \blambda_j - l + s_i - 1]_l\\
&\phantom{============================} \prod_{i, j} \prod_{a = s_i}^{l - 1} \prod_{b = 0}^{s_j - 1} \frac{[\bmu_i - \bmu_j + a - b + 1]}{[\bmu_i - \bmu_j + a - b]}.
\end{align*}
Observe now that 
\begin{multline*}
\prod_{i, j} \prod_{a = s_i}^{l - 1} \prod_{b = 0}^{s_j - 1} \frac{[\bmu_i - \bmu_j + a - b + 1]}{[\bmu_i - \bmu_j + a - b]} =
\prod_{i, j}  \prod_{b = 0}^{s_j - 1} \frac{[\bmu_i - \bmu_j + l - b]}{[\bmu_i - \bmu_j + s_i - b]}\\
=  \prod_{i, j}  \frac{[\bmu_i - \bmu_j + l]_{s_j}}{[\bmu_i - \bmu_j + s_i]_{s_j}}
= \prod_{i < j} \frac{[\bmu_i - \bmu_j + l]_{2l + 1} [\bmu_i - \bmu_j + s_i - s_j]}{[\bmu_i - \bmu_j - s_j + l]_{l + 1} [\bmu_i - \bmu_j + s_i]_{l + 1}}\prod_i \frac{[l]!}{[s_i]![l - s_i]!}.
\end{multline*}
Substituting this into the previous expression and changing variables to $r_i = l - s_i$, we obtain
\begin{align*}
\Res_l&(q^2) D_{(n-1)l, q^{2\bmu}}(q^{l+1}; q^{-2l}, q^2) \prod_{a = 0}^{l - 1}\prod_{i \leq j} [\blambda_i - \bmu_j^a + k/2] \prod_{i < j} [\bmu_i^a - \blambda_j - k/2]\\
&= q^{kl (n - 1)}(-1)^{l(n-1)}\sum_{r_1, \ldots, r_{n-1} = 0}^{l} (-1)^{\sum_i r_i} q^{-k \sum_i r_i}\\
&\phantom{===}\prod_{i \leq j} [\blambda_i - \bmu_j + r_j + l]_l \prod_{i < j} [\bmu_i - \blambda_j - r_i - 1]_l \prod_{i < j} \frac{[\bmu_i - \bmu_j + l]_{2l + 1} [\bmu_i - \bmu_j - r_i + r_j]}{[\bmu_i - \bmu_j + r_j]_{l + 1} [\bmu_i - \bmu_j - r_i + l]_{l + 1}} \prod_i \frac{[l]!}{[r_i]![l - r_i]!},
\end{align*}
On the other hand, we have that
\begin{align*}
\Res_l(q^{2}) &\left(\prod_{a = 0}^{l - 1}\prod_{i \leq j} [\blambda_i - \bmu_j^a + k/2] \prod_{i < j} [\bmu_i^a - \blambda_j - k/2]\right)= \prod_{i \leq j} [\blambda_i - \bmu_j + k - 1]_{k - 1} \prod_{i < j} [\bmu_i - \blambda_j - 1]_{k-1}.
\end{align*}
Therefore, by Lemma \ref{lem:res-diff} and Corollary \ref{corr:daha-comm-shift} with $l = k - 1$ and $X_i^a = q^{2\bmu_i^a}$, we conclude that 
\begin{align*}
\prod_{a = 1}^l &D_{n - 1, q^{2\bmu}}(q^{2a}; q^{-2}, q^{2l}) \prod_{i \leq j} [\blambda_i - \bmu_j + k - 1]_{k - 1} \prod_{i < j} [\bmu_i - \blambda_j - 1]_{k-1}\\
&= q^{kl (n - 1)}(-1)^{l(n-1)}\sum_{r_1, \ldots, r_{n-1} = 0}^{l} (-1)^{\sum_i r_i} q^{-k \sum_i r_i}\\
&\phantom{===}\prod_{i \leq j} [\blambda_i - \bmu_j + r_j + l]_l \prod_{i < j} [\bmu_i - \blambda_j - r_i - 1]_l \prod_{i < j} \frac{[\bmu_i - \bmu_j + l]_{2l + 1} [\bmu_i - \bmu_j - r_i + r_j]}{[\bmu_i - \bmu_j + r_j]_{l + 1} [\bmu_i - \bmu_j - r_i + l]_{l + 1}} \prod_i \frac{[l]!}{[r_i]![l - r_i]!}.
\end{align*}
Dividing both sides by $\prod_{i < j} [\blambda_i - \blambda_j  - 1]_l \prod_{i \leq j} [\bmu_i - \bmu_j + l]_l$, we obtain
\begin{align*}
&\frac{\prod_{a = 1}^l D_{n - 1, q^{2\bmu}}(q^{2a}; q^{-2}, q^{2l}) \prod_{i \leq j} [\blambda_i - \bmu_j + k - 1]_{k - 1} \prod_{i < j} [\bmu_i - \blambda_j - 1]_{k-1}}{\prod_{i < j} [\blambda_i - \blambda_j  - 1]_l \prod_{i \leq j} [\bmu_i - \bmu_j + l]_l}\\
&\phantom{==========}=  \frac{q^{kl (n - 1)}(-1)^{l(n-1)}}{\prod_{i < j} [\blambda_i - \blambda_j  - 1]_l \prod_{i < j} [\bmu_i - \bmu_j + l]_l}\sum_{r_1, \ldots, r_{n-1} = 0}^{l} (-1)^{\sum_i r_i} q^{-k \sum_i r_i}\prod_i \frac{1}{[r_i]![l - r_i]!}\\
&\phantom{=============}\prod_{i \leq j} [\blambda_i - \bmu_j + r_j + l]_l \prod_{i < j} [\bmu_i - \blambda_j - r_i - 1]_l \prod_{i < j} \frac{[\bmu_i - \bmu_j + l]_{2l + 1} [\bmu_i - \bmu_j - r_i + r_j]}{[\bmu_i - \bmu_j + r_j]_{l + 1} [\bmu_i - \bmu_j - r_i + l]_{l + 1}},
\end{align*}
where the second expression is equal to $c(\mu - (k - 1)\one, \lambda)$ by Proposition \ref{prop:diag-coeff}.  Replacing $\mu$ by its shift $\mu' = \mu + (k - 1) \one$ and recalling the definitions of $\Delta^{k-1}_1(\mu)$, $\Delta^{k-1}_2(\lambda)$, and $\Delta^{k-1}(\mu, \lambda)$ yields the claimed expression
\[
c(\mu, \lambda) = \frac{\prod_{a = 1}^{k-1} D_{n-1, q^{2\bmu}}(q^{2a}; q^{-2}, q^{2(k-1)}) \prod_{i \leq j} [\blambda_i - \bmu_j' + k-1]_{k-1} \prod_{i < j}[\bmu_i' - \blambda_j - 1]_{k-1}}{\prod_{i \leq j} [\bmu_i' - \bmu_j' + k-1]_{k-1} \prod_{i < j}[\blambda_i - \blambda_j - 1]_{k-1}}. \qedhere
\]
\end{proof}

\section{Proving Macdonald's branching rule} \label{sec:branch}

We now put everything together to give a new proof of Macdonald's branching rule, which we reformulate for $t = q^k$ with $k$ a positive integer.    

\begin{theorem} \label{thm:branch-spec}
At $t = q^k$ for positive integer $k$, we have 
\[
P_\lambda(x_1, \ldots, x_n; q^2, q^{2k}) = \sum_{\mu \prec \lambda} x_n^{|\lambda| - |\mu|} P_\mu(x_1, \ldots, x_{n-1}; q^2, q^{2k}) \psi_{\lambda/\mu}(q^2, q^{2k})
\]
with 
\[
\psi_{\lambda/\mu}(q^2, q^{2k}) = \frac{\Delta^{k-1}(\mu, \lambda)}{\Delta^{k-1}_1(\mu) \Delta^{k-1}_2(\lambda)}.
\]
\end{theorem}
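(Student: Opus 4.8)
The plan is to follow the route sketched in the introduction: expand the Etingof--Kirillov~Jr. trace in the Gelfand--Tsetlin basis, substitute Theorem~\ref{thm:mat-elt} for the diagonal matrix elements, and use a summation by parts to collapse the sum over Gelfand--Tsetlin patterns subordinate to $\wlambda$ onto the branching sum over patterns subordinate to $\lambda$. First I would expand $\Tr(\wPhi_\lambda^n x^h)$ in the Gelfand--Tsetlin basis of $L_{\wlambda}$; by Lemma~\ref{lem:exclude} only the vectors $v_{\wbbmu}$ contribute, and by Proposition~\ref{prop:gt-factorization} the diagonal matrix element of $v_{\wbbmu}$ factors as $\prod_i c(\mu^i,\mu^{i+1})$. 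Grouping the sum by the top interlacing step $\wmu := \wmu^{n-1}\prec\wlambda$, the inner sum over the remaining pattern is the lower-rank trace $\Tr(\wPhi_\mu^{n-1}x^h)$ in the remaining $n-1$ variables --- which by the symmetry of $P_\lambda$ we may call $x_1,\ldots,x_{n-1}$ --- and this equals $P_\mu(x_1,\ldots,x_{n-1};q^2,q^{2k})\,\Tr(\wPhi_0^{n-1}x^h)$ by Corollary~\ref{corr:ek-shift}. (Here $\wmu$ is automatically a dominant weight even when $\mu$ itself is not, so both sides still make sense; the Macdonald polynomial with non-dominant index is to be understood via this trace.) Dividing by $\Tr(\wPhi_0^n x^h)$, inserting the explicit denominators of Corollary~\ref{corr:denom}, and using the homogeneity of $P_\mu$ to absorb the powers of the distinguished variable (write $y_i = x_i/x_n$), the theorem reduces to
\[
\sum_{\wmu\prec\wlambda} c(\mu,\lambda)\, P_{\mu+(k-1)\one}(y_1,\ldots,y_{n-1};q^2,q^{2k}) \;=\; \prod_{s=1}^{k-1}\prod_{i=1}^{n-1}(y_i-q^{2s}) \sum_{\mu\prec\lambda} P_\mu(y_1,\ldots,y_{n-1};q^2,q^{2k})\, \psi_{\lambda/\mu}(q^2,q^{2k}),
\]
with $\psi_{\lambda/\mu}(q^2,q^{2k}) = \Delta^{k-1}(\mu,\lambda)/\big(\Delta_1^{k-1}(\mu)\Delta_2^{k-1}(\lambda)\big)$, which I would then establish directly.

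The next step rewrites $c(\mu,\lambda)$. From Theorem~\ref{thm:mat-elt} together with the formula $\wD_{n-1,q^{2\bmu}}^r(q^2,q^{2k})^\dagger = \Delta_1^{k-1}(\mu)^{-1}\circ D_{n-1,q^{2\bmu}}^r(q^{-2},q^{2(k-1)})\circ\Delta_1^{k-1}(\mu)$ of Proposition~\ref{prop:mac-adj} --- recall $\Delta_1^{k-1}(\mu) = \prod_{i<j}[\bmu_i-\bmu_j+k-1]_{k-1}$ --- I conjugate the operator $\prod_{a=1}^{k-1}D_{n-1,q^{2\bmu}}(q^{2a};q^{-2},q^{2(k-1)})$ appearing in $c(\mu,\lambda)$ by $\Delta_1^{k-1}(\mu)$. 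The conjugating factors telescope (the operators $D_{n-1}^r(q^{-2},q^{2(k-1)})$ commute), and the resulting outer $\Delta_1^{k-1}(\mu)$ cancels against the one in the denominator of $c(\mu,\lambda)$, giving
\[
c(\mu,\lambda) = \Big[\prod_{a=1}^{k-1}\Big(\sum_r (-1)^{n-1-r}q^{2a(n-1-r)}\,\wD_{n-1,q^{2\bmu}}^r(q^2,q^{2k})^\dagger\Big)\, G_\lambda\Big](\mu),\qquad G_\lambda(\mu):=\frac{\Delta^{k-1}(\mu+(k-1)\one,\lambda)}{\Delta_1^{k-1}(\mu)\,\Delta_2^{k-1}(\lambda)},
\]
that is, a product of adjoint Macdonald operators applied to an explicit kernel.

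The core of the argument is a summation by parts. I would apply Proposition~\ref{prop:mac-adj} with $l=k-1$, $f=G_\lambda$ and $g(\mu)=P_{\mu+(k-1)\one}(y)$, taking $\zeta_i^- = \lambda_{i+1}-(k-1)$ and $\zeta_i^+ = \lambda_i-(k-1)$ so that the extended region $(\zeta^-,\zeta^++(k-1)\one)$ is $\{\mu_i\in[\lambda_{i+1}-(k-1),\lambda_i]\}$, which is precisely the range $\wmu\prec\wlambda$, while the core region $[\zeta^-,\zeta^+]$ is $\{\mu:\mu+(k-1)\one\prec\lambda\}$. The hypothesis to verify is that $G_\lambda$ is $(\zeta,k-1)$-adapted. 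The denominators cause no trouble: for $\wmu\prec\wlambda$ one has $\bmu_i-\bmu_j\ge j-i$ and $\blambda_i-\blambda_j\ge k(j-i)$, so the arguments of the falling $q$-factorials in $\Delta_1^{k-1}(\mu)$ and $\Delta_2^{k-1}(\lambda)$ avoid the window $\{0,\ldots,k-2\}$ on which $[m]_{k-1}$ vanishes. For the numerator, rewrite $\Delta^{k-1}(\mu+(k-1)\one,\lambda) = \prod_{i\le j}[\lambda_i-\mu_j+k(j-i)]_{k-1}\prod_{i<j}[\mu_i-\lambda_j+k(j-i)+k-2]_{k-1}$; the diagonal factor $[\lambda_i-\mu_i]_{k-1}$ vanishes for $\mu_i\in\{\lambda_i-k+2,\ldots,\lambda_i\}$, which is the upper border strip $\zeta_i^+<\mu_i\le\zeta_i^++(k-1)$, and the factor with $j=i+1$, namely $[\mu_i-\lambda_{i+1}+2k-2]_{k-1}$, vanishes for $\mu_i\in\{\lambda_{i+1}-2k+2,\ldots,\lambda_{i+1}-k\}$, which is the lower border strip $\zeta_i^--(k-1)\le\mu_i<\zeta_i^-$. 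Given adaptedness, Proposition~\ref{prop:mac-adj} moves the operators onto $g$; since the coefficients of $\wD_{n-1,q^{2\bmu}}^r(q^2,q^{2k})$ depend only on the differences $\bmu_i-\bmu_j$, which are unchanged by $\mu\mapsto\mu+(k-1)\one$, the operator acts on the index of $P_{\mu+(k-1)\one}(y)$ diagonally with eigenvalue $e_r(y)$ by Proposition~\ref{prop:sym-diag}, so the product collapses to the scalar $\prod_{a=1}^{k-1}\prod_{i=1}^{n-1}(y_i-q^{2a})$. The left-hand side therefore equals $\prod_{a,i}(y_i-q^{2a})\sum_{\mu\in[\zeta^-,\zeta^+]}G_\lambda(\mu)\,P_{\mu+(k-1)\one}(y)$; substituting $\nu=\mu+(k-1)\one$ turns the index set into $\{\nu\prec\lambda\}$, and since $\Delta_1^{k-1}$ depends only on differences of its argument one has $G_\lambda(\nu-(k-1)\one)=\Delta^{k-1}(\nu,\lambda)/\big(\Delta_1^{k-1}(\nu)\Delta_2^{k-1}(\lambda)\big)=\psi_{\lambda/\nu}(q^2,q^{2k})$. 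This is exactly the displayed identity, whence the theorem.

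The step I expect to be the main obstacle is the summation by parts: correctly matching the range $\wmu\prec\wlambda$ to the extended region of Proposition~\ref{prop:mac-adj}, identifying the core region with $\{\mu:\mu+(k-1)\one\prec\lambda\}$, and locating precisely the zeros of $\Delta^{k-1}(\mu+(k-1)\one,\lambda)$ to confirm $(\zeta,k-1)$-adaptedness while checking that the denominators do not cancel them. Secondary technical points are the first-paragraph reduction --- handling the powers of the distinguished variable and the ratio of the normalizations $\Tr(\wPhi_0^\bullet x^h)$ by homogeneity, and extending the definition of $P_\mu$ and the symmetry statement of Proposition~\ref{prop:sym-diag} to the non-dominant indices that occur --- and confirming that the conjugation by $\Delta_1^{k-1}(\mu)$ telescopes through the product over $a$.
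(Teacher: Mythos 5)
Your proposal is correct and follows essentially the same route as the paper's proof: expand the trace in the Gelfand--Tsetlin basis via Lemma~\ref{lem:exclude} and Proposition~\ref{prop:gt-factorization}, substitute Theorem~\ref{thm:mat-elt}, recognize the operator there as $\prod_a \wD_{n-1,q^{2\bmu}}(q^{2a})^\dagger$ applied to the kernel, verify $(\zeta,k-1)$-adaptedness from the vanishing of $\Delta^{k-1}(\mu+(k-1)\one,\lambda)$ on the border strips, and sum by parts via Proposition~\ref{prop:mac-adj} so that the eigenvalue $\prod_{a,i}(y_i-q^{2a})$ cancels the denominator from Corollary~\ref{corr:denom}. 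Your identification of the summation regions and border strips matches the paper's exactly.
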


\begin{proof}
We induct on $n$. The base case is trivial because $P_\lambda(x_1; q^2, q^{2k}) = x_1^{|\lambda|}$.  For the inductive step, by Lemma \ref{lem:exclude}, it is enough to consider matrix elements for basis vectors of the form $v_{\wbbmu}$.  By Proposition \ref{prop:gt-factorization} and the inductive hypothesis, we thus have
\begin{align*}
\Tr(\wPhi_{\lambda}^n x^h) &= \sum_{\wmu^1 < \cdots < \wmu^{n - 1} < \wlambda} c(\mu^0, \mu^1) \cdots c(\mu^{n-1}, \lambda) \prod_i x_i^{(|\wmu^i| - |\wmu^{i-1}|)} \\
&= \sum_{\wmu < \wlambda} c(\mu, \lambda) x_n^{|\lambda| - |\mu| - (k-1)(n - 1)} \sum_{\wmu^1 < \cdots < \wmu^{n-2} < \wmu} c(\mu^0, \mu^1) \cdots c(\mu^{n-2}, \mu^{n-1}) \prod_{i = 1}^{n-1} x_i^{|\wmu^i| - |\wmu^{i-1}|} \\
&= \sum_{\wmu < \wlambda} c(\mu, \lambda) x_n^{|\lambda| - |\mu| - (k-1)(n - 1)} \Tr(\wPhi_\mu^{n-1} x^h) \\
&= \sum_{\wmu < \wlambda} c(\mu, \lambda) x_n^{|\lambda| - |\mu| - (k-1)(n - 1)} P_\mu(\ux; q^2, q^{2k}) \Tr(\wPhi_0^{n-1} x^h),
\end{align*}
where $\ux = (x_1, \ldots, x_{n-1})$.  By Corollary \ref{corr:denom}, we have that 
\[
\frac{\Tr(\wPhi_0^{n-1} x^h)}{\Tr(\wPhi_0^{n} x^h)} = (x_1 \cdots x_{n-1})^{k-1} x_n^{(k-1)(n-1)} \prod_{s = 1}^{k-1}\prod_{i = 1}^{n-1} (x_i - q^{2s} x_n)^{-1}.
\]
We conclude that 
\begin{align*}
\frac{\Tr(\wPhi_{\lambda}^n x^h)}{\Tr(\wPhi_0^{n} x^h)} &= (x_1 \cdots x_{n-1})^{k-1} \prod_{s = 1}^{k-1}\prod_{i = 1}^{n-1} (x_i - q^{2s} x_n)^{-1}\sum_{\wmu < \wlambda} c(\mu, \lambda) x_n^{|\lambda|} P_\mu(\ux/x_n; q^2, q^{2k})\\
&= (x_1 \cdots x_{n-1})^{k-1} \prod_{s = 1}^{k-1}\prod_{i = 1}^{n-1} (x_i - q^{2s} x_n)^{-1}\sum_{\mu = \lambda_\downarrow - (k-1)\one}^{\lambda^\uparrow} c(\mu, \lambda) x_n^{|\lambda|} P_\mu(\ux/x_n; q^2, q^{2k})\\
&= x_n^{(k-1)(n-1)}\prod_{s = 1}^{k-1}\prod_{i = 1}^{n-1} (x_i - q^{2s} x_n)^{-1}\sum_{\mu' = \lambda_\downarrow}^{\lambda^\uparrow + (k-1)\one} c(\mu' - (k-1)\one, \lambda) x_n^{|\lambda|} P_{\mu'}(\ux/x_n; q^2, q^{2k}),
\end{align*}
where $\lambda_\downarrow = (\lambda_2, \ldots, \lambda_{n})$ and $\lambda^\uparrow = (\lambda_1, \ldots, \lambda_{n-1})$ are vectors of lower and upper indices for $\mu$ so that $\sum_{\mu \prec \lambda} = \sum_{\mu = \lambda_\downarrow}^{\lambda^\uparrow}$ in the notation of (\ref{eq:summation}).  Note that $\wmu < \wlambda$ if and only if $\lambda_i \geq \mu_i \geq \lambda_{i+1} - (k - 1)$.  By the expression for $c(\mu' - (k-1)\one, \lambda)$ given in Theorem \ref{thm:mat-elt}, we obtain
\begin{align*}
P_\lambda(x; q^2, q^{2k}) = \frac{\Tr(\wPhi_{\lambda}^n x^h)}{\Tr(\wPhi_0^{n} x^h)} &=  x_n^{(k-1)(n-1)}\prod_{s = 1}^{k-1}\prod_{i = 1}^{n-1} (x_i - q^{2s} x_n)^{-1}\sum_{\mu' = \lambda_\downarrow}^{\lambda^\uparrow + (k-1)\one} x_n^{|\lambda|} P_{\mu'}(\ux/x_n; q^2, q^{2k})\\
&\phantom{===}\frac{\prod_{a = 1}^{k-1} D_{n-1, q^{2\bmu}}(q^{2a}; q^{-2}, q^{2(k-1)}) \prod_{i \leq j} [\blambda_i - \bmu_j' + k-1]_{k-1} \prod_{i < j}[\bmu_i' - \blambda_j - 1]_{k-1}}{\prod_{i \leq j} [\bmu_i' - \bmu_j' + k-1]_{k-1} \prod_{i < j}[\blambda_i - \blambda_j - 1]_{k-1}}.
\end{align*}
Define the operator 
\[
\wD_{n - 1, q^{2\bmu'}}(q^{2a}; q^2, q^{2k}) = \sum_r (-1)^{n - 1 - r} q^{2a(n - 1 - r)} \wD_{n - 1, q^{2\bmu'}}^r (q^2, q^{2k}),
\]
and note that it is diagonalized on $P_{\mu'}(\ux; q^2, q^{2k})$ by Proposition \ref{prop:sym-diag}.  Notice now that the function 
\[
\prod_{i \leq j} [\blambda_i - \bmu_j' + k-1]_{k-1} \prod_{i < j}[\bmu_i' - \blambda_j - 1]_{k-1}
\]
is $0$ for $\lambda_{i+1} - (k - 1) \leq \mu_i' < \lambda_{i+1}$ and $\lambda_i < \mu_i' \leq \lambda_i + (k - 1)$, so it is $(\lambda_\downarrow, \lambda^\uparrow, k - 1)$-adapted.  Applying Proposition \ref{prop:mac-adj} to this function yields
\begin{align*}
P_\lambda(x; q^2, q^{2k}) &=  x_n^{(k-1)(n-1)} \prod_{s = 1}^{k-1} \prod_{i = 1}^{n-1} (x_i - q^{2s} x_n)^{-1} \sum_{\mu' = \lambda_\downarrow}^{\lambda^\uparrow} x_n^{|\lambda|} \prod_{a = 1}^{k-1} \wD_{n - 1, q^{2\bmu'}}(q^{2a}; q^2, q^{2k}) P_{\mu'}(\ux/x_n; q^2, q^{2k}) \\
&\phantom{==========================}\frac{\prod_{i \geq j} [\blambda_j - \bmu_i' + k-1]_{k-1} \prod_{i < j} [\bmu_i' - \blambda_j - 1]_{k-1}}{\prod_{i \leq j} [\bmu_i' - \bmu_j' + k - 1]_{k-1} \prod_{i < j} [\blambda_i - \blambda_j - 1]_{k-1}}\\
&=  x_n^{(k-1)(n-1)} \prod_{s = 1}^{k-1} \prod_{i = 1}^{n-1} \frac{x_i / x_n - q^{2s}}{x_i - q^{2s} x_n}\sum_{\mu' = \lambda_\downarrow}^{\lambda^\uparrow} x_n^{|\lambda| - |\mu'|} P_{\mu'}(\ux; q^2, q^{2k})\\
&\phantom{==========================} \frac{\prod_{i \geq j} [\blambda_j - \bmu_i' + k-1]_{k-1} \prod_{i < j} [\bmu_i' - \blambda_j - 1]_{k-1}}{\prod_{i \leq j} [\bmu_i' - \bmu_j' + k - 1]_{k-1} \prod_{i < j} [\blambda_i - \blambda_j - 1]_{k-1}}\\
&= \sum_{\mu' \prec \lambda} x_n^{|\lambda| - |\mu'|} P_{\mu'}(\ux; q^2, q^{2k})\frac{\prod_{i \geq j} [\blambda_j - \bmu_i' + k - 1]_{k-1} \prod_{i < j} [\bmu_i' - \blambda_j - 1]_{k-1}}{\prod_{i \leq j} [\bmu_i' - \bmu_j' + k - 1]_{k-1} \prod_{i < j} [\blambda_i - \blambda_j - 1]_{k-1}},
\end{align*}
which is the desired result.
\end{proof}

\section{Specializing the expression for diagonal matrix elements} \label{sec:as}

We will prove Proposition \ref{prop:diag-coeff} using a result of \cite{AS} on reduced Clebsch-Gordan coefficients.  We normalize and translate this result to matrix elements to obtain our desired expression. We first modify the intertwiner slightly.  Consider the composition 
\[
\wPsi_\lambda: L_{\lambda + (k - 1) \wrho} \to L_{\lambda + (k - 1)\wrho} \otimes W_{k-1} \simeq L_{\lambda + (k - 1)\wrho - (k-1)\one} \otimes \Sym^{(k-1)n}\CC^n.
\]
The diagonal matrix element $c(\mu, \lambda)$ of $v_{\gt(\wmu)}$ in $\wPhi_\lambda$ is equal to the matrix element from $v_{\gt(\wmu)}$ to $v_{\gt(\wmu) - (k - 1) \one}$ in $\wPsi_\lambda$. We will compute this matrix element instead.

\subsection{The expression of \cite{AS} for reduced Clebsch-Gordan coefficients}

The $U_q(\gl_n)$-representation $L_\tau \otimes \Sym^p \CC^n$ contains each irreducible with multiplicity at most one, meaning that for any $\tau'$, there is a one-dimensional family of intertwiners
\[
L_{\tau'} \to L_\tau \otimes \Sym^p \CC^n.
\]
In \cite{AS}, a general formula for the matrix coefficients in the Gelfand-Tsetlin basis of one such map is given.  Such matrix coefficients are known as Clebsch-Gordan coefficients. 

\begin{remark}
Note that \cite{AS} uses the coproduct $\Delta_{AS} = \Delta^{21}$.  As bialgebras, $U_{q^{-1}}(\gl_n)$ equipped with $\Delta_{AS}$ and $U_q(\gl_n)$ equipped with $\Delta$ are isomorphic, so we state and apply here the formulas of \cite{AS} with $q$ and $q^{-1}$ exchanged.  
\end{remark}

Note that a Gelfand-Tsetlin basis vector $v_{\bbxi}$ for $\Sym^p\CC^n$ takes the form $\bbxi^i = (\xi^i, 0,\ldots, 0)$, so we will denote this by $\bbxi^i = \xi^i$.  For basis vectors $v_{\bbsigma} \in L_{\tau'}$ and $v_{\bbeta} \otimes v_{\bbxi} \in L_\tau \otimes \Sym^p \CC^n$, it is shown in \cite{AS} that the corresponding Clebsch-Gordan coefficient is given by a product
\[
\wC\left[\begin{matrix} \tau & p & \tau' \\ \bbeta & \bbxi & \bbsigma\end{matrix}\right] = \prod_{i = 1}^{n - 1}C\left[\begin{matrix} \bbsigma^{i+1} & \bbxi^{i+1} & \bbeta^{i+1} \\ \bbsigma^i & \bbxi^i & \bbeta^i \end{matrix}\right],
\]
where product is over reduced Clebsch-Gordan coefficients whose values are given by the following.  

\begin{theorem}[{\cite[Equation (3.4)]{AS}}] \label{thm:cg}
The reduced Clebsch-Gordan coefficient of the map 
\[
L_{\tau'} \to L_\tau \otimes \Sym^p \CC^n
\]
is given by 
\begin{align*}
C\left[\begin{matrix} \tau & p & \tau' \\ \eta & r & \eta' \end{matrix}\right] &= q^{-\frac{b}{2}} \frac{S(\eta', \eta) S(\tau, \eta) S(\tau', \tau') S(\eta, \eta)}{S(\tau', \tau) S(\tau', \eta')} [p - r]!^{1/2} \\
&\phantom{=================} \sum_{\sigma} (-1)^{|\sigma| - |\eta|} q^{(p - r + 1)(|\sigma| - |\eta|)} \frac{S(\sigma, \sigma)^2 S(\tau', \sigma)^2}{S(\sigma, \eta)^2 S(\eta', \sigma)^2 S(\tau, \sigma)^2},
\end{align*}
where the sum is over $\sigma$ of length $n - 1$ satisfying
\[
\max\{\eta_i, \tau_{i + 1}'\} \leq \sigma_i \leq \min\{\eta_i', \tau_i\}
\]
and where $b$ and $S$ are given by
\[
b = \sum_{i < j} (\tau_i' - \tau_i)(\tau_j' - \tau_j) - \sum_{i < j}(\eta_i' - \eta_i)(\eta_j' - \eta_j) + \sum_i (\eta_i' - \eta_i)(\eta_i - i + 1) - \sum_i (\tau_i' - \tau_i)(\tau_i - i + 1) + (p - r)(|\tau| - |\eta|)
\]
and
\[
S(a, b)^2 = \frac{\prod_{i \leq j} [a_i - b_j + j - i]!}{\prod_{i < j} [b_i - a_j + j - i - 1]!}.
\]
\end{theorem}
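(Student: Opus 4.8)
This statement is quoted from \cite[Equation (3.4)]{AS}, so I will only sketch how one reconstructs it. The first step is the factorization $\wC = \prod_{i=1}^{n-1} C[\cdots]$ of the full Clebsch--Gordan coefficient into single-level isoscalar factors. Since $\Sym^p\CC^n$ restricted to $U_q(\gl_{n-1})$ decomposes as $\bigoplus_{r=0}^{p}\Sym^r\CC^{n-1}$, each tensored with a one-dimensional $q^{h_n}$-weight line, the tensor product $L_\tau\otimes\Sym^p\CC^n$ is multiplicity-free as a $U_q(\gl_n)$-module; iterating down the chain $U_q(\gl_n)\supset\cdots\supset U_q(\gl_1)$ shows that the intertwiner $L_{\tau'}\to L_\tau\otimes\Sym^p\CC^n$ is determined by its value on the $\gl_1$-level, and the standard Gelfand--Tsetlin formalism then splits the coefficient into a product over levels with the $i$-th factor depending only on $\bbsigma^i,\bbxi^i,\bbeta^i$ and $\bbsigma^{i+1},\bbxi^{i+1},\bbeta^{i+1}$. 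This reduces the theorem to computing the single reduced coefficient $C\left[\begin{smallmatrix}\tau & p & \tau'\\ \eta & r & \eta'\end{smallmatrix}\right]$, the $U_q(\gl_n)\downarrow U_q(\gl_{n-1})$ isoscalar factor for the coupling $[\tau]\otimes[p,0,\dots,0]\to[\tau']$.

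To compute that isoscalar factor I would realize $\Sym^\bullet\CC^n$ as a $q$-polynomial ring $\CC[z_1,\dots,z_n]$ with $U_q(\gl_n)$ acting by $q$-difference operators and expand the $\gl_{n-1}$-highest-weight vector of the embedded copy of $L_{\tau'}$ as $\sum_{\sigma}c_\sigma\,u_\sigma\otimes w_\sigma$, where $u_\sigma\in L_\tau$ ranges over the $\gl_{n-1}$-highest-weight vectors of weight $\sigma$ and $w_\sigma\in\Sym^p\CC^n$ is the matching weight vector; compatibility of $\sigma$ with both $\tau$ and $\tau'$ forces exactly the range $\max\{\eta_i,\tau'_{i+1}\}\le\sigma_i\le\min\{\eta'_i,\tau_i\}$ appearing in the statement. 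Imposing annihilation by $e_1,\dots,e_{n-2}$ and tracking the action of $f_{n-1}$ yields a first-order recursion in $p$ for the $c_\sigma$ whose coefficients are ratios of $q$-numbers built from $\gl_2$ Wigner coefficients along the edges of the pattern; solving this recursion by induction on $p$ produces the displayed sum, the blocks $S(a,b)^2$ arising as the products of $q$-factorials that normalize Gelfand--Tsetlin vectors and $q^{-b/2}$ recording the accumulated $q$-phase. A cleaner alternative is to \emph{verify} the stated formula directly, by checking that it satisfies the orthogonality relations for coupling coefficients together with the recursion obtained from one application of $f_{n-1}$; this reduces the theorem to a terminating basic hypergeometric summation of $q$-Pfaff--Saalsch\"utz or $q$-Vandermonde type.

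The remaining step is to fix the overall constant. Evaluating both sides at an extremal pattern --- for instance one that forces $\sigma_i=\min\{\eta'_i,\tau_i\}$, so that the $\sigma$-sum collapses to a single term --- reduces the identity to a product of $q$-factorials and pins down the prefactor, including the half-integer powers of $q$ packaged into $b$ and the square roots $S(\cdot,\cdot)^{1/2}$, whose consistency follows from the symmetry relations among $q$-factorials. I expect the main obstacle to be not the representation theory, which is routine, but the bookkeeping: keeping exact track of the $q$-powers inside $b$ and of the square-root factors while running the recursion, and matching the resulting intermediate sum with the stated closed form --- precisely the delicate computation carried out in \cite{AS}, which is why we cite rather than reprove it here.
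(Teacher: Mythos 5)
The paper offers no proof of this statement---it is imported verbatim (up to the $q \leftrightarrow q^{-1}$ convention swap noted in the adjacent remark about coproducts) from \cite[Equation (3.4)]{AS}---so your decision to cite rather than reprove matches the paper's approach exactly. Your sketch of how the derivation in \cite{AS} would go (Gelfand--Tsetlin factorization into reduced coefficients, the interlacing origin of the summation range for $\sigma$, a recursion resolved by a terminating $q$-hypergeometric summation, and fixing the normalization at an extremal pattern) is a plausible reconstruction, though it is a strategy outline rather than a verified proof, which is consistent with the paper's treatment of this result as an external input.
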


\subsection{Specializing the reduced Clebsch-Gordan coefficient}

We restrict now to our case of $\wPsi_\lambda$.  The relevant parameters are 
\[
\tau' = \wtilde{\lambda} \qquad \tau = \wtilde{\lambda} - (k - 1)\one \qquad \eta' = \wtilde{\mu} \qquad \eta = \wtilde{\mu} - (k - 1)\one \qquad p = n(k - 1) \qquad r = (n - 1)(k-1).
\]
In this case, we see that 
\begin{align*}
b &= \frac{n(n - 1)}{2} (k - 1)^2 - \frac{(n - 1)(n - 2)}{2} (k - 1)^2 + (k - 1)|\wtilde{\mu}|\\
&\phantom{====} - \frac{(n - 1)(n - 2)}{2} (k - 1) - (k - 1)|\wtilde{\lambda}| + \frac{n(n - 1)}{2}(k - 1) + (k - 1)(|\wtilde{\lambda}| - |\wtilde{\mu}|) \\
&= (n - 1)(k - 1)^2 + (n - 1)(k - 1)\\
&= (n - 1) k (k - 1).
\end{align*}
Further, the constraint on $\sigma$ takes the form 
\[
\max\{\mu_i - (k - 1) - (k - 1)i, \lambda_{i + 1} - (k - 1)(i + 1)\} \leq \sigma_i \leq \min\{\mu_i - (k - 1)i, \lambda_i - (k - 1) - (k - 1)i\},
\]
so if $\sigma = \nu + (k - 1)\wrho$, we have 
\[
\max\{\mu_i - (k - 1), \lambda_{i + 1} - (k - 1)\} \leq \nu_i \leq \min\{\mu_i, \lambda_i - (k - 1)\}.
\]
Translating and canceling a factor, we have that 
\begin{align*}
C\left[\begin{matrix} \wtilde{\lambda} - (k - 1)\one & n(k - 1) & \wtilde{\lambda} \\ \wtilde{\mu} - (k - 1)\one & (n - 1)(k - 1) & \wtilde{\mu} \end{matrix}\right] &= q^{-\frac{(n-1)k (k - 1)}{2}} [k - 1]!^{1/2} \frac{S(\wmu, \wmu - (k - 1)\one) S(\wlambda, \wlambda) S(\wmu, \wmu)}{S(\wlambda, \wlambda - (k - 1)\one)}\\
&\phantom{===} \sum_{\nu} (-1)^{|\nu| - |\mu|} q^{k(|\nu| - |\mu|)} \frac{S(\wnu, \wnu)^2 S(\wlambda, \wnu)^2}{S(\wnu, \wmu - (k - 1)\one)^2 S(\wmu, \wnu)^2 S(\wlambda - (k - 1)\one, \wnu)^2}.
\end{align*}
Denote the latter sum by $\Sigma(\mu, \lambda)$ and the prefactor by $B(\mu, \lambda)$.

\subsection{Computing the normalization factor}

Write $\underline{\lambda}$ for the truncation of $\lambda$, and note that both interpretations of $\underline{\wtilde{\lambda}}$ are equal. Further, denote by $\sgt(r)$ the pattern
\[
\{(k-1) \prec 2(k-1) \prec \cdots \prec r(k-1)\},
\]
where for $1 \leq i \leq r$, $i(k - 1)$ is identified with the length $i$ signature $(i(k -1), 0, \ldots, 0)$; note that $v_{\sgt(n)}$ has weight $(k - 1)\one$ in $\Sym^{n(k - 1)}\CC^n$.

We now consider the special case where $\mu = \underline{\lambda}$, which will allow us to translate between normalization factors for the Clebsch-Gordan coefficients.  In this case, the constraint on $\nu$ implies the sum is over the single term $\nu = \mu - (k - 1)\one = \underline{\lambda} - (k - 1)\one$, and the matrix coefficient is
\begin{align*}
C&\left[\begin{matrix} \wtilde{\lambda} - (k - 1)\one & n(k - 1) & \wtilde{\lambda} \\ \wtilde{\underline{\lambda}} - (k - 1)\one & (n - 1)(k - 1) & \wtilde{\underline{\lambda}} \end{matrix}\right]\\
 &\phantom{====}= q^{-\frac{(n-1)k(k-1)}{2}} [k - 1]!^{1/2} (-1)^{(n - 1)(k - 1)} q^{-k (k-1)(n-1)} \\
&\phantom{=========} \frac{S(\wulambda, \wulambda - (k - 1) \one) S(\wlambda, \wlambda) S(\wulambda, \wulambda) S(\wulambda, \wulambda)^2 S(\wlambda, \wulambda - (k - 1) \one)^2}{S(\wlambda, \wlambda - (k - 1)\one) S(\wulambda, \wulambda)^2 S(\wulambda, \wulambda - (k - 1)\one)^2 S(\wlambda - (k - 1)\one, \wulambda - (k - 1)\one)^2}\\
&\phantom{====}= (-1)^{(n - 1)(k - 1)} q^{-\frac{3(n-1)k (k - 1)}{2}} [k - 1]!^{1/2} \frac{S(\wlambda, \wlambda) S(\wulambda, \wulambda)S(\wlambda, \wulambda - (k - 1) \one)^2}{S(\wlambda, \wlambda - (k - 1)\one) S(\wulambda, \wulambda - (k - 1)\one) S(\wlambda, \wulambda)^2}.
\end{align*}
Notice now that 
\begin{align*}
\left(\frac{S(\wlambda, \wtau)S(\wulambda, \wutau)}{S(\wlambda, \wutau)^2}\right)^2 &= \prod_{1 \leq i \leq j \leq n - 1} \frac{[\blambda_i - \btau_j]!^2}{[\blambda_i - \btau_j]!^2} \prod_{1 \leq i < j \leq n - 1} \frac{[\btau_i - \blambda_j  - 1]!^2}{[\btau_i - \blambda_j - 1]!^2} \prod_{1 \leq i \leq n} [\blambda_i - \btau_n]! \prod_{1 \leq i < n} \frac{[\btau_i - \blambda_n - 1]!^2}{[\btau_i - \blambda_n - 1]!}\\
&= [\blambda_n - \btau_n]! \prod_{i = 1}^{n - 1} [\blambda_i - \btau_n]! [\btau_i - \blambda_n - 1]!.
\end{align*}
Applying this twice, we conclude that 
\begin{align*}
C\left[\begin{matrix} \wtilde{\lambda} - (k - 1)\one & n(k - 1) & \wtilde{\lambda} \\ \wtilde{\underline{\lambda}} - (k - 1)\one & (n - 1)(k - 1) & \wtilde{\underline{\lambda}} \end{matrix}\right]&=(-1)^{(n - 1)(k - 1)} q^{-\frac{3(n - 1)k (k - 1)}{2}} \left(\prod_{i = 1}^{n - 1} \frac{[\blambda_i - \blambda_n]![\blambda_i - \blambda_n - 1]!}{[\blambda_i - \blambda_n + (k - 1)]! [\blambda_i - \blambda_n - k]!}\right)^{1/2}\\
&=(-1)^{(n - 1)(k - 1)} q^{-\frac{3(n - 1)k (k - 1)}{2}} \left(\prod_{i = 1}^{n - 1} \frac{[\blambda_i - \blambda_n - 1]_{k-1}}{[\blambda_i - \blambda_n + (k - 1)]_{k - 1}}\right)^{1/2}.
\end{align*}
Iterating this, we find that the diagonal Clebsch-Gordan coefficient of the highest weight vector is 
\begin{align*}
\wC&\left[\begin{matrix} \wlambda - (k - 1)\one & n(k-1) & \wlambda \\ \gt(\underline{\wlambda} - (k - 1)\one) & \sgt(n-1) &  \gt(\underline{\wlambda})\end{matrix}\right] = (-1)^{\frac{n(n - 1)(k-1)}{2}} q^{-\frac{3 n (n - 1)k (k - 1)}{4}} \left(\prod_{i < j} \frac{[\blambda_i - \blambda_j - 1]_{k-1}}{[\blambda_i - \blambda_j + (k - 1)]_{k - 1}}\right)^{1/2},
\end{align*}
where we recall that $\gt$ was defined in (\ref{eq:gt-def}).

\subsection{Proof of Proposition \ref{prop:diag-coeff}}

We now put everything together to prove Proposition \ref{prop:diag-coeff}.  The diagonal Clebsch-Gordan coefficient of each highest weight vector for $U_q(\gl_{n-1})$ in the Gelfand-Tsetlin basis is
\begin{align*}
\wC&\left[\begin{matrix} \wlambda - (k - 1)\one & n(k - 1) & \wlambda \\ \gt(\wmu - (k - 1)\one) & \sgt(n-1) & \gt(\wmu)\end{matrix}\right]\\
&\phantom{=======}= C\left[\begin{matrix} \wtilde{\lambda} - (k - 1)\one & n(k - 1) & \wtilde{\lambda} \\ \wtilde{\mu} - (k - 1)\one & (n - 1)(k - 1) & \wtilde{\mu} \end{matrix}\right] \wC\left[\begin{matrix} \wmu - (k - 1)\one & (n - 1)(k - 1) & \wmu \\ \gt(\underline{\wmu} - (k - 1)) & \sgt(n-2) & \gt(\underline{\wmu})\end{matrix}\right]\\
&\phantom{=======}= B(\mu, \lambda)\Sigma(\mu, \lambda)  (-1)^{\frac{(n-1)(n - 2)(k-1)}{2}} q^{-\frac{3 (n-1) (n - 2)k (k - 1)}{4}} \prod_{i < j} \frac{[\bmu_i - \bmu_j - 1]_{k-1}^{1/2}}{[\bmu_i - \bmu_j + (k - 1)]_{k - 1}^{1/2}}.
\end{align*}
In terms of $\Delta^{k-1}_1$ and $\Delta^{k-1}_2$ from (\ref{eq:delta}), the matrix element of $\wPsi_\lambda$ and hence $\wPhi_\lambda$ we are interested in is 
\begin{align*}
c(\mu, \lambda) &=\wC\left[\begin{matrix} \wlambda - (k - 1)\one & n(k - 1) & \wlambda \\ \gt(\wmu - (k - 1)) & \sgt(n-1) & \gt(\wmu)\end{matrix}\right] \wC\left[\begin{matrix} \wlambda - (k - 1)\one & n(k-1) & \wlambda \\ \gt(\wlambda - (k - 1)\one) &  \sgt(n-1) & \gt(\wlambda)\end{matrix}\right]^{-1}\\
&= B(\mu, \lambda)\Sigma(\mu, \lambda) (-1)^{(n-1)(k-1)} q^{\frac{3(n-1)k(k-1)}{2}} \frac{\Delta^{k-1}_1(\lambda)^{1/2} \Delta^{k-1}_2(\mu)^{1/2}}{\Delta^{k-1}_2(\lambda)^{1/2} \Delta^{k-1}_1(\mu)^{1/2}}\\
&= (-1)^{(n - 1)(k-1)} q^{(n - 1)k(k-1)} [k - 1]!^{1/2}\frac{\Delta^{k-1}_1(\lambda)^{1/2} \Delta^{k-1}_2(\mu)^{1/2}}{\Delta^{k-1}_2(\lambda)^{1/2} \Delta^{k-1}_1(\mu)^{1/2}}
 \frac{S(\wmu, \wmu - (k - 1)\one) S(\wlambda, \wlambda) S(\wmu, \wmu)}{S(\wlambda, \wlambda - (k - 1)\one)} \Sigma(\mu, \lambda),
\end{align*}
where
\[
\Sigma(\mu, \lambda) = \sum_\nu (-1)^{|\nu| - |\mu|} q^{k(|\nu| - |\mu|)} X(\nu, \mu, \lambda)
\]
with 
\[
X(\nu, \mu, \lambda) =  \frac{S(\wnu, \wnu)^2 S(\wlambda, \wnu)^2}{S(\wnu, \wmu - (k - 1)\one)^2 S(\wmu, \wnu)^2 S(\wlambda - (k - 1)\one, \wnu)^2}.
\]
Observe that 
\begin{align*}
\frac{S(\wlambda, \wlambda)}{S(\wlambda, \wlambda - (k - 1)\one)} &= \left(\prod_{i \leq j} \frac{[\blambda_i - \blambda_j]!}{[\blambda_i - \blambda_j + (k - 1)]!} \prod_{i < j} \frac{[\blambda_i - \blambda_j - k]!}{[\blambda_i - \blambda_j - 1]!}\right)^{1/2}\\
&= [k - 1]!^{-n/2} \Delta^{k-1}_1(\lambda)^{-1/2} \Delta_2^{k-1}(\lambda)^{-1/2}
\end{align*}
and 
\begin{align*}
S(\wmu, \wmu - (k - 1)\one)S(\wmu, \wmu) &= \left(\frac{\prod_{i \leq j}[\bmu_i - \bmu_j + (k - 1)]! [\bmu_i - \bmu_j]!}{\prod_{i < j}[\bmu_i - \bmu_j - k]! [\bmu_i - \bmu_j - 1]!}\right)^{1/2}\\
&= [k - 1]!^{\frac{n - 1}{2}}\prod_{i < j} [\bmu_i - \bmu_j]  \Delta^{k-1}_1(\mu)^{1/2}\Delta^{k-1}_2(\mu)^{1/2}.
\end{align*}
We conclude that 
\begin{align*}
c(\mu, \lambda) &= (-1)^{(n - 1)(k-1)} q^{(n - 1)k(k - 1)}\prod_{i < j} [\bmu_i - \bmu_j]\frac{\Delta_2^{k-1}(\mu)}{\Delta^{k-1}_2(\lambda)} \Sigma(\mu, \lambda).
\end{align*}
We now notice that 
\begin{align*}
S(\wnu, \wmu - (k - 1)\one)^2 S(\wmu, \wnu)^2 &= \frac{\prod_{i \leq j} [\bnu_i - \bmu_j + (k - 1)]! [\bmu_i - \bnu_j]!}{\prod_{i < j} [\bmu_i - \bnu_j - k]! [\bnu_i - \bmu_j - 1]!}\\
&= \prod_i [\bnu_i - \bmu_i]![\bmu_i - \bnu_i]! \prod_{i < j} [\bnu_i - \bmu_j + (k - 1)]_k [\bmu_i - \bnu_j]_k
\end{align*}
and that 
\begin{align*}
\frac{S(\wlambda, \wnu)^2}{S(\wlambda - (k - 1)\one, \wnu)^2} &= \prod_{i \leq j} \frac{[\blambda_i - \bnu_j]!}{[\blambda_i - \bnu_j - (k - 1)]!} \prod_{i < j} \frac{[\bnu_i - \blambda_j + k - 2]!}{[\bnu_i - \blambda_j - 1]!} \\
&= \prod_i [\blambda_i - \bnu_i]_{k-1} \prod_{i < j} [\blambda_i - \bnu_j]_{k-1} [\bnu_i - \blambda_j + (k - 2)]_{k-1}.
\end{align*}
We conclude that 
\begin{align*}
X(\nu, \mu, \lambda) &= \frac{S(\wnu, \wnu)^2 S(\wlambda, \wnu)^2}{S(\wnu, \wmu - (k - 1)\one)^2 S(\wmu, \wnu)^2 S(\wlambda - (k - 1)\one, \wnu)^2}\\
&= \prod_{i < j} [\bnu_i - \bnu_j] \prod_i \frac{[\blambda_i - \bnu_i]_{k-1}}{[\bnu_i - \bmu_i]![\bmu_i - \bnu_i]!} \prod_{i < j} \frac{[\blambda_i - \bnu_j]_{k-1} [\bnu_i - \blambda_j + (k - 2)]_{k-1}}{[\bnu_i - \bmu_j + (k - 1)]_k [\bmu_i - \bnu_j]_k}.
\end{align*}
Putting everything together, the expression we obtain for the diagonal matrix element is 
\begin{align*}
c(\mu, \lambda) &= (-1)^{(n - 1)(k-1)} q^{(n - 1)k(k - 1)} \prod_{i < j} [\bmu_i - \bmu_j]\frac{\Delta_2^{k-1}(\mu)}{\Delta^{k-1}_2(\lambda)} \sum_{\nu = \mu - (k - 1)\one}^\mu (-1)^{|\nu| - |\mu|} q^{k(|\nu| - |\mu|)}\\
&\phantom{===}  \prod_{i < j} [\bnu_i - \bnu_j] \prod_i \frac{[\blambda_i - \bnu_i]_{k-1}}{[\bnu_i - \bmu_i]![\bmu_i - \bnu_i]!}\prod_{i < j} \frac{[\blambda_i - \bnu_j]_{k-1} [\bnu_i - \blambda_j + (k - 2)]_{k-1}}{[\bnu_i - \bmu_j + (k - 1)]_k [\bmu_i - \bnu_j]_k}.
\end{align*}
In terms of $\mu'$ and $\nu'$, this is the desired
\begin{align*}
c(\mu, \lambda) &= (-1)^{(n - 1)(k-1)} q^{(n - 1)k(k - 1)} \prod_{i < j} [\bmu_i' - \bmu_j']\frac{\Delta_2^{k-1}(\mu')}{\Delta^{k-1}_2(\lambda)} \sum_{\nu' = \mu' - (k - 1)\one}^{\mu'} (-1)^{|\nu'| - |\mu'|} q^{k(|\nu'| - |\mu'|)}\\
&\phantom{===}  \prod_{i < j} [\bnu_i' - \bnu_j' + k (j - i)] \prod_i \frac{[\blambda_i - \bnu_i' + (k - 1)]_{k-1}}{[\bnu_i' - \bmu_i' + (k - 1)]![\bmu_i' - \bnu_i']!} \prod_{i < j} \frac{[\blambda_i - \bnu_j' + (k - 1)]_{k-1} [\bnu_i' - \blambda_j - 1]_{k-1}}{[\bnu_i' - \bmu_j' + (k - 1)]_k [\bmu_i' - \bnu_j']_k} \\
&=  \frac{(-1)^{(n - 1)(k-1)} q^{(n - 1)k(k - 1)}}{\Delta_2^{k-1}(\lambda) \Delta_1^{k-1}(\mu')} \sum_{\nu' = \mu' - (k - 1)\one}^{\mu'} (-1)^{|\nu'| - |\mu'|} q^{k(|\nu'| - |\mu'|)} \prod_i \frac{1}{[\bnu_i' - \bmu_i' + (k - 1)]![\bmu_i' - \bnu_i']!}\\
&\phantom{===} \frac{\prod_{i < j}  [\bmu_i' - \bmu_j' + k-1]_{2k-1}  \prod_{i < j} [\bnu_i' - \bnu_j']}{\prod_{i < j}[\bnu_i' - \bmu_j' + (k - 1)]_k [\bmu_i' - \bnu_j']_k} \prod_{i \leq  j} [\blambda_i - \bnu_j' + (k - 1)]_{k-1} \prod_{i < j} [\bnu_i' - \blambda_j - 1]_{k-1}.
\end{align*}

\bibliographystyle{alpha}
\bibliography{mac-bib}

\def\cprime{$'$} \def\cprime{$'$}
\begin{thebibliography}{FJMM03}

\bibitem[AFS12]{AFS}
H.~Awata, B.~Feigin, and J.~Shiraishi.
\newblock Quantum algebraic approach to refined topological vertex.
\newblock {\em J. High Energy Phys.}, (3):041, front matter+34, 2012.

\bibitem[AS94]{AS}
S.~Ali{\v{s}}auskas and Y.~F. Smirnov.
\newblock Multiplicity-free {${\rm u}_q(n)$} coupling coefficients.
\newblock {\em J. Phys. A}, 27(17):5925--5939, 1994.

\bibitem[BG13]{BG}
A.~Borodin and V.~Gorin.
\newblock General $\beta$ {J}acobi corners process and the {G}aussian free
  field.
\newblock {\em Comm. Pure Appl. Math.}, 2013.
\newblock to appear, \url{http://arxiv.org/abs/1305.3627}.

\bibitem[BS13]{BS}
I.~Burban and O.~Schiffmann.
\newblock The composition {H}all algebra of a weighted projective line.
\newblock {\em J. Reine Angew. Math.}, 679:75--124, 2013.

\bibitem[CEE09]{CEE}
D.~Calaque, B.~Enriquez, and P.~Etingof.
\newblock Universal {KZB} equations: the elliptic case.
\newblock In {\em Algebra, arithmetic, and geometry: in honor of {Y}u. {I}.
  {M}anin. {V}ol. {I}}, volume 269 of {\em Progr. Math.}, pages 165--266.
  Birkh\"auser Boston, Inc., Boston, MA, 2009.

\bibitem[Che95]{Che2}
I.~Cherednik.
\newblock Double affine {H}ecke algebras and {M}acdonald's conjectures.
\newblock {\em Ann. of Math. (2)}, 141(1):191--216, 1995.

\bibitem[Che05]{Che}
I.~Cherednik.
\newblock {\em Double affine {H}ecke algebras}, volume 319 of {\em London
  Mathematical Society Lecture Note Series}.
\newblock Cambridge University Press, Cambridge, 2005.

\bibitem[Dun05]{Dun}
C.~F. Dunkl.
\newblock Singular polynomials and modules for the symmetric groups.
\newblock {\em Int. Math. Res. Not.}, (39):2409--2436, 2005.

\bibitem[EK94]{EK}
P.~Etingof and A.~Kirillov, Jr.
\newblock Macdonald's polynomials and representations of quantum groups.
\newblock {\em Math. Res. Lett.}, 1(3):279--296, 1994.

\bibitem[EK96]{EK2}
P.~I. Etingof and A.~Kirillov, Jr.
\newblock Representation-theoretic proof of the inner product and symmetry
  identities for {M}acdonald's polynomials.
\newblock {\em Compos. Math.}, 102(2):179--202, 1996.

\bibitem[Eno09]{Eno}
N.~Enomoto.
\newblock Composition factors of polynomial representation of {DAHA} and
  {$q$}-decomposition numbers.
\newblock {\em J. Math. Kyoto Univ.}, 49(3):441--473, 2009.

\bibitem[ES98]{ES2}
P.~Etingof and K.~Styrkas.
\newblock Algebraic integrability of {M}acdonald operators and representations
  of quantum groups.
\newblock {\em Compos. Math.}, 114(2):125--152, 1998.

\bibitem[ES09]{ES}
P.~Etingof and E.~Stoica.
\newblock Unitary representations of rational {C}herednik algebras.
\newblock {\em Represent. Theory}, 13:349--370, 2009.
\newblock With an appendix by Stephen Griffeth.

\bibitem[Eti95]{Eti2}
P.~I. Etingof.
\newblock Quantum integrable systems and representations of {L}ie algebras.
\newblock {\em J. Math. Phys.}, 36(6):2636--2651, 1995.

\bibitem[FJMM02]{FJMM}
B.~Feigin, M.~Jimbo, T.~Miwa, and E.~Mukhin.
\newblock A differential ideal of symmetric polynomials spanned by {J}ack
  polynomials at {$\beta=-(r-1)/(k+1)$}.
\newblock {\em Int. Math. Res. Not.}, (23):1223--1237, 2002.

\bibitem[FJMM03]{FJMM2}
B.~Feigin, M.~Jimbo, T.~Miwa, and E.~Mukhin.
\newblock Symmetric polynomials vanishing on the shifted diagonals and
  {M}acdonald polynomials.
\newblock {\em Int. Math. Res. Not.}, (18):1015--1034, 2003.

\bibitem[GEL13]{EGL}
E.~Gorsky, P.~Etingof, and I.~Losev.
\newblock Representations of rational {C}herednik algebras with minimal support
  and torus knots, 2013.
\newblock \url{http://arxiv.org/abs/1304.3412}.

\bibitem[Hec87]{He}
G.~Heckman.
\newblock Root systems and hypergeometric functions. {II}.
\newblock {\em Compos. Math.}, 64(3):353--373, 1987.

\bibitem[HO87]{HO}
G.~Heckman and E.~Opdam.
\newblock Root systems and hypergeometric functions. {I}.
\newblock {\em Compos. Math.}, 64(3):329--352, 1987.

\bibitem[Kas05]{Kas}
M.~Kasatani.
\newblock Subrepresentations in the polynomial representation of the double
  affine {H}ecke algebra of type {${\rm GL}_n$} at {$t^{k+1}q^{r-1}=1$}.
\newblock {\em Int. Math. Res. Not.}, (28):1717--1742, 2005.

\bibitem[Mac95]{Mac}
I.~G. Macdonald.
\newblock {\em Symmetric functions and {H}all polynomials}.
\newblock Oxford Mathematical Monographs. The Clarendon Press, Oxford
  University Press, New York, second edition, 1995.
\newblock With contributions by A. Zelevinsky, Oxford Science Publications.

\bibitem[Neg13]{Neg}
A.~Negut.
\newblock Operators on symmetric polynomials, 2013.
\newblock \url{http://arxiv.org/abs/1310.3515}.

\bibitem[Opd88a]{Opd}
E.~Opdam.
\newblock Root systems and hypergeometric functions. {III}.
\newblock {\em Compos. Math.}, 67(1):21--49, 1988.

\bibitem[Opd88b]{Opd2}
E.~Opdam.
\newblock Root systems and hypergeometric functions. {IV}.
\newblock {\em Compos. Math.}, 67(2):191--209, 1988.

\bibitem[Sun14]{Sun}
Y.~Sun.
\newblock A new integral formula for {H}eckman-{O}pdam hypergeometric
  functions, 2014.
\newblock \url{http://arxiv.org/abs/1406.3772}.

\bibitem[SV11]{SV}
O.~Schiffmann and E.~Vasserot.
\newblock The elliptic {H}all algebra, {C}herednik {H}ecke algebras and
  {M}acdonald polynomials.
\newblock {\em Compos. Math.}, 147(1):188--234, 2011.

\bibitem[SV13]{SV2}
O.~Schiffmann and E.~Vasserot.
\newblock The elliptic {H}all algebra and the {$K$}-theory of the {H}ilbert
  scheme of {$\Bbb A^2$}.
\newblock {\em Duke Math. J.}, 162(2):279--366, 2013.

\bibitem[UTS90]{UTS}
K.~Ueno, T.~Takebayashi, and Y.~Shibukawa.
\newblock Construction of {G}el\cprime fand-{T}setlin basis for {$U_q({\rm
  gl}(N+1))$}-modules.
\newblock {\em Publ. Res. Inst. Math. Sci.}, 26(4):667--679, 1990.

\bibitem[Ven14]{Ven}
V.~Venkateswaran.
\newblock On the expansion of certain vector-valued characters of ${U}_q({\rm
  gl}_n)$ with respect to the {G}elfand-{T}setlin basis, 2014.
\newblock \url{http://arxiv.org/abs/1409.4079}.

\end{thebibliography}

\end{document}